\numberwithin{equation}{section}
\newtheorem{thm}{Theorem}[section]
\newtheorem{lem}[thm]{Lemma}
\newtheorem{prop}[thm]{Proposition}
\theoremstyle{definition}
\newtheorem{rem}[thm]{Remark}
\newtheorem{exa}[thm]{Example}
\newtheorem{defin}[thm]{Definition}
\def\ca{{\mathcal A}}
\def\cb{{\mathcal B}}
\def\ce{{\mathcal E}}
\def\cf{{\mathcal F}}
\def\ch{{\mathcal H}}
\def\ck{{\mathcal K}}
\def\cl{{\mathcal L}}
\def\cn{{\mathcal N}}
\def\cp{{\mathcal P}}
\def\cs{{\mathcal S}}
\def\cu{{\mathcal U}}
\def\cz{{\mathcal Z}}
\def\ga{{\mathfrak A}}
\def\gb{{\mathfrak B}}
\def\gz{{\mathfrak Z}}
\def\bc{{\mathbb C}}
\def\bm{{\mathbb M}}
\def\bn{{\mathbb N}}
\def\bp{{\mathbb P}}
\def\bz{{\mathbb Z}}
\def\a{\alpha}
\def\eeps{\epsilon}
\def\eps{\varepsilon}
\def\m{\mu}
\def\r{\rho}
\def\s{\sigma} 
\def\f{\varphi}  
\def\th{\theta} 
\def\om{\omega}
\def\id{\hbox{id}}
\def\di{{\rm d}}
\def\id{{\rm id}}
\def\ad{\mathop{\rm ad}}
\DeclareMathAlphabet{\mathpzc}{OT1}{pzc}{m}{it}
\begin{document}

\title[de Finetti-type theorems]
{de Finetti-type theorems on quasi-local algebras and infinite Fermi tensor products}
\author{Vitonofrio Crismale}
\address{Vitonofrio Crismale\\
Dipartimento di Matematica\\
Universit\`{a} degli studi di Bari\\
Via E. Orabona, 4, 70125 Bari, Italy}
\email{\texttt{vitonofrio.crismale@uniba.it}}
\author{Stefano Rossi}
\address{Stefano Rossi\\
Dipartimento di Matematica\\
Universit\`{a} degli studi di Bari\\
Via della E. Orabona, 4, 70125 Bari, Italy} \email{{\tt
stefano.rossi@uniba.it}}
\author{Paola Zurlo}
\address{Paola Zurlo\\
Dipartimento di Matematica\\
Universit\`{a} degli studi di Bari\\
Via E. Orabona, 4, 70125 Bari, Italy}
\email{\texttt{paola.zurlo@uniba.it}}
%\date{\today}

\begin{abstract}
Local actions of $\bp_\bn$, the group of finite permutations on $\bn$, on quasi-local
algebras are defined and proved to be $\bp_\bn$-abelian.
It turns out that invariant states under local actions are
automatically even, and extreme invariant states are strongly clustering.
Tail algebras of invariant states are shown to obey 
a form of the Hewitt and Savage theorem, in that they coincide with the 
fixed-point von Neumann algebra.
Infinite graded tensor products of $C^*$-algebras, which  include the CAR algebra,
are then addressed as particular examples of quasi-local algebras 
acted upon $\bp_\bn$ in
a natural way. Extreme invariant states are characterized as infinite products of a single even state, and
a de Finetti theorem is established.
Finally, infinite products of factorial even states are shown to be factorial
by applying a twisted version of  the tensor product commutation theorem, which is also derived here.

\vskip0.1cm\noindent \\
{\bf Mathematics Subject Classification}:  46L06, 60G09, 60F20, 46L53.\\
{\bf Key words}: quasi-local algebras, infinite $\bz_2$-graded tensor products, tail algebras, de Finetti's theorems, product states.
\end{abstract}

\maketitle
%\tableofcontents
\section{Introduction}\label{sec1}

Distributional symmetries for families of random variables concern  invariance of any finite joint distribution of them under some measurable transformations. For their importance in probability theory,  invariance under shifts, finite permutations or rotations are certainly worth mentioning. In these cases the random variables are respectively named stationary, exchangeable or rotatable, and the reader is referred to \cite{Ka} for an extensive account of the subject in the setting of commutative probability spaces.
The investigation of distributional symmetries was initiated by de Finetti's celebrated theorem, which shows that  sequences of two-point valued exchangeable random variables are conditionally independent and identically distributed. Phrased differently, any finite joint distribution of them is obtained by randomization of the binomial distribution. This result has since
 found several generalizations. To name but one of these, the probability measures on the Tychonov product of compact Hausdorff spaces which are invariant under the action of finite permutations are in fact mixtures of product measures, as
proved by Hewitt and Savage in \cite{HS}.  

Now the $C^*$-algebraic counterpart of Tychonov products is provided by the theory of tensor products of
$C^*$-algebras. Therefore, it is no wonder that the earliest non-commutative  settings for the generalizations of de Finetti's theorem
came from the infinite tensor products of a given unital $C^*$-algebra. 
In \cite{St}, St\o rmer  carried out a thorough analysis of all permutation-invariant states of the (minimal) infinite
tensor product  $\otimes^{\bn}\ga$ of an assigned $C^*$-algebra $\ga$.
Among the main results obtained in that paper, it is worthwhile to mention that the extreme points of the (weakly-$*$) compact convex set of such states 
 may be identified with infinite product states of a single state on $\ga$. Furthermore, the convex set in question is actually
a Choquet simplex, which allows for a decomposition of any invariant state into an integral of extreme invariant states with respect to
a unique barycentric measure.
To our knowledge, though, it was not until the early $90$s
that this line of research got a new lease of life, when  far more emphasis was laid on
the probabilistic interpretation.  In this respect, Accardi and Lu
proved a general non-commutative version of the Hewitt and Savage theorem,  \cite{AL}.
In a later paper, \cite{ABCL}, connections between exchangeability and singleton
conditions were also established.
Not long after, K\"{o}stler obtained a non-commutative de Finetti theorem within
the formalism of von Neumann algebras in \cite{K}, where exchangeability
is seen to imply independence with respect to the tail algebra, although the converse
may fail to hold, as remarked by the author himself.  Finally, also motivated by the key role played in physics by the 
canonical anti-commutation rules, Crismale and Fidaleo provided a version of the theorem for states right on the
CAR algebra, \cite{CFCMP}.
Although the CAR algebra is isomorphic with the UHF algebra of type $2^\infty$ and is thus an infinite
tensor product of  $\bm_2(\bc)$ with itself, the de Finetti theorem proved in the last mentioned paper cannot
be reached  by an application of the results in \cite{St}, not least because the action of the permutations is not the same
as the one considered by St\o rmer.
In fact, the results obtained there  take into account the canonical $\bz_2$-grading of
the CAR algebra as well. In particular, any symmetric state turns out to be even, namely grading-invariant.
Furthermore, extreme symmetric states feature the same properties as in the work of Stormer. The 
novelty, however, is that the product must be intended in the sense of Araki and Moriya, \cite{AM1}, and the factor
state must be an even state on $\bm_2(\bc)$, thought of as a graded $C^*$-algebra with even (odd) part
given by diagonal  (anti-diagonal) matrices, for the product state to even make sense. Unlike  what happens with usual
tensor products, the action of $\bp_\bn$, the group of finite permutations on $\bn$, on the CAR algebra is no longer asymptotically abelian.
Nevertheless,  the corresponding $C^*$-dynamical system is $\bp_\bn$-abelian,  see \cite{Sak} for the definition.
It is ultimately this circumstance which  guarantees that the set of symmetric states is still a Choquet simplex.

This paper in part aims to resume the analysis carried out in \cite{CFCMP} in order to frame it in the broader scope
of quasi-local algebras, the interest in which is undoubtedly justified by the many appearences they make in 
quantum field theory and statistical mechanics. 
In the present work, however, quasi-local algebras are mainly thought of as a source of examples of $\bz_2$-graded $C^*$-algebras.
In particular, in Section \ref{quasilocal} we first single out  actions of $\bp_\bn$ which are fully
compatible with the local structure of the algebras addressed, see Definition \ref{locact}.
More in detail, Proposition \ref{symmquasi} shows that any such action is $\bp_\bn$-abelian. 
Moreover, its invariant states are automatically even, with extreme states
being weakly clustering. These are then shown to be strongly clustering in Theorem
\ref{extremelocal}.  Tail algebras of invariant
states  are then given a good deal of attention. In Proposition \ref{tailtriv} we show that the tail algebra of an extreme invariant state is
always trivial. Tail algebras corresponding to non-extreme invariant states, too, can be analized in full detail.
In the first place, their structure is disciplined by a form of the Hewitt and Savage theorem,
in so far as they coincide with the $\bp_\bn$-invariant part of the center of the von Neumann algebra
generated by the given state. As a consequence, they are always abelian and
decompose into a direct integral of ergodic components, as proved in
Proposition \ref{localtail}. The section ends with Proposition \ref{deFinetti}
and Proposition \ref{deFinetti2}, which provide  de Finetti-type theorems for nets of local algebras.
In particular, under the assumption of additivity of the net,
Proposition \ref{deFinetti2} characterizes symmetry of states in terms of a condition reminiscent of identical distribution, and conditional independence of the local algebras
with respect to the conditional expectation onto the tail algebra.\\
Section \ref{infprod} is devoted to infinite $\bz_2$-graded tensor products as distinguished and particularly well-behaved
instances of quasi-local algebras. After providing a quick exposition of infinite graded tensor products, we show in Example \ref{CAR} how the CAR
algebra can be recovered as a suitable infinite product of this type.
The group of finite permutations acts in a natural way on infinite graded tensors products.  
Invariant states for this action lend themselves to a more accomplished description
as opposed to the case of quasi-local algebras. In particular, extreme states can be identified with infinite
products of a single even state, Proposition \ref{extremeprod}. Moreover, as shown in Proposition \ref{weakerg}, the action also turns out to be weakly
ergodic when it is the minimal product to be dealt with. Finally, infinite graded tensor products offer
quite a natural setting to state a fully-fledged version of de Finetti's theorem, for in this case
invariant states correspond to exchangeable quantum stochastic processes, see also \cite{CrFid, CrFid2}.
This is done in Theorem \ref{ffDeFinetti}, where such processes are characterized in terms of identical distribution
and conditional independence.\\
In Section \ref{twistedcomm} we further develop the analysis 
of infinite product states by showing pureness (factoriality) when each
factor is pure (factorial), Proposition \ref{pure} (Proposition \ref{factor}).
The proof of these results heavily relies on a twisted version of the well-known
tensor product commutation theorem, which is obtained in Theorem \ref{VanDaele} 
for the tensor product of two
graded von Neumann algebras, and in Theorem \ref{infinitecomm} for the tensor product of infinitely many von Neumann algebras. Finally, the analysis by
St\o rmer in \cite{St} on the type of factor one can obtain from 
the GNS representation of product states applies to the present 
framework, and the relative results are gathered in 
Proposition \ref{typefactor}.

\section{Symmetric states on quasi-local algebras}\label{quasilocal}

By a $\bz_2$-graded $C^*$-algebra we mean a pair $(\ga, \theta)$ made up of
a (unital) $C^*$-algebra and a (unital) $*$-automorphism $\theta$ which is involutive, namely
$\theta^2={\rm id}_\ga$. Setting $\ga_{1}:=\{a\in\ga: \theta(a)=a\}$ and
$\ga_{-1}:=\{a\in\ga: \theta(a)=-a\}$, one easily sees that $\ga$ decomposes as
$$
\ga=\ga_1\oplus\ga_{-1}\,,
$$
where the direct sum is topological, and
$$
(\ga_i)^*=(\ga^*)_i\,,\,\,
\ga_i\ga_j\subset\ga_{ij}\,,\quad i,j=1,-1\,.
$$
Note that $\ga_1$ is a (unital) $C^*$-subalgebra of $\ga$, while $\ga_{-1}$ is only an involutive closed subspace of $\ga$.
The subspaces $\ga_i$, $i=1,-1$ are often referred to as the homogeneous components of $\ga$, and correspondingly any element of $\ga_i$ is called a homogeneous element of $\ga$.
For any homogeneous element $x\in\ga_{\pm 1}$ we denote its {\it grade} by
$$
\partial(x)=\pm1.
$$
It is easy to see that considering an involutive $*$-automorphism $\theta$ on $\ga$
amounts to assigning a decomposition of $\ga$ into a topological direct sum as above.
Indeed, if one is given such a decomposition, then the corresponding automorphism $\theta$ can be defined as
\begin{equation*}
\th\lceil_{\ga_1}:=\id_{\ga_1}\,,\quad \th\lceil_{\ga_{-1}}:=-\id_{\ga_{-1}}\,.
\end{equation*}
Note that
$$
\eps_\theta:=\frac{1}{2}(\id_{\ga}+\th)\,,
$$
defines a faithful conditional expectation onto $\ga_1$.
When there is no risk of confusion, we will suppress the underscript from $\eps_\theta$ and simply
write $\eps$.
The  $*$-subalgebra $\ga_+:=\ga_1$  and the subspace $\ga_-:=\ga_{-1}$ are commonly referred to as  the {\it even part} and the {\it odd part} of $\ga$, respectively.
Clearly, any $a\in\ga$ can be written as a sum $a=a_++a_-$, with $a_+\in\ga_+$, $a_-\in\ga_-$,
and this decomposition is unique.
Taking $\th=\id_{\ga}$, one sees that any $*$-algebra $\ga$ is equipped with a $\bz_2$ trivial grading. Here, $\ga_+=\ga$ and $\ga_-=\{0\}$.\\
A simple example of $\bz_2$-graded $*$-algebra is obtained by taking a Hilbert space $\ch$, and a bounded self-adjoint unitary $U$ on $\ch$. The adjoint action $\ad_{U}(\cdot):=U \cdot U^*$ is an involutive $*$-automorphism which induces a $\bz_2$-grading on $\cb(\ch)$.

Let $\big(\ga_i,\th_i\big)$, $i=1,2$, be two  $\bz_2$-graded $*$-algebras.
The map $T:\ga_1\to\ga_2$ is said to be {\it even} if it is grading-equivariant, {\it i.e.}
$$
T\circ\th_1=\th_2\circ T\,.
$$
When $\th_2=\id_{\ga_2}$, the map $T:\ga_1\to\ga_2$ is even if and only if it is grading-invariant, that is
$T\circ\th_1=T$. If $T$ is $\bz_2$-linear,
then it is even if and only if $T\lceil_{\ga_{1,-}}=0$. When $\big(\ga_2,\th_2\big)=\big(\bc,\id_\bc\big)$, a functional $f:\ga_1\to\bc$ is even if and only if $f\circ\th=f$.

In the sequel, we will denote by $\cs_+(\ga)$ the weakly-$*$ compact convex  subset of all even states. Even states play a role in giving a $\bz_2$-grading to their GNS structures.
More in detail, suppose that $(\ga,\th)$ is a $\bz_2$-graded $C^*$-algebra, and $\f\in\cs_+(\ga)$. Let $(\ch_\f,\pi_\f, \xi_\f, V_{\th,\f})$ be the GNS covariant representation of $\f$, where the unitary self-adjoint $V_{\th,\f}$ fixes $\xi_\f$ and verifies
$$
\pi_\f(\th(a))=V_{\th,\f}\pi_\f(a)V_{\th,\f}\,, \quad a\in\ga\,.
$$
Then, $(\cb(\ch),\ad_{V_{\th,\f}})$ is a $\bz_2$-graded $C^*$-algebra.
If $\f$ is a pure state, though, evenness is no longer necessary for  
 a unitary on $\ch_\varphi$ implementing the grading to exist. In fact, all is needed is that
$\pi_\varphi$ and $\pi_{\varphi\circ\theta}$ are not disjoint representations\footnote{This means
that there exists a non-null intertwining operator $T$, {\it i.e.} a
$0\neq T\in \cb(\ch_\f, \ch_{\f\circ\theta})$ such that
$T\pi_\f(a)=\pi_{\f\circ\theta}(a) T$ for all $a\in\ga$.}. More
precisely, one has the following.

\begin{prop}\label{ArMor}
Let $\varphi\in\cs_+(\ga)$ be a pure state such that
$\pi_\varphi$ and $\pi_{\varphi\circ\theta}$ are not disjoint. Then there exists a self-adjoint
unitary $U\in \pi_\f(\ga_+)''$ such that
$$U\pi_\f(a)U^*=\pi_\f(\theta(a)), \,\, a\in\ga\quad {\rm and}\,\, \langle U\xi_\f, \xi_\f\rangle\geq 0\,.$$
\end{prop}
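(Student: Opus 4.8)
The plan is to first produce \emph{some} unitary implementing $\th$ in the GNS space, then to normalise it to a self-adjoint unitary with the correct sign, and finally---this is the real point---to locate it inside $\pi_\f(\ga_+)''$.

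Since $\f$ is pure, $\pi_\f$ is irreducible, and so is $\pi_{\f\circ\th}$, because $\th$ is an automorphism; indeed $\pi_{\f\circ\th}$ is unitarily equivalent to $\pi_\f\circ\th$ by uniqueness of the GNS triple, the associated vector state being $a\mapsto\langle\pi_\f(\th(a))\xi_\f,\xi_\f\rangle=(\f\circ\th)(a)$. Two irreducible representations are either disjoint or unitarily equivalent, so the non-disjointness hypothesis forces $\pi_\f\circ\th\cong\pi_\f$. Composing the intertwiners yields a unitary $U_0$ on $\ch_\f$ with $U_0\pi_\f(a)U_0^*=\pi_\f(\th(a))$ for all $a\in\ga$. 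Because $\th^2=\id_\ga$, the operator $U_0^2$ implements the trivial automorphism, hence $U_0^2\in\pi_\f(\ga)'=\bc\,\idd$ by irreducibility; writing $U_0^2=c\,\idd$ with $|c|=1$ and rescaling $U_0$ by a square root of $c^{-1}$, I obtain a unitary $U$ with $U^2=\idd$, i.e. a self-adjoint unitary, still implementing $\th$. This pins $U$ down up to a sign, and since $U=U^*$ the number $\langle U\xi_\f,\xi_\f\rangle$ is real, so exactly one of $\pm U$ meets the normalisation $\langle U\xi_\f,\xi_\f\rangle\ge 0$ (either one, if it vanishes).

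The crux is to show $U\in\pi_\f(\ga_+)''$, and I would argue that in fact $\pi_\f(\ga_+)''$ coincides with the whole commutant $\{U\}'$, which evidently contains $U$. One inclusion is immediate: for $a\in\ga_+$ one has $U\pi_\f(a)U^*=\pi_\f(\th(a))=\pi_\f(a)$, so $\pi_\f(\ga_+)\subseteq\{U\}'$ and hence $\pi_\f(\ga_+)''\subseteq\{U\}'$. For the reverse inclusion I would consider $E_+:=\tfrac12(\id_{\cb(\ch_\f)}+\ad_U)$ on $\cb(\ch_\f)$. Using $U^2=\idd$ one checks that $E_+$ is an idempotent, hence a normal (WOT-continuous) conditional expectation whose range is precisely $\{U\}'$, and it satisfies $E_+(\pi_\f(a))=\tfrac12\big(\pi_\f(a)+\pi_\f(\th(a))\big)=\pi_\f(\eps(a))\in\pi_\f(\ga_+)$, where $\eps$ is the conditional expectation onto $\ga_+$ introduced above. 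Since $\f$ is pure, $\pi_\f(\ga)$ is WOT-dense in $\cb(\ch_\f)$; applying the WOT-continuous map $E_+$ to an approximating net shows that every element of $\{U\}'=E_+(\cb(\ch_\f))$ is a WOT-limit of elements of $\pi_\f(\ga_+)$. Therefore $\{U\}'\subseteq\pi_\f(\ga_+)''$, giving $\pi_\f(\ga_+)''=\{U\}'\ni U$.

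I expect the main obstacle to be exactly this last membership: the easy commutation relation only places $U$ in the commutant $\pi_\f(\ga_+)'$, and the naive attempt to push it into the bicommutant by expressing $\pi_\f(\ga_-)$ through $\pi_\f(\ga_+)$ and $U$ fails, since the grading need not be inner in $\ga$ itself. The device that resolves it is the normal expectation $E_+$ built from $U$: it transports the irreducibility of $\pi_\f$ (the WOT-density of $\pi_\f(\ga)$) onto the even part, identifying $\pi_\f(\ga_+)''$ with $\{U\}'$. I note that evenness of $\f$ is never used---only purity together with non-disjointness---so the argument covers the stated case \emph{a fortiori}.
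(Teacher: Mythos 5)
Your proof is correct and is essentially the proof the paper invokes: the paper simply refers to Lemma 3.1 of Araki--Moriya \cite{AM1}, whose argument is exactly your chain---irreducibility plus non-disjointness yields a unitary implementer, $U_0^2\in\pi_\f(\ga)'=\bc\,\idd$ permits the phase and sign normalisation, and the WOT-continuous averaging $\tfrac12(\id+\ad_U)$ applied to a net in the weakly dense $\pi_\f(\ga)$ places $U$ in $\pi_\f(\ga_+)''$. Your identification $\pi_\f(\ga_+)''=\{U\}'$ is a mild (correct) strengthening of the membership actually needed, obtained by the same device.
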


\begin{proof}
Same proof as Lemma 3.1 in \cite{AM1}.
\end{proof}

We can now move on to consider quasi-local algebras
as notable examples of $\bz_2$-graded $C^*$-algebra.
To this aim, denote by $\cp_0(\bn)$ the set of all finite subsets of $\bn$.

\begin{defin}\label{lnet}
By a quasi-local algebra over  $\cp_0(\bn)$ we mean a unital $\bz_2$-graded $C^*$-algebra
$(\ga, \theta)$, where $\ga$ is the inductive limit of a
net $\{\ga(I): I\in\cp_0(\bn)\}$ of local unital $C^*$-subalgebras $\ga(I)\subset\ga$ such that:

\medskip
\begin{itemize}
\item [(i)]  for every $I, J\in \cp_0(\bn)$ with $I\subset J$, one has $\ga(I)\subset\ga(J)$;
\item [(ii)] for every $I\in\cp_0(\bn)$, one has $\theta(\ga(I))=\ga(I)$;
\item [(iii)] for every $I, J\in\cp_0(\bn)$ with $I\cap J=\emptyset$, and homogeneous $x\in\ga(I)$ and $y\in\ga(J)$,
 $x$ and $y$ commute when one of them is even, and anticommute when they are both odd.
\end{itemize}
\end{defin}

We should mention that the net of local algebras can of course be indexed by more general sets than $\cp_0(\bn)$, see
{\it e.g.} \cite{BR1}. However, the choice of $\cp_0(\bn)$ made here is the most appropriate insofar as we want
our quasi local-algebra to be acted upon by $\bp_\bn$, the group of finite permutations of $\bn$.
More  precisely, throughout this section we will be focusing on local actions of $\bp_\bn$ on $\ga$, as
defined below.

\begin{defin}\label{locact}
A local action of $\bp_\bn$ on  a quasi-local $C^*$-algebra $\ga$ is a group homomorphism
$\a: \bp_\bn\rightarrow{\rm Aut}(\ga)$ such that
\begin{itemize}
\item [(i)] the action is grading-equivariant, that is $\a_\s\circ\theta=\theta\circ\a_\s$, for every
permutation $\s\in\bp_\bn$;
\item [(ii)]  for every finite subset $I\subset\bn$ and $\s\in\bp_\bn$, one has $\a_\s(\ga(I))=\ga(\s(I))$.
\end{itemize}
\end{defin}

We next show that the states of $\ga$ which are invariant under such an action
of $\bp_\bn$ enjoy good properties. First, they are automatically even.
Second, they are weakly (and in fact strongly) clustering as soon as they are extreme.
These properties are proved in the propositions below. Before stating them, though, some notation and definitions
need to be set first. \\
A state $\om$ on $\ga$ is invariant under $\a$, or equivalently $\a$-invariant, if
$\om\circ\a_\s=\om$, for every $\s\in\bp_\bn$. The set of all $\a$-invariant states, which we denote
by $\cs^{\bp_\bn}(\ga)$, is weakly-$*$
compact and convex. Its extreme states are called the {\it ergodic} states for the action
of $\bp_\bn$. The set of all invariant extreme states
will be denoted by $\ce(\cs^{\bp_\bn}(\ga))$.
We will be using the terms invariant states and symmetric states
interchangeably throught the paper.\\
If now $(\ch_\om, \pi_\om, \xi_\om)$ is the GNS triple associated with a given state $\om$ in $\cs^{\bp_\bn}(\ga)$, the action of every
$\a_\s$ can be implemented on the Hilbert space $\ch_\om$ by a unitary $U_\s^\om$ uniquely determined by
$$U_\s^\om \pi_\om(a)\xi_\om:=\pi_\om(\a_\s(a))\xi_\om,\quad a\in\ga\,.$$
We denote by $\ch_\om^{\bp_\bn}\subset\ch_\om$ the closed subspace of all invariant vectors under the action of the unitaries
$U_\s^\om$, namely
$$\ch_\om^{\bp_\bn}:=\{\xi\in\ch_\om: U_\s^\om \xi=\xi,\, \textrm{for all}\, \s\in\bp_\bn\}\, .$$
The orthogonal projection onto $\ch_\om^{\bp_\bn}$ is denoted by $E_\om$.
As is clear, the one-dimensional subspace $\bc\xi_\om$
is contained in $\ch_\om^{\bp_\bn}$, which means
$E_\om$ is never $0$.
As is known from the general theory of group actions through
automorphisms on $C^*$-algebras,  the condition that
$\ch_\om^{\bp_\bn}$ reduces to $\bc\xi_\om$ implies that
$\om$ is extreme in $\cs^{\bp_\bn}(\ga)$, see {\it e.g.} \cite[Proposition 3.1.10]{Sak}.
The reverse implication may well fail to hold for a given action of a given group $G$ on a general $C^*$-algebra $\ga$.
However, it does hold provided that the system $(\ga, G, \a)$ is what is known as a $G$-abelian dynamical system. This is by definition the case
when, for every $G$-invariant state $\om$, the set $E_\om\pi_\om(\ga)E_\om$ is an abelian family of operators acting on
$\ch_\om$.
Among other things, we next show that any local action
of $\bp_\bn$ on a quasi-local $C^*$-algebra is
always $\bp_\bn$-abelian.

For every natural $n$, denote by $\bp_n\subset\bp_\bn$ the finite
subgroup of permutations which act trivially from $n+1$ onwards.
Note that $\bp_\bn=\bigcup_n \bp_n$.
We adopt the notation in \cite{CFCMP} and define the Cesàro average of an arbitrary operator-valued function
$f: \bp_\bn\rightarrow \cb(\ch)$ as
$$M(f(\s)):=\lim_{n\rightarrow\infty} \frac{1}{n!}\sum_{\s\in \bp_n} f(\sigma)$$
as long as the limit exists in a suitable sense (for instance in the strong/weak operator topology).
We recall that for any $\om$ in $\cs^{\bp_\bn}(\ga)$ and $\s\in\bp_\bn$ one has
$M(U_\s^\om)=E_\om$, where the equality is understood in the strong operator topology,
see \cite[Proposition 3.1]{CFCMP}.

\begin{prop}\label{symmquasi}
Let $\a$ be a local action of $\bp_\bn$ on a quasi-local algebra $\ga$. If $\om\in\cs^{\bp_\bn}(\ga)$, then
\begin{enumerate}
\item $\om$ is even;
\item $E_\om\pi_\om(\ga) E_\om$ is a commuting family of operators, hence the dynamical system is
$\bp_\bn$-abelian;
\item $\om\in\ce(\cs^{\bp_\bn}(\ga))$  if and only if
${\rm dim} \ch_\om^{\bp_\bn}=1$.
\end{enumerate}
\end{prop}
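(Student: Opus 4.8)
My strategy is to isolate a single technical fact from which all three assertions follow, namely that the compression of the odd part of $\ga$ to the space of invariant vectors vanishes:
\begin{equation}
E_\om\pi_\om(x)E_\om=0\qquad\text{for every odd }x\in\ga. \tag{$\star$}
\end{equation}
The plan is to prove $(\star)$ first, deduce evenness (1) from it directly, then use it together with a couple of averaging identities to obtain commutativity (2), and finally invoke the general theory of $G$-abelian systems for (3).

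To prove $(\star)$, by continuity of $c\mapsto E_\om\pi_\om(c)E_\om$ and density of the local algebras it suffices to treat $x=x^*\in\ga(I)$ odd, with $I\in\cp_0(\bn)$ finite (a general odd element is a complex combination of self-adjoint odd ones, the odd part being $*$-closed). For an invariant unit vector $\zeta$ one has $(U^\om_\s)^*\zeta=\zeta$, so $\frac{1}{n!}\sum_{\s\in\bp_n}\pi_\om(\a_\s(x))\zeta=\bigl(\frac{1}{n!}\sum_{\s\in\bp_n}U^\om_\s\bigr)\pi_\om(x)\zeta$ converges in norm to $E_\om\pi_\om(x)\zeta$, whence
\[
\|E_\om\pi_\om(x)\zeta\|^2=\lim_{n}\frac{1}{(n!)^2}\sum_{\s,\tau\in\bp_n}F(\s,\tau),\qquad F(\s,\tau):=\langle\pi_\om(\a_\tau(x)\a_\s(x))\zeta,\zeta\rangle .
\]
The crux is that $F(\s,\tau)=0$ whenever $\s(I)\cap\tau(I)=\emptyset$. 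Indeed, applying the involution $\rho$ interchanging $\s(I)$ and $\tau(I)$ (available precisely because $\a$ is a \emph{local} action, Definition \ref{locact}(ii)) and using $U^\om_\rho\zeta=\zeta$ gives $F(\s,\tau)=F(\tau,\s)$, whereas oddness and disjointness of the supports force $\a_\tau(x)\a_\s(x)=-\a_\s(x)\a_\tau(x)$ by axiom (iii) of Definition \ref{lnet}, i.e.\ $F(\tau,\s)=-F(\s,\tau)$; the two identities yield $F(\s,\tau)=0$. Since for fixed finite $I$ the fraction of pairs $(\s,\tau)\in\bp_n\times\bp_n$ with $\s(I)\cap\tau(I)\neq\emptyset$ tends to $0$ while $|F|\le\|x\|^2$, the limit is $0$, proving $(\star)$. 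Statement (1) is then immediate: for odd $x$, since $\xi_\om\in\ch_\om^{\bp_\bn}$,
\[
\om(x)=\langle\pi_\om(x)E_\om\xi_\om,E_\om\xi_\om\rangle=\langle E_\om\pi_\om(x)E_\om\xi_\om,\xi_\om\rangle=0,
\]
so $\om\circ\theta=\om$.

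For (2) I would record two averaging identities, valid for all $c,d\in\ga$ and resting only on $E_\om U^\om_\s=U^\om_\s E_\om=E_\om$ and $\pi_\om(\a_\s(c))=U^\om_\s\pi_\om(c)(U^\om_\s)^*$: first $E_\om\pi_\om(c)E_\om\pi_\om(d)E_\om=E_\om M\bigl(\pi_\om(c\,\a_\s(d))\bigr)E_\om$ (insert $E_\om=M(U^\om_\s)$ in the middle), and second $E_\om M\bigl(\pi_\om(\a_\s(c)\,d)\bigr)E_\om=E_\om\pi_\om(c)E_\om\pi_\om(d)E_\om$ (move $U^\om_\s$ to the right and reindex $\s\mapsto\s^{-1}$). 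Subtracting, commutativity of $E_\om\pi_\om(a)E_\om$ and $E_\om\pi_\om(b)E_\om$ for local $a\in\ga(I)$, $b\in\ga(J)$ reduces to $E_\om M\bigl(\pi_\om([a,\a_\s(b)])\bigr)E_\om=0$. For every $\s$ with $\s(J)\cap I=\emptyset$, axiom (iii) gives $[a,\a_\s(b)]=-2\,\a_\s(b_-)a_-$, only the odd–odd term surviving, and the remaining $\s$ are Cesàro-negligible as before. Finally the second identity yields $E_\om M\bigl(\pi_\om(\a_\s(b_-)a_-)\bigr)E_\om=E_\om\pi_\om(b_-)E_\om\pi_\om(a_-)E_\om$, which is $0$ by $(\star)$ because $a_-$ is odd. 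Thus the compressions commute on local elements, hence on all of $\ga$ by density, so $E_\om\pi_\om(\ga)E_\om$ is a commuting family and the system is $\bp_\bn$-abelian.

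Part (3) is then structural. The implication $\dim\ch_\om^{\bp_\bn}=1\Rightarrow\om\in\ce(\cs^{\bp_\bn}(\ga))$ holds for any action and is already recalled in the text via \cite[Proposition 3.1.10]{Sak}; the converse is exactly where $\bp_\bn$-abelianness enters, and I would quote the standard characterization of extremal invariant states for $G$-abelian systems (see \cite{Sak}, or \cite{BR1}): under $\bp_\bn$-abelianness $\om$ is extremal in $\cs^{\bp_\bn}(\ga)$ if and only if $E_\om$ is one-dimensional. I expect the genuine obstacle to lie entirely in $(\star)$ and in the reduction of (2): the whole difficulty is to control the odd–odd anticommutator after averaging, and the key is the simultaneous use of a support-swapping involution—available only because the action is local in the sense of Definition \ref{locact}(ii)—together with anticommutation at disjoint supports. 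Once $(\star)$ is secured, (1)–(3) are short; the only routine points needing care are that the exceptional sets of permutations are Cesàro-negligible and that the manipulations of the mean are justified in the strong operator topology.
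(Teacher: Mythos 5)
Your overall architecture coincides with the paper's: the paper too isolates exactly your $(\star)$ (its equation \eqref{oddvanish}), deduces (1) from it, obtains (2) by the same kind of Ces\`aro-averaging computation based on $M(U^\om_\sigma)=E_\om$ and the counting bound $|\{\sigma\in\bp_n:\sigma(I)\cap I\neq\emptyset\}|\leq C(n-1)!$ from \cite[Lemma 3.3]{CFCMP}, and settles (3) by citing \cite[Proposition 3.1.12]{Sak}. Your identities (A)/(B) and the reduction of the commutator to the odd--odd term $-2\,\a_\sigma(b_-)a_-$, killed by $(\star)$, are if anything more explicit than the paper's ``similar computations'' (the paper instead uses \eqref{oddvanish} to reduce (2) to even $a,b$ first); these parts are correct.

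The one genuine soft spot is inside your proof of $(\star)$, at the symmetry step $F(\sigma,\tau)=F(\tau,\sigma)$. Conjugating by the swap involution $\rho$ gives $F(\sigma,\tau)=\langle\pi_\om(\a_{\rho\tau}(x)\,\a_{\rho\sigma}(x))\zeta,\zeta\rangle$, and to identify this with $F(\tau,\sigma)$ you need $\a_{\rho\tau}(x)=\a_\sigma(x)$, i.e.\ that $\a_{\sigma^{-1}\rho\tau}$ acts as the identity on $\ga(I)$ merely because $\sigma^{-1}\rho\tau$ fixes $I$ pointwise. Definition \ref{locact}(ii) only provides set-wise covariance $\a_\sigma(\ga(I))=\ga(\sigma(I))$ and does not force this: for instance, twisting a local action by $\theta^{{\rm sgn}(\sigma)}$ still satisfies (i)--(ii) while permutations fixing $I$ pointwise act as $\theta$ on $\ga(I)$. (The paper does tacitly use the stronger pointwise property later, in the lemma before Theorem \ref{extremelocal} and in Proposition \ref{tailtriv}, but its proof of the present proposition avoids it: it runs a single average and concludes from the positivity trick $\{T,T^*\}=0\Rightarrow T=0$ with $T=E_\om\pi_\om(a)E_\om$.) Your argument is repaired in one line without any swap: the double sum is symmetric under $(\sigma,\tau)\mapsto(\tau,\sigma)$, so
\[
\sum_{\sigma,\tau}F(\sigma,\tau)=\frac12\sum_{\sigma,\tau}\bigl(F(\sigma,\tau)+F(\tau,\sigma)\bigr)
=\frac12\sum_{\sigma,\tau}\langle\pi_\om\bigl(\{\a_\tau(x),\a_\sigma(x)\}\bigr)\zeta,\zeta\rangle\,,
\]
and the anticommutator vanishes whenever $\sigma(I)\cap\tau(I)=\emptyset$ by (iii) of Definition \ref{lnet}, the remaining pairs being negligible by your own counting ($\leq |I|^2/n$ of all pairs). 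With that substitution your proof is complete, correct, and stays strictly within the stated hypotheses.
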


\begin{proof}
As for (1), we need to show that any symmetric state $\om$ vanishes on all odd elements of $\ga$.  By density, it is
enough to prove that $\om(a)=0$ for every $a$ which is a localized odd element, say
$a\in\ga(I)$ for some finite subset $I\subset\bn$. Denoting by $\{\cdot, \cdot\}$
the anticommutator, for an $a$ as before we have
\begin{align*}
&\{E_\om\pi_\om(a) E_\om, E_\om\pi_\om(a^*)E_\om\}\\
=& M(E_\om\pi_\om(a)U_\s^\om\pi_\om(a^*)E_\om+E_\om\pi_\om(a^*)U_\s^\om\pi_\om(a)E_\om)\\
=&M(E_\om\pi_\om(\{a, \a_\s(a^*)\})E_\om)\\
=&\lim_{n\rightarrow\infty}\frac{1}{n!}\sum_{\s\in\bp_n}E_\om\pi_\om(\{a, \a_\s(a^*)\})E_\om=0
\end{align*}
where the last equality holds because for every $n$ such that $I\subset\{1, \ldots, n\}$ one has
$|\{\s\in\bp_n: \s(I)\cap I\neq \emptyset\}|\leq C(n-1)!$, with $C$ being a constant that does not depend on
$n$, see \cite[Lemma 3.3]{CFCMP}, whereas if $\s$ is such that $\s(I)\cap I= \emptyset$ then
$\{a, \a_\s(a^*)\}=0$ by virtue of $(iii)$ of Definition \ref{lnet}. This readily implies
that
\begin{equation}\label{oddvanish}
E_\om\pi_\om(a) E_\om=0\quad \textrm{for any odd}\,\, a\in\ga\,.
\end{equation}
In particular, for such an $a$ one has $$\om(a)= \langle \pi_\om(a)\xi_\om, \xi_\om \rangle= \langle E_\om\pi_\om(a)E_\om\xi_\om, \xi_\om \rangle=0\, ,$$ and so (1) is proved.\\
As for (2), thanks to Equality \eqref{oddvanish} it is enough to verify that the commutator
$[E_\om\pi_\om(a) E_\om, E_\om\pi_\om(a) E_\om]$ is $0$ for even $a,b\in\ga$, which can be seen with
similar computations to those in $(1)$.\\
Property (3) holds thanks to Proposition 3.1.12 in \cite{Sak}.
\end{proof}

For any fixed integer $n\geq 1$, we denote by $\s_n$ the permutation acting on $\bn$ as
\begin{equation}\label{sigman}
\s_n(k)=\left\{\begin{array}{lll}
k+2^{n-1}, &  1\leq k\leq 2^{n-1}\\
k-2^{n-1}, &  2^{n-1}< k\leq 2^{n}\\
k, & k> 2^{n}\\
\end{array}\right.
\end{equation}
The next result is key to further characterize extreme symmetric states.
\begin{lem}
 If $\om\in\cs(\ga)$ is an extreme
symmetric state with respect to a local action $\a$ of $\bp_\bn$ on a quasi-local algebra $\ga$, then for every $a\in\ga$ one has
$$\lim_{n\rightarrow\infty} \pi_{\om}(\alpha_{\s_n}(a))\xi_\om=\om(a)\xi_\om, $$
in the weak operator topology.
\end{lem}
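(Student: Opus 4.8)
The plan is to read the statement as an asymptotic factorization property and to exploit extremality through Proposition~\ref{symmquasi}(3), which gives $\ch_\om^{\bp_\bn}=\bc\xi_\om$, so that the mean ergodic equality $M(U_\s^\om)=E_\om$ recorded above identifies $E_\om$ with the rank-one projection $\eta\mapsto\langle\eta,\xi_\om\rangle\xi_\om$. Two routine reductions come first. Since $\|\pi_\om(\a_{\s_n}(a))\xi_\om\|\le\|a\|$ uniformly in $n$, and localized elements are dense in $\ga$, it is enough to treat $a\in\ga(I)$ with $I\in\cp_0(\bn)$ fixed; and testing weak convergence against the dense family of vectors $\pi_\om(b)\xi_\om$, $b\in\ga(J)$ localized, and writing $\pi_\om(\a_{\s_n}(a))\xi_\om=U_{\s_n}^\om\pi_\om(a)\xi_\om$, the assertion becomes the scalar limit
\[
\lim_{n\to\infty}\om\big(b^*\a_{\s_n}(a)\big)=\om(a)\,\om(b^*),\qquad a\in\ga(I),\ b\in\ga(J).
\]

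Next I would record what is automatic. Setting $\psi_n:=\pi_\om(\a_{\s_n}(a))\xi_\om$, a bounded net, let $\zeta$ be any of its weak cluster points. Because $U_\tau^\om\xi_\om=\xi_\om$ and $\om$ is invariant, $\langle\psi_n,\xi_\om\rangle=\om(\a_{\s_n}(a))=\om(a)$ for every $n$, whence $\langle\zeta,\xi_\om\rangle=\om(a)$; and since $E_\om U_\tau^\om=E_\om$ for all $\tau$ while $E_\om$ is weakly continuous, one gets $E_\om\zeta=E_\om\pi_\om(a)\xi_\om=\om(a)\xi_\om$. Thus the component of $\zeta$ along $\xi_\om$ is already the desired one, and it remains only to show $\zeta\in\ch_\om^{\bp_\bn}$, for then $\ch_\om^{\bp_\bn}=\bc\xi_\om$ forces $\zeta=\om(a)\xi_\om$ and, the cluster point being unique, the whole net converges weakly. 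Applying $\a_{\tau^{-1}}$ inside $\om$ and using invariance, the invariance $U_\tau^\om\zeta=\zeta$ reduces to $\lim_n\om(c^*\a_{\s_n}(a))=0$ for $c:=\a_{\tau^{-1}}(b)-b$, which is localized and satisfies $\om(c)=0$; this is precisely the scalar limit above in the mean-zero form.

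The crux, and the step I expect to be the genuine obstacle, is this factorization along the specific doubling sequence $\{\s_n\}$, which is not covered by asymptotic abelianness (absent here) but only by $\bp_\bn$-abelianness. The tools I would deploy are those that single out $\s_n$: for $n$ large $\s_n(I)\subset\{2^{n-1}+1,\dots,2^{n}\}$ is disjoint from $\supp(c)$, and for $m\neq n$ the regions $\s_m(I)$ and $\s_n(I)$ are mutually disjoint. Evenness of $\om$ (Proposition~\ref{symmquasi}(1)) annihilates the mixed-grade summands of $c^*\a_{\s_n}(a)$, while axiom (iii) of Definition~\ref{lnet} turns the surviving terms into products of (anti)commuting homogeneous factors supported on disjoint regions. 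On this cleaned expression I would invoke $M(U_\s^\om)=E_\om$, giving convergence of the full symmetric average $\tfrac1{N!}\sum_{\s\in\bp_N}\om(c^*\a_\s(a))\to\om(c^*)\om(a)=0$, and then transfer this to the designated sequence using the counting estimate $|\{\s\in\bp_N:\s(I)\cap I\neq\emptyset\}|\le C(N-1)!$ of \cite[Lemma 3.3]{CFCMP} together with the mutual disjointness of the $\s_n(I)$. Showing that the escaping, pairwise-disjoint sequence $\{\s_n\}$ \emph{sees the same limit} as the symmetric average, rather than merely being dominated by it, is where the real work lies.
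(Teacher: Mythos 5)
Your overall skeleton coincides with the paper's, which runs the argument of \cite[Lemma 5.2]{CFCMP}: extract a weak cluster point $\zeta$ of the bounded sequence $\psi_n=\pi_\om(\a_{\s_n}(a))\xi_\om$, show $\zeta\in\ch_\om^{\bp_\bn}$, and let extremality via Proposition \ref{symmquasi}(3) force $\zeta=\om(a)\xi_\om$, so that uniqueness of the cluster point gives weak convergence. But precisely at the crux your argument has a genuine gap, and you concede it yourself. You reduce the invariance $U_\tau^\om\zeta=\zeta$ to $\lim_n\om(c^*\a_{\s_n}(a))=0$ with $c=\a_{\tau^{-1}}(b)-b$ and $\om(c)=0$ --- but this is just the mean-zero instance of the very limit the lemma asserts, so the lemma has been reduced to itself. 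The proposed escape, invoking $M(U_\s^\om)=E_\om$ to get $\frac1{N!}\sum_{\s\in\bp_N}\om(c^*\a_\s(a))\to 0$ and then ``transferring'' to the sequence $\{\s_n\}$ via the counting estimate of \cite[Lemma 3.3]{CFCMP}, cannot close the loop: convergence of symmetric Ces\`aro averages carries no information about the individual terms along one escaping sequence, and the counting lemma only bounds how many permutations fail to move $I$ off itself --- it controls an average, never a single term. Nor can disjointness of $\supp(c)$ and $\s_n(I)$ by itself yield $\om(c^*\a_{\s_n}(a))\to\om(c^*)\om(a)$: factorization across disjoint regions is clause (3) of Theorem \ref{extremelocal}, a \emph{consequence} of this lemma, not an available input; likewise evenness and axiom (iii) of Definition \ref{lnet}, which you flag as the key mechanism, play no role in this particular step.

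The missing idea is much simpler, purely local, and is exactly the one observation the paper adds to \cite{CFCMP}: for fixed $\tau\in\bp_\bn$ and $a\in\ga(I)$ one has $\a_{\tau\s_n}(a)=\a_{\s_n}(a)$ for all large $n$. Indeed, pick $r,s$ with $I\subset\{1,\dots,r\}$ and $\tau$ the identity on $\{s,s+1,\dots\}$; for $n\ge\max\{r,s\}$ the element $\a_{\s_n}(a)$ lies in $\ga(\s_n(I))$ with $\s_n(I)\subset\{2^{n-1}+1,\dots,2^n\}$, a region on which, as the paper notes, $\a_\tau$ acts as the identity. Hence $U_\tau^\om\psi_n=\psi_n$ \emph{exactly} for all $n\ge N_{\tau,a}$, so every weak cluster point is invariant outright --- no averaging, no evenness, no gradedness enters. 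In your scalar formulation the same observation gives exact vanishing rather than an asymptotic one: since $\a_{\tau^{-1}}$ fixes $\a_{\s_n}(a)$ for large $n$, invariance of $\om$ yields
$$\om\big(\a_{\tau^{-1}}(b)^*\a_{\s_n}(a)\big)=\om\big(\a_{\tau^{-1}}(b^*\a_{\s_n}(a))\big)=\om\big(b^*\a_{\s_n}(a)\big)\,,$$
that is $\om(c^*\a_{\s_n}(a))=0$ for all $n\ge N_{\tau,a}$. You had the right raw material --- you note that $\s_n(I)$ escapes the support of $\tau$ --- but assembled as exact eventual invariance of $\psi_n$ it finishes the proof, whereas routed through Ces\`aro means it does not.
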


\begin{proof}
The proof is the same as in Lemma 5.2 in \cite{CFCMP}.
The only thing that needs to be taken care of is that
if $a\in \ga(I)$ for some  finite subset $I\subset\bn$ and $\s\in\bp_\bn$, there exists
$N_{\s, a}\in\bn$ such that $\alpha_{\s\s_n}(a)=\alpha_{\s_n}(a)$
for every $n\geq N_{A, \s}$. To this end, let $r, s\in\bn$
such that $I\subset \{1, \ldots, r\}$ and the restriction of
$\s$ to $\{n\in\bn: n\geq s\}$ is the identity.
Set $N_{\s, a}:=\max\{r, s\}$. For
$n\geq N_{\s, a}$, we have
$\alpha_{\s\s_n}(a)=\alpha_\s(\alpha_{\s_n}(a))=\alpha_{\s_n}(a)$ because
$\alpha_\s$ acts as the identity on each $\ga(J)$ if $J\cap \{1, \ldots s-1\}=\emptyset$.
\end{proof}
Before stating the announced characterization, we recall that a symmetric state $\om$ is
strongly clustering (or mixing) if  for every $a, b\in\ga$ one has $\lim_n \om(\a_{\s_n}(a)b)=\om(a)\om(b)$, {\it cf.}
\cite{St}.

\begin{thm}\label{extremelocal}
For a symmetric state $\om$ on  a quasi-local algebra $\ga$ acted upon
$\bp_\bn$ through a local action $\a$ the following conditions are equivalent:
\begin{enumerate}
\item $\om$ is extreme;
\item  $\om $ is strongly clustering;
\item  $\om(ab)=\om(a)\om(b)$ for every $a\in\ga(I)$ and $b\in\ga(J)$ and finite subsets $I, J\subset\bn$ such that $I\cap J=\emptyset$.
\end{enumerate}
\end{thm}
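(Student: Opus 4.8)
The plan is to prove the cyclic chain of implications $(1)\Rightarrow(2)\Rightarrow(3)\Rightarrow(1)$, leaning on the preceding lemma for the first implication and on the $\bp_\bn$-abelian structure (Proposition \ref{symmquasi}) for the last.

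\textbf{Step $(1)\Rightarrow(2)$.} Assume $\om$ is extreme. Fix $a,b\in\ga$ and compute
\begin{align*}
\om(\a_{\s_n}(a)b)=\langle \pi_\om(b)\xi_\om,\pi_\om(\a_{\s_n}(a))^*\xi_\om\rangle=\langle \pi_\om(b)\xi_\om,\pi_\om(\a_{\s_n}(a^*))\xi_\om\rangle.
\end{align*}
By the preceding lemma, $\pi_\om(\a_{\s_n}(a^*))\xi_\om\to\om(a^*)\xi_\om=\overline{\om(a)}\,\xi_\om$ weakly, so the right-hand side converges to $\overline{\om(a)}\langle \pi_\om(b)\xi_\om,\xi_\om\rangle=\om(a)\om(b)$. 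Hence $\lim_n\om(\a_{\s_n}(a)b)=\om(a)\om(b)$, which is exactly strong clustering. I would first verify this for $a$ in a dense set of localized elements, where the lemma applies directly, and then extend to all $a\in\ga$ by a routine $3\eps$-argument using $\|\a_{\s_n}(a)\|=\|a\|$ and the boundedness of the functionals involved.

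\textbf{Step $(2)\Rightarrow(3)$.} Assume strong clustering and take $a\in\ga(I)$, $b\in\ga(J)$ with $I\cap J=\emptyset$, $I,J$ finite. The key observation is that the permutations $\s_n$ of \eqref{sigman} push the support of $a$ far out: for $n$ large enough, $\s_n(I)$ is disjoint from both $I$ and $J$. I would split $a=a_++a_-$ into its even and odd parts. Since $\om$ is even by Proposition \ref{symmquasi}(1), $\om(a)=\om(a_+)$, and likewise the odd part of $a$ contributes nothing in the limit because $\om$ vanishes on odd elements while $\a_{\s_n}(a_-)$ remains odd and localized disjointly from $b$. For the even part, once $\s_n(I)\cap J=\emptyset$, axiom (iii) of Definition \ref{lnet} gives $\a_{\s_n}(a_+)b=b\,\a_{\s_n}(a_+)$, so $\om(\a_{\s_n}(a_+)b)=\om(b\,\a_{\s_n}(a_+))$; combined with invariance $\om\circ\a_{\s_n}=\om$ and clustering, the limit collapses to $\om(a_+)\om(b)=\om(a)\om(b)$. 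Taking the limit in the clustering identity, whose left-hand side is eventually constant and equal to $\om(ab)$, yields $\om(ab)=\om(a)\om(b)$.

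\textbf{Step $(3)\Rightarrow(1)$.} This is where I expect the main obstacle. Assuming the factorization property on disjointly localized elements, I must show $\om$ is extreme in $\cs^{\bp_\bn}(\ga)$. By Proposition \ref{symmquasi}(3) it suffices to prove $\dim\ch_\om^{\bp_\bn}=1$, i.e. $E_\om$ is the rank-one projection onto $\bc\xi_\om$. The natural route is to show $E_\om\pi_\om(a)E_\om=\om(a)E_\om$ for all $a$, which forces any vector in $\ch_\om^{\bp_\bn}$ to be a scalar multiple of $\xi_\om$. Using $E_\om=M(U_\s^\om)$ in the strong topology, one writes $E_\om\pi_\om(a)E_\om=M\big(E_\om\pi_\om(\a_\s(a))E_\om\big)$ and estimates the Cesàro average by separating permutations $\s$ with $\s(I)\cap I=\emptyset$ (where hypothesis (3) gives $\om(\a_\s(a)b)=\om(a)\om(b)$ type factorization on matrix elements) from the asymptotically negligible set $\{\s\in\bp_n:\s(I)\cap I\neq\emptyset\}$, whose cardinality is $O((n-1)!)$ by \cite[Lemma 3.3]{CFCMP}. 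The delicate point is translating the factorization hypothesis, stated for the state $\om$, into the statement $\langle E_\om\pi_\om(a)E_\om\eta,\z\rangle=\om(a)\langle E_\om\eta,\z\rangle$ at the level of inner products on $\ch_\om$; I would do this by testing against vectors of the form $\pi_\om(c)\xi_\om$ with $c$ localized, reducing everything to an averaged version of hypothesis (3) and controlling the error term via the counting estimate. Once $E_\om\pi_\om(\ga)E_\om=\bc E_\om$ is established, $\dim\ch_\om^{\bp_\bn}=1$ follows and extremality is proved.
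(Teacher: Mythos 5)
There is a genuine gap in your step $(2)\Rightarrow(3)$, at the assertion that the sequence $\om(\a_{\s_n}(a)b)$ is ``eventually constant and equal to $\om(ab)$''. That assertion is the entire content of the implication, and the justification you offer --- invariance $\om\circ\a_{\s_n}=\om$ --- does not deliver it: $\a_{\s_n}$ moves $b$ as well, so (since $\s_n$ is an involution) invariance yields only $\om(\a_{\s_n}(a)b)=\om(a\,\a_{\s_n}(b))$, not $\om(ab)$. Worse, nothing in your argument uses the hypothesis $I\cap J=\emptyset$ in an essential way: the commutation $\a_{\s_n}(a_+)b=b\,\a_{\s_n}(a_+)$ needs only $\s_n(I)\cap J=\emptyset$, which holds eventually for \emph{arbitrary} finite $I,J$. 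If your constancy claim were provable in that generality, $(2)$ would force $\om(ab)=\om(a)\om(b)$ for all localized $a,b$, in particular $\om(aa^*)=|\om(a)|^2$, i.e.\ $\om$ multiplicative on local elements --- false in general. The repair is an auxiliary permutation, which is precisely the paper's proof: since $I\cap J=\emptyset$, one can choose $\s\in\bp_\bn$ acting as the identity on $I$ and as $\s_m$ on $J$; then $\om(ab)=\om(\a_\s(ab))=\om(a\,\a_{\s_m}(b))$ for every $m$, so the sequence $\{\om(a\,\a_{\s_m}(b))\}_{m}$ is constant, while clustering (applied after taking adjoints and conjugates, $\om(a\,\a_{\s_m}(b))=\overline{\om(\a_{\s_m}(b^*)a^*)}$) gives $\lim_m\om(a\,\a_{\s_m}(b))=\om(a)\om(b)$. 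This step also quietly uses, as does the preceding lemma in the paper, that $\a_\s$ agrees with $\a_{\s_m}$ on $\ga(J)$ once $\s$ and $\s_m$ agree on $J$. With constancy in hand, your even/odd decomposition in Step 2 becomes superfluous.

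Your other two steps are sound in outline, though they organize the proof differently from the paper. Step $(1)\Rightarrow(2)$ is essentially the argument the paper imports from \cite[Theorem 5.3]{CFCMP}. For the closing of the cycle, the paper does not prove $(3)\Rightarrow(1)$ directly: it notes $(3)\Rightarrow(2)$ is immediate (for localized $a\in\ga(I)$, $b\in\ga(J)$ and large $n$ one has $\s_n(I)\cap J=\emptyset$, so $(3)$ plus invariance give $\om(\a_{\s_n}(a)b)=\om(a)\om(b)$, then extend by density) and quotes $(2)\Rightarrow(1)$ from \cite{CFCMP}. Your direct route via $E_\om=M(U_\s^\om)$ and the counting estimate of \cite[Lemma 3.3]{CFCMP} is viable and more self-contained, and can be simplified: it suffices to establish the one-sided relation $E_\om\pi_\om(a)\xi_\om=\om(a)\xi_\om$ by computing $M(\om(b\,\a_\s(a)))$ on localized $a,b$, after which cyclicity of $\xi_\om$ forces $\ch_\om^{\bp_\bn}=\bc\xi_\om$; the two-sided compression $E_\om\pi_\om(a)E_\om$ and its double Ces\`aro average are not needed. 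But since your cycle closes through $(2)\Rightarrow(3)$, the gap identified above must be filled for the proposal to stand.
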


\begin{proof}
The equivalence $(1)\Leftrightarrow(2)$ can be proved exactly as is done in \cite[Theorem 5.3]{CFCMP}.
The implication $(3)\Rightarrow (2)$ is obvious, so it remains to show that
$(2)\Rightarrow (3)$. To this aim, consider $\s\in\bp_\bn$ such that
$\s$ is the identity on $I$ and coincides with $\s_m$ on $J$,
where $\sigma_m$ is the permutation defined in \eqref{sigman}. We have
\begin{align*}
\om(ab)=\om(\a_\s(ab))=\om(a\a_{\s_m}(b))=\lim_m\om(a\a_{\s_m}(b))=\om(a)\om(b)\, ,
\end{align*}
where in the second-last equality we have used that $\{\om(a\a_{\s_m}(b))\}_{m\in\bn}$ is actually a constant sequence.
\end{proof}
The next result shows that $\bp_\bn$ is represented by a large group of automorphisms in the sense of \cite{St67}
whenever it acts on a quasi-local $C^*$-algebra as in Definition \ref{locact}.
This means  that for any invariant state $\om\in\cs(\ga)$ and any self-adjoint $a\in\ga$ one has
$$\overline{{\rm conv}}\{\pi_\om(\a_\s(a)): \s\in\bp_\bn\}\cap\pi_\om(\ga)'\neq \emptyset\, .$$

\begin{prop}\label{large}
Any local action $\a$ of $\bp_\bn$ on a quasi local $C^*$-algebra $\ga$ is
a large group of automorphisms.
\end{prop}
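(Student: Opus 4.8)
The plan is to produce the required element of $\overline{\conv}\{\pi_\om(\a_\s(a)):\s\in\bp_\bn\}\cap\pi_\om(\ga)'$ explicitly, for a fixed $\om\in\cs^{\bp_\bn}(\ga)$ and self-adjoint $a\in\ga$, as a weak operator limit point of the Cesàro averages
\[
A_n:=\frac{1}{n!}\sum_{\s\in\bp_n}\pi_\om(\a_\s(a)),\qquad A_n^{\pm}:=\frac{1}{n!}\sum_{\s\in\bp_n}\pi_\om(\a_\s(a_{\pm})).
\]
Each $A_n$ is a convex combination of the operators $\pi_\om(\a_\s(a))$ and $\|A_n\|\leq\|a\|$, so by weak compactness of the ball the sequence has a subnet converging, in the weak operator topology, to some $A$ lying in the weak operator closure of $\conv\{\pi_\om(\a_\s(a)):\s\in\bp_\bn\}$. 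It then remains only to show $A\in\pi_\om(\ga)'$. Writing $a=a_++a_-$ with $a_+=\eps(a)$ and $a_-=a-\eps(a)$ the (self-adjoint) even and odd parts, and using that $\a_\s$ is grading-equivariant, one has $A_n=A_n^++A_n^-$, and I would analyse the two summands separately.

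For the even part I would fix a homogeneous $b\in\ga(J)$ and take $a_+$ localized in some $\ga(I)$: whenever $\s(I)\cap J=\emptyset$ the element $\a_\s(a_+)$ is even and localized in $\ga(\s(I))$, hence commutes with $b$ by $(iii)$ of Definition \ref{lnet}, so $[\pi_\om(\a_\s(a_+)),\pi_\om(b)]=0$. Since the set of $\s\in\bp_n$ with $\s(I)\cap J\neq\emptyset$ has cardinality at most $C(n-1)!$ by the counting argument of \cite[Lemma 3.3]{CFCMP}, the surviving terms contribute a norm of order $1/n$, so that $[A_n^+,\pi_\om(b)]\to0$ in norm; a routine norm approximation then removes the localization hypotheses on both $a_+$ and $b$.

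The main obstacle is the odd part, and it is genuinely a graded phenomenon: for two odd localized elements with disjoint supports axiom $(iii)$ gives \emph{anticommutation} rather than commutation, so the above reasoning applied to $a_-$ leaves a term that does not vanish. I would circumvent this by proving that $A_n^-\to0$ in the strong operator topology, so that the limit point $A$ is in fact a weak operator limit of $A_n^+$ and the odd part is invisible in the limit. Concretely, testing $A_n^-$ on the dense set of vectors $\pi_\om(c)\xi_\om$ with $c\in\ga(K)$ homogeneous and localized, and using that for $\s$ with $\s(I)\cap K=\emptyset$ one may move $\pi_\om(c)$ past $\pi_\om(\a_\s(a_-))$ at the cost of a fixed sign (the complementary $\s$ being negligible by the same cardinality bound), one is reduced to
\[
\frac{1}{n!}\sum_{\s\in\bp_n}U_\s^\om\,\pi_\om(a_-)\xi_\om\longrightarrow E_\om\,\pi_\om(a_-)\xi_\om=0,
\]
where the convergence is the strong convergence $M(U_\s^\om)=E_\om$ recalled from \cite[Proposition 3.1]{CFCMP}, and the vanishing of the limit follows from Equality \eqref{oddvanish} together with $E_\om\xi_\om=\xi_\om$. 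Uniform boundedness of the $A_n^-$ upgrades this to strong convergence on all of $\ch_\om$, and a final norm approximation disposes of the localization on $a_-$. Combining the two steps, $A$ commutes with every $\pi_\om(b)$ and hence lies in $\pi_\om(\ga)'$, while remaining in the weak operator closure of $\conv\{\pi_\om(\a_\s(a))\}$; this exhibits a point of the required nonempty intersection and establishes that $\a$ is a large group of automorphisms.
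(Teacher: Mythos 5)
Your proof is correct, and it reaches the conclusion by a route that is organized rather differently from the paper's. The paper never manipulates the averaged operators $A_n$ directly: it first proves the state-level statement of asymptotic abelianness in average, $M\{\om(c[\a_\s(a),b]d)\}=0$ for all $a,b,c,d\in\ga$, by splitting $a=a_++a_-$, moving $\a_\s(a_\pm)$ across $c$ at the cost of signs and of the negligible permutations counted by \cite[Lemma 3.3]{CFCMP}, killing the odd contributions with \eqref{oddvanish}, and then --- this is the step your argument avoids entirely --- commuting the compressed even part by means of the $\bp_\bn$-abelianness established in point (2) of Proposition \ref{symmquasi}; largeness then follows by the scheme of \cite[Theorem 4.2]{CFCMP}, which extracts the same weak-operator cluster point of the Ces\`aro means that you construct and tests commutation against vectors $\pi_\om(c)\xi_\om$, $\pi_\om(d)\xi_\om$. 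You replace these two weak ingredients by stronger operator-topology ones: for the even part, the counting bound alone already yields $\|[A_n^+,\pi_\om(b)]\|=O(1/n)$ for localized data, a norm statement stronger than the paper's averaged weak abelianness and requiring no $\bp_\bn$-abelianness at all; for the odd part, the strong convergence $M(U_\s^\om)=E_\om$ combined with \eqref{oddvanish} and $E_\om\xi_\om=\xi_\om$ gives $A_n^-\to 0$ strongly --- your sign bookkeeping when moving $\pi_\om(c)$ across (a fixed sign determined by the parity of $c$, the exceptional permutations being again of order $(n-1)!$) is exactly right, as are the uniform-boundedness and norm-approximation upgrades, and there is no circularity since \eqref{oddvanish} and $M(U_\s^\om)=E_\om$ are established before Proposition \ref{large}. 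The trade-off: the paper's route is shorter because it recycles Proposition \ref{symmquasi} and transfers the CAR-algebra proof essentially verbatim, whereas yours is self-contained past those two facts and yields sharper intermediate information (norm asymptotic abelianness in average on even elements, strong vanishing of the odd Ces\`aro means, and an explicit description of the commutant element as a weak limit point of the even averages). One small point worth a line in a final write-up: \cite[Lemma 3.3]{CFCMP} is quoted in the paper for $\s(I)\cap I\neq\emptyset$, while you need the two-set variant $|\{\s\in\bp_n:\s(I)\cap J\neq\emptyset\}|\leq |I|\,|J|\,(n-1)!$; this follows by the same count, or by applying the lemma to $I\cup J$, so it is a remark rather than a gap.
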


\begin{proof}
The proof can be done as in \cite[Theorem 4.2]{CFCMP} once we have first established asymptotic
abelianness in average of any symmetric state. More explicitly, we need to show that
if $\om$ is a symmetric state on $\ga$, then
$M\{ \om(c[\a_\s(a), b]d)\}=0$ for every $a, b, c, d\in\ga$.\\
We start by observing that
\begin{align*}
M\{ \om(c \a_\s(a) bd)\}=&M\{ \om(\a_\s(a_+) cbd)\}\\
&+M\{ \om(\a_\s(a_-)(c_+- c_-) bd)\}
\end{align*}
 as follows by applying \cite[Lemma 3.3]{CFCMP}.
Now the second summand in the right-hand side of the equality above is $0$ since
$E_\om\pi_\om(a_-)E_\om=0$ thanks to \eqref{oddvanish}.
By $\bp_\bn$-abelianness we then have
\begin{align*}
M\{ \om(c \a_\s(a) bd)\}&=M\{ \om(\a_\s(a_+) cbd)\}\\&= \langle \pi_\om(a_+)E_\om\pi_\om(cbd)\xi_\om, \xi_\om\rangle\\
&=\langle\pi_\om(cbd) E_\om \pi_\om(a_+)\xi_\om, \xi_\om\rangle\\&=M\{ \om(cbd\a_\s(a_+))\}\\
&=M\{ \om(cb\a_\s(a_+)d)\}\,,
\end{align*}
where the last equality is due again to \cite[Lemma 3.3]{CFCMP}.
Now, arguing as above, one easily sees that $M\{\om(cb\a_\s(a_-)d)\}=0$, which ends the
proof.
\end{proof}

As the dynamical system $(\ga,\bp_\bn)$ is $\bp_\bn$-abelian, by \cite[Theorem 3.1.14]{Sak}, one has that the set of symmetric states is indeed a Choquet simplex. This means that any $\bp_\bn$-invariant state is the barycenter of a unique probability measure which is pseudo-supported on the set of extreme states, see \cite{BR1}, page $322$. More in detail, we have

\begin{prop}
Let $\a$ be a local action of $\bp_\bn$ on a quasi-local $C^*$-algebra $\ga$.
If $\om\in\cs^{\bp_\bn}(\ga)$, then there exists a unique probability measure $\m$ pseudo-supported on $\ce(\cs^{\bp_\bn}(\ga))$ such that
\begin{equation}\label{bar}
\om(a)=\int_{\ce(\cs^{\bp_\bn}(\ga))}\psi(a)\di\m(\psi)\,, \quad a\in\ga\,.
\end{equation}
\end{prop}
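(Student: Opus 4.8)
The plan is to obtain the statement as an immediate consequence of the abstract theory of barycentric decompositions on Choquet simplices, so that no genuinely new computation is required. First I would recall, as already noted before the statement, that $\cs^{\bp_\bn}(\ga)$ is a weakly-$*$ compact convex subset of the dual of $\ga$. The crucial structural input is that this convex set is in fact a \emph{Choquet simplex}: by part (2) of Proposition \ref{symmquasi} the dynamical system $(\ga, \bp_\bn, \a)$ is $\bp_\bn$-abelian, and $\bp_\bn$-abelianness is precisely the hypothesis in \cite[Theorem 3.1.14]{Sak} which guarantees that the invariant state space is a simplex. This is the only place where the specific nature of our action enters; everything else is abstract.

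Next I would invoke the general Choquet--Meyer machinery. On any weakly-$*$ compact convex set $K$, every point is the barycenter of at least one probability measure that is maximal in the Choquet ordering; what singles out a simplex among all compact convex sets is exactly that this maximal representing measure is \emph{unique}. Applying this with $K=\cs^{\bp_\bn}(\ga)$ and the given $\om\in K$, one obtains a unique maximal measure $\m$ with barycenter $\om$, and by the very definition of barycenter this yields $\om(a)=\int_{K}\psi(a)\,\di\m(\psi)$ for all $a\in\ga$, which is formula \eqref{bar} once the support is described.

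The delicate point, and really the only one, is the precise sense in which $\m$ lives on the extreme boundary $\ce(\cs^{\bp_\bn}(\ga))$. Since $\ga$ need not be separable, $K$ need not be metrizable, and then the extreme boundary may fail to be a Borel subset of $K$; consequently one cannot assert that $\m$ is supported on $\ce(\cs^{\bp_\bn}(\ga))$ in the literal measure-theoretic sense. The correct formulation, which the non-metrizable (Bishop--de Leeuw) form of the decomposition theorem delivers for maximal measures, is that $\m$ is \emph{pseudo-supported} on the extreme boundary, i.e.\ it assigns zero mass to every Baire set disjoint from $\ce(\cs^{\bp_\bn}(\ga))$. This is exactly the content appealed to in \cite[p.~322]{BR1}, and it is what converts the abstract maximality and uniqueness of $\m$ into the statement of the proposition. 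I therefore expect the main care to lie not in any estimate but in invoking the non-metrizable theory in the right form, so that the uniqueness coming from the simplex property and the pseudo-support coming from maximality are combined correctly.
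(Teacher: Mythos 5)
Your proposal is correct and follows essentially the same route as the paper: the paper derives the proposition, without further computation, from $\bp_\bn$-abelianness (Proposition \ref{symmquasi}(2)) via \cite[Theorem 3.1.14]{Sak} to conclude that $\cs^{\bp_\bn}(\ga)$ is a Choquet simplex, and then appeals to the barycentric decomposition theory as presented in \cite{BR1}, page 322, for the unique maximal measure pseudo-supported on $\ce(\cs^{\bp_\bn}(\ga))$. Your additional care about the non-metrizable (Bishop--de Leeuw) formulation of pseudo-support is exactly the content the paper's citation implicitly carries, so nothing is missing.
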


We recall that with any state $\om$ on a quasi-local algebra $\ga$ it is possible to associate a von
Neumann algebra $\gz_\om^\perp\subset \cb(\ch_\om)$ defined as
$$\gz_\om^\perp=\bigcap_{n=1}^\infty\underset{I\in \cf_n}{\bigvee} \pi_\om(\ga(I))''\, ,$$
where $\cf_n$ collects all the finite subsets $I\subset \bn$ such that $I\subset\{n, n+1, \ldots\}$.
This algebra is commonly known as the tail algebra of the state $\om$, although in
quantum statistical mechanics is typically referred to as the algebra at infinity, see also \cite[Definition 2.6.4] {BR1}.
The tail algebra of an ergodic symmetric state is shown to be trivial below.

\begin{prop}\label{tailtriv}
Let $\a$ be a local action of $\bp_\bn$ on a quasi-local $C^*$-algebra $\ga$.
The tail algebra $\gz_\om^\perp$ of any $\om$ in $\ce(\cs^{\bp_\bn}(\ga))$ is trivial.
\end{prop}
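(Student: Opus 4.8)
The plan is to show that the tail algebra $\gz_\om^\perp$ is contained in $\bc\,\id$ by exploiting the strong clustering property that ergodicity provides through Theorem \ref{extremelocal}. First I would take an arbitrary $T\in\gz_\om^\perp$; since the tail algebra is the intersection over $n$ of the von Neumann algebras generated by local algebras supported in $\{n,n+1,\dots\}$, the operator $T$ can be approximated in the strong operator topology by elements of $\pi_\om(\ga(I))''$ with $I$ arbitrarily far out. The key point I want to extract from the tail condition is that $T$ is \emph{asymptotically local at infinity}: for every localized $a\in\ga(J)$ with $J$ finite, and for every $\varepsilon>0$, one can find a suitable permutation $\s\in\bp_\bn$ (built from the shifts $\s_n$ of \eqref{sigman}) that moves the support $J$ entirely past the region where $T$ is essentially supported, so that $\pi_\om(\a_\s(a))$ nearly commutes with $T$ in the relevant matrix elements against $\xi_\om$.

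The core computation I would carry out is to evaluate $\langle T\pi_\om(a)\xi_\om,\pi_\om(b^*)\xi_\om\rangle$ for localized even $a,b$ and compare it with $\langle T\xi_\om,\xi_\om\rangle\,\om(ab)$. Using $\a$-invariance of $\om$ and the fact (from the proof of Theorem \ref{extremelocal}, via the constant-sequence argument) that $\om(a\,\a_{\s_m}(b))$ stabilizes once the shift $\s_m$ separates the supports, together with strong clustering $\lim_m\om(\a_{\s_m}(a)b)=\om(a)\om(b)$, I would push the localized operator to infinity and use that $T$ lies in the tail (so it commutes with $\pi_\om(\a_{\s_m}(a))$ for $m$ large, the support having been moved outside any fixed window containing the approximants of $T$). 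This should yield
$$
\langle T\pi_\om(a)\xi_\om,\pi_\om(b^*)\xi_\om\rangle=\langle T\xi_\om,\xi_\om\rangle\,\om(ab)=\langle T\xi_\om,\xi_\om\rangle\,\langle\pi_\om(a)\xi_\om,\pi_\om(b^*)\xi_\om\rangle.
$$
Since $\{\pi_\om(a)\xi_\om:a\in\ga_+\}$ is dense in $\ch_\om$ (here evenness of $\om$ from Proposition \ref{symmquasi}(1) guarantees the GNS vector is separating/cyclic for the even part in the way we need), this forces $T=\langle T\xi_\om,\xi_\om\rangle\,\id$ on the relevant dense set, hence $T$ is scalar.

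The main obstacle I anticipate is making rigorous the interchange between the tail condition and the moving-to-infinity argument: one must be careful that the approximation of $T$ by elements of $\bigvee_{I\in\cf_n}\pi_\om(\ga(I))''$ only holds in the strong operator topology, so the commutation $[T,\pi_\om(\a_{\s_m}(a))]\to 0$ must be controlled in the appropriate topology against the fixed vectors $\xi_\om$, $\pi_\om(b^*)\xi_\om$, rather than in norm. A secondary subtlety is handling odd localized elements: by \eqref{oddvanish} the odd part contributes nothing to the states involved, so restricting attention to even $a,b$ loses no generality, but one should verify that the grading does not obstruct the commutation step through the anticommutation relations of Definition \ref{lnet}(iii). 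I expect that once the shifts $\s_n$ are used to displace supports past the growing windows, both difficulties are resolved by the same limiting argument already deployed in Theorem \ref{extremelocal}.
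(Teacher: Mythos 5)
There is a genuine gap, and it sits exactly at the point you flagged and then dismissed. Your final density claim is false: since $\om$ is even, the grading is implemented in the GNS representation by a self-adjoint unitary $V_{\theta,\om}$ fixing $\xi_\om$, so $\ch_\om$ splits as $\ch_+\oplus\ch_-$ with $\overline{\pi_\om(\ga_+)\xi_\om}=\ch_+$ and $\overline{\pi_\om(\ga_-)\xi_\om}=\ch_-$; whenever $\ga$ has nontrivial odd part (the CAR algebra, say), $\ch_-\neq\{0\}$, and the set $\{\pi_\om(a)\xi_\om: a\in\ga_+\}$ is \emph{not} dense in $\ch_\om$. Your computation therefore only yields $T\lceil_{\ch_+}=\langle T\xi_\om,\xi_\om\rangle\,\id$. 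The appeal to \eqref{oddvanish} does not repair this: that equality says $E_\om\pi_\om(a_-)E_\om=0$, a statement about the compression to the subspace of $\bp_\bn$-invariant vectors, which has nothing to do with the grading decomposition of $\ch_\om$. To reach $\ch_-$ you must take $a,b$ odd, and then the commutation of $T$ with $\pi_\om(b)$ is precisely the unresolved issue: Definition \ref{lnet}(iii) only gives \emph{graded} commutation, and the approximants of $T$ from $\bigvee_{I\in\cf_n}\pi_\om(\ga(I))''$ may have odd components that \emph{anti}commute with odd localized elements. What is needed is $\gz^\perp_\om\subset\pi_\om(\ga)'\cap\pi_\om(\ga)''$, which is exactly what the paper imports from \cite[Theorem 2.6.5]{BR1} (where evenness of $\om$ enters); this is a substantive input, not a verification you can wave through.

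A secondary confusion: your moving-to-infinity picture is inverted. $T$ has no finite essential support — it lies in $\bigvee_{I\in\cf_n}\pi_\om(\ga(I))''$ for \emph{every} $n$ — so no permutation can move a finite $J$ ``past'' it. Nor is any needed: for a fixed \emph{even} $a\in\ga(J)$, simply choose $n>\max J$; then $\pi_\om(a)$ commutes with all approximants of $T$ in that region, hence with $T$, with no shift and no limit. Once full commutation (both parities, via \cite[Theorem 2.6.5]{BR1}) is secured, your clustering strategy does close: approximating $T$ weakly by localized $\pi_\om(c_k)$ with $c_k$ supported in $\cf_n$ and applying condition (3) of Theorem \ref{extremelocal} gives $\langle T\pi_\om(c)\xi_\om,\xi_\om\rangle=\langle T\xi_\om,\xi_\om\rangle\,\om(c)$ for every localized $c$, whence $T=\langle T\xi_\om,\xi_\om\rangle\,\id$ by cyclicity of $\xi_\om$ — a route genuinely different from the paper's, which instead shows $T$ commutes with every $U^\om_\s$, deduces $T\xi_\om\in\ch_\om^{\bp_\bn}=\bc\xi_\om$ from extremality and $\bp_\bn$-abelianness, and concludes because $\xi_\om$ is separating for the center. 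Note that both routes, yours included, cannot avoid the centrality of the tail algebra.
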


\begin{proof}
We first show that $\gz_\om^\perp$ is contained in the fixed-point von Neumann
algebra $\{T\in \cb(\ch_\om): U_\s^\om T =TU_\s^\om,\, \s\in\bp_\bn\}$, where
$U_\s^\om$ is the unitary implementator of $\a_\s$ in $\ch_\om$, {\it i.e.} $U_\s^\om \pi_\om(a)\xi_\om=\pi_\om(\a_\s(a))\xi_\om$, $a\in\ga$.
Let $T$ be in $\gz_\om^\perp$ and $\s\in\bp_\bn$. Then there exists $n_o$ such that $\s$ acts trivially on
$\{n_o, n_o+1, \ldots,\}$. In particular, ${\rm ad}_{U_\s^\om}$
acts trivially on $\pi_\om(\ga(I))$ for every finite subset $I$ contained in $\{n_o, n_o+1, \ldots\}$. As
a result, ${\rm ad}_{U_\s^\om}$ still acts trivially on
$\underset{I\in \cf_{n_o}}{\bigvee} \pi_\om(\ga(I))''$. Now since
$T$ sits in particular in $\underset{I\in \cf_{n_o}}{\bigvee} \pi_\om(\ga(I))''$, we must have
$U_\s^\om T =T U_\s^\om$.\\
Furthermore, by Theorem 2.6.5 in \cite{BR1} we also have that $\gz_\om^\perp$ is contained in
in $\pi_\om(\ga)'\cap \pi_\om(\ga)''$. In particular, $\xi_\om$ is separating for $\gz_\om^\perp$.
Indeed, from $\gz_\om^\perp\subset \pi_\om(\ga)'$ we see $\pi_\om(\ga)''\subset(\gz_\om^\perp)'$, hence
$\xi_\om$ is cyclic for $(\gz_\om^\perp)'$.\\
We are ready to reach the conclusion. Indeed, if $T$ lies in $\gz_\om^\perp$, then
$T\xi_\om$ is an invariant vector. By extremality of $\om$ and $\bp_\bn$-abelianness, we then have
$T\xi_\om=\lambda \xi_\om$ for some $\lambda\in\bc$, which means
$T=\lambda 1$ since $\xi_\om$ is separating for such a $T$.
\end{proof}

The next proposition provides a quantum analogue of the well-known  Hewitt and Savage theorem that
the tail and the symmetric $\s$-algebras of an exchangeable
sequence of random variables actually coincide, see \cite{HS}.  For the reader's convenience we recall that
a sequence of random variables is exchangeable if the joint distribution of any finite subset of variables
is invariant under permutations.\\

Given $\om$ in $\cs^{\bp_\bn}(\ga)$, we set
$Z_{\bp_\bn}(\om):= \cz(\pi_\om(\ga)'')\cap U_\om(\bp_{\bn})'$, where
$$U_\om(\bp_{\bn})':=\{T\in \cb(\ch_\om): U_\s^\om T =TU_\s^\om,\, \s\in\bp_\bn\}$$ and
$ \cz(\pi_\om(\ga)''):=\pi_\om(\ga)''\cap \pi_\om(\ga)'$ is the center of $\pi_\om(\ga)''$.
The vector state $\langle\cdot\xi_\om, \xi_\om\rangle$ on $\pi_\om(\ga)''$ will be denoted by
$\varphi_{\xi_\om}$.

\begin{prop}\label{localtail}
The tail algebra $\gz^\perp_\om$ of a symmetric state $\om\in\cs(\ga)$ coincides with
$Z_{\bp_\bn}(\om)$.\\
 Moreover, there exists a unique conditional expectation
$E_\om: \pi_\om(\ga)''\rightarrow \gz^\perp_\om$. This is given by

$$
E_\om(X)=\int^\oplus_{\ce(\cs^{\bp_\bn}(\ga))}\langle X_\psi\xi_\psi, \xi_\psi\rangle {\rm d}\mu(\psi), \quad X\in \pi_\om(\ga)''\,,
$$
where $\mu$  is the measure appearing in \eqref{bar}, and  $X=\int^\oplus_{\ce(\cs^{\bp_\bn}(\ga))} X_\psi {\rm d}\mu(\psi)$.
In addition, $E_\om$ preserves the vector state
$\f_{\xi_\om}$.
\end{prop}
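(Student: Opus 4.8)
The plan is to establish the two inclusions $\gz^\perp_\om\subseteq Z_{\bp_\bn}(\om)$ and $Z_{\bp_\bn}(\om)\subseteq\gz^\perp_\om$ separately, and then to read off the conditional expectation from the ergodic disintegration of $\om$. The inclusion $\gz^\perp_\om\subseteq Z_{\bp_\bn}(\om)$ is already contained in the proof of Proposition \ref{tailtriv}: every $\s\in\bp_\bn$ acts trivially from some index $n_o$ on, so $\ad_{U_\s^\om}$ fixes $\bigvee_{I\in\cf_{n_o}}\pi_\om(\ga(I))''$ and hence $\gz^\perp_\om\subseteq U_\om(\bp_\bn)'$, while Theorem 2.6.5 of \cite{BR1} gives $\gz^\perp_\om\subseteq\pi_\om(\ga)'\cap\pi_\om(\ga)''=\cz(\pi_\om(\ga)'')$. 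Together these yield $\gz^\perp_\om\subseteq\cz(\pi_\om(\ga)'')\cap U_\om(\bp_\bn)'=Z_{\bp_\bn}(\om)$.

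For the reverse inclusion -- the Hewitt--Savage direction, which I expect to be the main obstacle -- the idea is to exploit that the barycentric measure $\mu$ of \eqref{bar} is \emph{subcentral}. Indeed, $Z_{\bp_\bn}(\om)$ is an abelian von Neumann subalgebra of $\pi_\om(\ga)'$ sitting inside $\cz(\pi_\om(\ga)'')$, and, by the simplex structure together with the identification of the ergodic decomposition of a $\bp_\bn$-abelian system (see \cite{BR1}), it is precisely the algebra of diagonalizable operators for the disintegration $\ch_\om=\int^\oplus\ch_\psi\,\di\mu(\psi)$, $\pi_\om=\int^\oplus\pi_\psi\,\di\mu(\psi)$, $\xi_\om=\int^\oplus\xi_\psi\,\di\mu(\psi)$ afforded by $\mu$; that the associated measure coincides with the one in \eqref{bar} is forced by uniqueness of the barycentric measure, since the fibres $\psi$ are ergodic. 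As $\mu$ is subcentral, this disintegration is compatible with the local structure, so each $\pi_\om(\ga(I))''=\int^\oplus\pi_\psi(\ga(I))''\,\di\mu(\psi)$, and consequently, writing $\car_n:=\bigvee_{I\in\cf_n}\pi_\om(\ga(I))''$, one has $\car_n=\int^\oplus\bigl(\bigvee_{I\in\cf_n}\pi_\psi(\ga(I))''\bigr)\di\mu(\psi)$ for every $n$ (the joins are over the countable family $\cf_n$). Now every diagonalizable operator is contained in any direct integral of unital fibre algebras; hence $Z_{\bp_\bn}(\om)\subseteq\car_n$ for all $n$, and therefore $Z_{\bp_\bn}(\om)\subseteq\bigcap_n\car_n=\gz^\perp_\om$. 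I would stress that this route deliberately avoids commuting the direct integral with the intersection $\bigcap_n\car_n$ (which can fail in general): it suffices that the scalar fibres $\lambda(\psi)\,1_{\ch_\psi}$ lie in every $\car_n$ fibrewise, which is automatic from unitality.

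With part (1) in hand, the conditional expectation is obtained from the same disintegration. For $X=\int^\oplus X_\psi\,\di\mu(\psi)\in\pi_\om(\ga)''$, the prescription $E_\om(X):=\int^\oplus\langle X_\psi\xi_\psi,\xi_\psi\rangle\,\di\mu(\psi)$ defines a diagonalizable operator, hence an element of $Z_{\bp_\bn}(\om)=\gz^\perp_\om$; normality, unitality, positivity, idempotence and the $\gz^\perp_\om$-bimodule property are all checked fibrewise, since on each fibre $X_\psi\mapsto\langle X_\psi\xi_\psi,\xi_\psi\rangle$ is the vector state $\psi$ through which diagonalizable operators pass as scalars. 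Because each $\xi_\psi$ is a unit vector with $\psi(a)=\langle\pi_\psi(a)\xi_\psi,\xi_\psi\rangle$, one gets $\f_{\xi_\om}(E_\om(X))=\int^\oplus\langle X_\psi\xi_\psi,\xi_\psi\rangle\,\di\mu(\psi)=\langle X\xi_\om,\xi_\om\rangle=\f_{\xi_\om}(X)$, so $E_\om$ preserves $\f_{\xi_\om}$.

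Finally, uniqueness is a faithfulness argument. Since $\xi_\om$ is separating for $\gz^\perp_\om$ (established in the proof of Proposition \ref{tailtriv}), the restriction $\f_{\xi_\om}\lceil_{\gz^\perp_\om}$ is a faithful normal state; hence any conditional expectation $E$ onto $\gz^\perp_\om$ preserving $\f_{\xi_\om}$ satisfies $\f_{\xi_\om}(E(X)Y)=\f_{\xi_\om}(E(XY))=\f_{\xi_\om}(XY)$ for all $Y\in\gz^\perp_\om$, and faithfulness on the range then determines $E(X)$ uniquely, so $E=E_\om$. I would note that this is the relevant sense of uniqueness (a bare conditional expectation onto an abelian subalgebra of the centre is never unique without a normalisation such as preservation of $\f_{\xi_\om}$), and it is exactly the state-preserving expectation singled out by the formula.
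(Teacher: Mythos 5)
Your first inclusion ($\gz^\perp_\om\subseteq Z_{\bp_\bn}(\om)$, read off from the proof of Proposition \ref{tailtriv}) and your identification of $Z_{\bp_\bn}(\om)$ with the diagonalizable algebra of the ergodic disintegration both match the paper. The problem is the Hewitt--Savage direction, which you rest on the claim that subcentrality of $\mu$ forces $\pi_\om(\ga(I))''=\int^\oplus\pi_\psi(\ga(I))''\,\di\mu(\psi)$ for each single $I$. That claim is false. Subcentrality puts the diagonalizable algebra inside $\pi_\om(\ga)''$, which is why Dixmier's Lemma 8.4.1 applies to the \emph{global} algebra, but it says nothing about $\pi_\om(\ga(I))''$: the right-hand side of your equality always contains every diagonalizable operator, while the left-hand side need not. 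Concretely, take $\ga$ the CAR algebra with $\ga(\{1\})\cong\bm_2(\bc)$ and $\om=\frac12(\times^\bn\rho_1+\times^\bn\rho_2)$ for distinct even states $\rho_1,\rho_2$; then $\pi_\om(\ga(\{1\}))''$ is a four-dimensional copy of $\bm_2(\bc)$ (finite-dimensional, hence weakly closed), whereas $\int^\oplus\pi_\psi(\ga(\{1\}))''\,\di\mu(\psi)\cong\bm_2(\bc)\oplus\bm_2(\bc)$ contains the two-dimensional diagonal. Worse, the corrected statement you actually need, $\car_n=\int^\oplus\bigl(\bigvee_{I\in\cf_n}\pi_\psi(\ga(I))''\bigr)\di\mu(\psi)$, holds if and only if the diagonalizable algebra is contained in $\car_n$, i.e.\ if and only if $Z_{\bp_\bn}(\om)\subseteq\car_n$ --- which is precisely what you set out to prove. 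So your argument is circular exactly at its crux, and the step you deliberately avoided (exchanging the countable intersection with the direct integral) is in fact the harmless one: for countably many decomposable von Neumann algebras each containing the diagonal, that exchange is standard. The paper's proof locates the difficulty correctly: it invokes Theorem 4.4.6 of \cite{BR1} to decompose the joins $\bigvee_{I\in\cf_n}\pi_\om(\ga(I))''$ fibrewise, and only then commutes the intersection and applies Proposition \ref{tailtriv} fibre by fibre; "unitality of the fibres" cannot substitute for this.

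There is a second, independent discrepancy in the uniqueness part. You prove uniqueness only within the class of $\f_{\xi_\om}$-preserving conditional expectations, and you assert parenthetically that a bare conditional expectation onto an abelian subalgebra of the centre is "never unique". The proposition, however, claims outright uniqueness of the conditional expectation $\pi_\om(\ga)''\rightarrow\gz^\perp_\om$, and the paper obtains both existence and this uniqueness from Proposition \ref{large} (the action of $\bp_\bn$ is a large group of automorphisms) combined with Theorem 3.1 of St\o rmer \cite{St67} --- an ingredient your proof never uses, and exactly the hypothesis that defeats your "never unique" heuristic in this setting. As written, you therefore prove a strictly weaker uniqueness statement than the one asserted. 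The formula itself and the $\f_{\xi_\om}$-preservation are fine and agree with the paper (a direct integral of the fibre vector states), but both presuppose the fibrewise decomposition whose justification is the gap described above.
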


\begin{proof}
By applying Theorem 4.4.3 in \cite{BR1} and Proposition 3.1.10 in \cite{Sak}, we see that the abelian von Neumann algebra $Z_{\bp_\bn}(\om)$ decomposes into a direct integral as
$$Z_{\bp_\bn}(\om)=\int^\oplus_{\ce(\cs^{\bp_\bn}(\ga))}\bc 1_{\ch_\psi} {\rm d}\mu(\psi)\cong L^\infty(\ce(\cs^{\bp_\bn}(\ga)), \mu)\, .$$
Because the diagonal operators of $\int^\oplus_{\ce(\cs^{\bp_\bn}(\ga))} \ch_\psi {\rm d}\mu(\psi)$  are contained in $\pi_\om(\ga)''$, we can apply Lemma 8.4.1 in \cite{Dix}
to find that
$$\pi_\om(\ga)''=\int^\oplus_{\ce(\cs^{\bp_\bn}(\ga))}\pi_\psi(\ga)'' {\rm d}\mu(\psi)\,.$$
The above decomposition enables us to identify the tail algebra. Indeed, by Theorem 4.4.6 in \cite{BR1} and
Proposition \ref{tailtriv} one has
\begin{align*}
\gz_\om^\perp&=\bigcap_{n=1}^\infty\underset{I\in \cf_n}{\bigvee} \pi_\om(\ga(I))''\, =
 \bigcap_{n=1}^\infty\bigvee_{I\in \cf_n}\int^\oplus_{\ce(\cs^{\bp_\bn}(\ga))} \pi_\psi(\ga(I))'' {\rm d}\mu(\psi)\\
&=\int^\oplus_{\ce(\cs^{\bp_\bn}(\ga))}   \bigcap_{n=1}^\infty\bigvee_{I\in \cf_n }      \pi_\psi(\ga(I))'' {\rm d}\mu(\psi)\\
&=\int^\oplus_{\ce(\cs^{\bp_\bn}(\ga))} \bc 1_{\ch_\psi}\di\mu(\psi)=Z_{\bp_\bn}(\om) \,.
\end{align*}
Since by Proposition \ref{large} $\bp_\bn$ acts as a large group of automorphisms on $\ga$, Theorem 3.1 in \cite{St67} applies yielding the existence
of a unique conditional expectation, $E_\om$, from $\pi_\om(\ga)''$ onto $Z_{\bp_\bn}(\om)=\gz^\perp_\om$.

All is left to do is prove the formula for $E_\om$. To this end, note that
$F(X):=\int^\oplus_{\ce(\cs^{\bp_\bn}(\ga))}\langle X_\psi\xi_\psi, \xi_\psi\rangle {\rm d}\mu(\psi)$, $X$ in
$\pi_\om(\ga)''$, defines a conditional expectation of $\pi_\om(\ga)''$ onto $\gz^\perp_\om$ as it is the direct integral
of states. By uniqueness one sees that $F=E_\om$. Finally $E_\om$ is seen to preserve the vector state
$\langle\cdot\xi_\om, \xi_\om\rangle$ by means of simple computations, see also \cite[Theorem 5.3]{CrFid}.
\end{proof}

Before we can state our version of de Finetti's theorem tailored to the present context, we need to recall
 what should be meant by conditional independence  for a net
of local algebras $\{\ga(I): I\in\cp_0(\bn)\}$ with respect to a given state $\om$ of the quasi-local algebra
$\ga$.
We start by recalling that for any such state $\om$ the tail algebra $\gz^\perp_\om$ will always be commutative, see \cite[Theorem 2.6.5]{BR1}, and thus expected.
In other words, there will always exist a conditional expectation $F_\om:\pi_\om(\ga)''\rightarrow \gz^\perp_\om $.
As is customary, we will need to work under the hypothesis that such conditional expectation is normal and $\varphi_{\xi_\om}$-preserving, that is
$\langle F_\om[X]\xi_\om, \xi_\om\rangle=\langle X\xi_\om, \xi_\om \rangle$
for any $X\in\pi_\om(\ga)''$.\\
\begin{defin}
The net $\{\ga(I): I\in\cp_0(\bn)\}$ of the local algebras is conditionally independent with respect to a conditional expectation $F_\om$ as above if
for any $I, J\in\cp_0(\bn)$ with $I\cap J=\emptyset$ we have
$$F_\om[XY]=F_\om[X]F_\om[Y]$$
for every $X\in \pi_\om(\ga(I))''\bigvee\gz^\perp_\om$ and $Y\in \pi_\om(\ga(J))''\bigvee\gz^\perp_\om$;
\end{defin}
We are now ready to state our result.
\begin{prop}\label{deFinetti}
Let $\a$ be a local action of $\bp_\bn$ on a net of local $C^*$-algebras with quasi-local algebra $\ga$.
If $\om\in\cs(\ga)$ is symmetric,  then
$E_\om\circ{\rm ad}_{U_\s^\om}=E_\om$ for every $\s\in\bp_\bn$.
Conversely,  $\om\in\cs(\ga)$ is symmetric if  
$F_\om\circ{\rm ad}_{U_\s^\om}=F_\om$,  $\s\in\bp_\bn$, for some 
normal $\varphi_{\xi_\om}$-preserving
conditional expectation $F_\om: \pi_\om(\ga)''\rightarrow \gz^\perp_\om$.\\
Moreover, in this case the net is conditionally
independent  with respect to $E_\om$.
\end{prop}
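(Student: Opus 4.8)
The plan is to treat the two implications and the independence statement separately, leaning throughout on Proposition \ref{localtail}, which identifies $\gz_\om^\perp$ with $Z_{\bp_\bn}(\om)$, presents $E_\om$ as the \emph{unique} conditional expectation onto it, and records its direct-integral form. For the forward implication I would show that, for each $\s\in\bp_\bn$, the composition $G_\s:=E_\om\circ\ad_{U_\s^\om}$ is again a conditional expectation of $\pi_\om(\ga)''$ onto $\gz_\om^\perp$, and then invoke uniqueness. Since $\ad_{U_\s^\om}(\pi_\om(a))=\pi_\om(\a_\s(a))$, the map $\ad_{U_\s^\om}$ is a normal automorphism of $\pi_\om(\ga)''$, and as $\gz_\om^\perp\subseteq U_\om(\bp_\bn)'$ it fixes $\gz_\om^\perp$ pointwise. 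Hence $G_\s$ is a unital, completely positive, norm-one map with range $\gz_\om^\perp$ restricting to the identity there; being a norm-one idempotent onto a subalgebra it is a conditional expectation, so uniqueness forces $G_\s=E_\om$, i.e. $E_\om\circ\ad_{U_\s^\om}=E_\om$.

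For the converse I would pass through the GNS vector. From $\ad_{U_\s^\om}(\pi_\om(a))=\pi_\om(\a_\s(a))$ one has $\om(\a_\s(a))=\varphi_{\xi_\om}(\ad_{U_\s^\om}(\pi_\om(a)))$ for every $a\in\ga$, whence
\[
\om(\a_\s(a))=\varphi_{\xi_\om}\big(F_\om(\ad_{U_\s^\om}(\pi_\om(a)))\big)=\varphi_{\xi_\om}\big(F_\om(\pi_\om(a))\big)=\varphi_{\xi_\om}(\pi_\om(a))=\om(a),
\]
using $\varphi_{\xi_\om}$-preservation of $F_\om$ in the first and third equalities and the intertwining relation $F_\om\circ\ad_{U_\s^\om}=F_\om$ in the second. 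Thus $\om$ is $\a$-invariant. Here the only delicate point is that $\ad_{U_\s^\om}$ be a genuine normal automorphism of $\pi_\om(\ga)''$ implementing $\a_\s$ even before invariance is known; this implementability is implicit in the hypothesis, and it is precisely the auxiliary expectation $F_\om$ that converts it into invariance of $\om$.

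Finally, granting symmetry and taking $F_\om=E_\om$, I would prove conditional independence of the net. Since $\gz_\om^\perp\subseteq\cz(\pi_\om(\ga)'')$ is central and $E_\om$ is a $\gz_\om^\perp$-bimodule map, absorbing central factors reduces the identity $E_\om[XY]=E_\om[X]E_\om[Y]$ for $X\in\pi_\om(\ga(I))''\vee\gz_\om^\perp$ and $Y\in\pi_\om(\ga(J))''\vee\gz_\om^\perp$ (with $I\cap J=\emptyset$) to the case $X\in\pi_\om(\ga(I))''$, $Y\in\pi_\om(\ga(J))''$; normality of $E_\om$ together with Kaplansky density then reduces it further to localized generators $X=\pi_\om(a)$, $Y=\pi_\om(b)$ with $a\in\ga(I)$, $b\in\ga(J)$. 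For these the direct-integral formula of Proposition \ref{localtail} yields
\[
E_\om[\pi_\om(a)\pi_\om(b)]=\int^\oplus_{\ce(\cs^{\bp_\bn}(\ga))}\psi(ab)\,\di\mu(\psi),
\]
whereas
\[
E_\om[\pi_\om(a)]\,E_\om[\pi_\om(b)]=\int^\oplus_{\ce(\cs^{\bp_\bn}(\ga))}\psi(a)\psi(b)\,\di\mu(\psi),
\]
the last identity holding because multiplication in the abelian algebra $Z_{\bp_\bn}(\om)\cong L^\infty(\ce(\cs^{\bp_\bn}(\ga)),\mu)$ is pointwise. The two integrands coincide for every ergodic $\psi$, since each such state is extreme and hence factorizes on disjoint local algebras by the equivalence $(1)\Leftrightarrow(3)$ of Theorem \ref{extremelocal}; therefore the two sides agree and independence follows.

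I expect the conditional-independence step to be the main obstacle: the successive reductions (central absorption, then normality and Kaplansky density down to localized generators) have to be arranged so that the direct-integral representation of $E_\om$ can be applied, after which the whole algebraic content is delivered by the extreme-state factorization of Theorem \ref{extremelocal}. The secondary subtlety, already noted, is ensuring that $\ad_{U_\s^\om}$ makes sense in the converse for an a priori non-symmetric $\om$; once the implementing automorphism is granted, the vector-state computation is immediate.
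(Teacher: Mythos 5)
Your proof is correct, and while the converse and the conditional-independence part coincide in substance with the paper's argument, your forward implication takes a genuinely different route. The paper proves $E_\om\circ{\rm ad}_{U_\s^\om}=E_\om$ by decomposing the implementing unitaries over the ergodic decomposition, $U_\s^\om=\int^\oplus_{\ce(\cs^{\bp_\bn}(\ga))} U_\s^\psi\,\di\mu(\psi)$, and checking the identity fibrewise against the direct-integral formula for $E_\om$; you instead observe that ${\rm ad}_{U_\s^\om}$ is a normal automorphism of $\pi_\om(\ga)''$ fixing $\gz_\om^\perp=Z_{\bp_\bn}(\om)\subseteq U_\om(\bp_\bn)'$ pointwise, so that $E_\om\circ{\rm ad}_{U_\s^\om}$ is a norm-one idempotent onto $\gz_\om^\perp$, hence (by Tomiyama) a conditional expectation, and conclude by the uniqueness clause of Proposition \ref{localtail}. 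Your version is shorter and more conceptual, but it leans on the full strength of that uniqueness statement (ultimately St\o rmer's large-group theorem entering through Proposition \ref{large}) and on the norm-one-projection characterization, whereas the paper's fibrewise computation is self-contained once the decomposition theory of Proposition \ref{localtail} is in place. For the converse you perform exactly the paper's vector-state computation, and you are right to flag explicitly that the existence of the unitaries $U_\s^\om$ is implicit in the hypothesis before symmetry is known — a point the paper passes over in silence. For conditional independence you use the same two ingredients as the paper (the direct-integral formula for $E_\om$ and the factorization of extreme states on disjointly localized algebras from Theorem \ref{extremelocal}), merely running the reductions in the opposite order: you absorb the central factors first and then descend by normality and Kaplansky density to localized generators, while the paper starts from localized elements, extends by density, and only then handles central coefficients; the difference is immaterial, and your a.e.\ use of extremality of $\psi$ under the integral is the same (pseudo-supported) step the paper itself takes.
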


\begin{proof}
Suppose that $\om$ is a symmetric  state. Then by Proposition \ref{localtail}, the tail algebra is given by
$\gz^\perp_\om=\int^\oplus_{\ce(\cs^{\bp_\bn}(\ga))}\bc 1_{\ch_\psi} {\rm d}\mu(\psi) $ and the unique
conditional expectation $E_\om: \pi_\om(\ga)''\rightarrow\gz^\perp_\om$ decomposes as
$E_\om(X)=\int^\oplus_{\ce(\cs^{\bp_\bn}(\ga))}\langle X_\psi\xi_\psi, \xi_\psi\rangle {\rm d}\mu(\psi)$  for every $X\in \pi_\om(\ga)''$. We observe that for any $\s\in\bp_\bn$ the unitary $U_\s^\om$ decomposes into a direct integral as well. More precisely, $U_\s^\om=\int^\oplus_{\ce(\cs^{\bp_\bn}(\ga))} U_\s^\psi\di\mu(\psi)$, where $U_\s^\psi$ is the unitary acting on $\ch_\psi$ as $U_\s^\psi \pi_\psi(x)\xi_\psi=\pi_\psi(\alpha_\s(x))\xi_\psi$, $x\in\ga$. Using this decomposition of $U_\s^\om$, it is now straightforward to check that
$E_\om\circ{\rm ad}_{U_\s^\om}=E_\om$, for every $\s\in\bp_\bn$. \\
The converse implication follows by direct computation. Indeed, let
$F_\om: \pi_\om(\ga)''\rightarrow\gz^\perp_\om$ be a conditional expectation 
such that $F_\om\circ{\rm ad}_{U_\s^\om}=F_\om$ for every $\s\in\bp_\bn$.
 For $a\in\ga$ and $\s\in\bp_\bn$ one then has
\begin{align*}
\om(\a_\s(a))=&\langle \pi_\om(\a_\s(a))\xi_\om, \xi_\om \rangle= \langle F_\om\circ{\rm ad}_{U_\s^\om}(\pi_\om(a))\xi_\om, \xi_\om \rangle\\
=&\langle F_\om(\pi_\om(a))\xi_\om, \xi_\om \rangle=\om(a)\,,
\end{align*}
which shows that $\om$ is symmetric, and thus $F_\om=E_\om$
thanks to Proposition \ref{localtail}.

As for conditional independence, fix now $I_1, I_2\subset\bn$ finite subsets with $I_1\cap I_2=\emptyset$, and for $i=1, 2$ take
$X_i\in \pi_\om(\ga(I_i))''\bigvee\gz^\perp_\om$. We need to show that
$E_\om(X_1X_2)= E_\om(X_1)E_\om(X_2)$. To this end, we start by considering
two localized elements, that is $X_i\in \pi_\om(\ga(I_i))$, $i=1, 2$, with $I_1\cap I_2=\emptyset$.
In this case, by using Proposition \ref{localtail} and (3) in the statement of  Theorem \ref{extremelocal} we have
\begin{align*}
&E_\om(X_1 X_2)\\
&=\int^\oplus_{\ce(\cs^{\bp_\bn}(\ga))}\langle X_{1, \psi}X_{2, \psi}\,\xi_\psi, \xi_\psi\rangle 1_{\ch_\psi} {\rm d}\mu(\psi)\\
&=\int^\oplus_{\ce(\cs^{\bp_\bn}(\ga))} \langle X_{1, \psi}\,\xi_\psi, \xi_\psi\rangle  \langle X_{2, \psi}\,\xi_\psi, \xi_\psi\rangle 1_{\ch_\om} \di\mu(\psi)\\
&=\int^\oplus_{\ce(\cs^{\bp_\bn}(\ga))} \langle X_{1, \psi}\,\xi_\psi, \xi_\psi\rangle 1_{\ch_\om}\di\mu(\psi) \int^\oplus_{\ce(\cs^{\bp_\bn}(\ga))}\langle X_{2, \psi}\,\xi_\psi, \xi_\psi\rangle 1_{\ch_\om} \di\mu(\psi)\\
&=E_\om(X_1) E_\om(X_2)\, .
\end{align*}
Since $E_\om$ is a normal conditional expectation, by density the  above equality still holds
for $X_i\in\pi_\om(\ga(I_i))''$, $i=1, 2$.\\
We are now ready to deal with the general case. As $\gz^\perp_\om$ is contained in the center of
$\pi_\om(\ga)''$, we may assume that $X_i$, $i=1, 2$, is of the form
$X_i=\sum_{j\in F} T^i_j  C^i_j$, where $F$ is a finite set, $\{T^i_j: j\in F\}\subset\pi_\om(\ga(I_i))''$ and
$\{C^i_j: j\in F\}\subset\gz^\perp_\om$ for $i=1, 2$. Since
$X_1X_2=\sum_{j, l\in F}T^1_j T^2_l C^1_j C^2_l$, we find
\begin{align*}
E_\om(X_1X_2)&= \sum_{j, l\in F} E_\om(T^1_j T^2_l)C^1_jC^2_l= \sum_{j, l\in F} E_\om(T^1_j) E_\om (T^2_l)C^1_jC^2_l \\
&=\sum_{j\in F}E_\om(T^1_j)C^1_j\sum_{l\in F}E_\om(T^2_l)C^2_l=E_\om(X_1)E_\om(X_2)\,
\end{align*}
and the proof is complete.
\end{proof}

If  more assumptions are made on the structure of the net of the local algebras, the global invariance condition
$E_\om\circ{\rm ad}_{U_\s^\om}=E_\om$ can be recast in a seemingly weaker way.
To this end, from now on we will assume that the net $\{\ga(I): I\in\cp_0(\bn)\}$
is {\it additive}\footnote{The terminology is borrowed from algebraic quantum field theory, see Definition 4.13 in \cite{A}.}, namely that $\ga(I\cup J)= C^*(\ga(I), \ga(J))$  for every $I, J\in\cp_0(\bn)$.
In particular, for any finite subset $I\subset\bn$ we have
$\ga(I)= C^*(\ga(\{i\}): i\in I)$.\\
In  the present context Theorem \ref{deFinetti} can be stated as follows.

\begin{prop}\label{deFinetti2}
Let $\a$ be a local action of $\bp_\bn$ on an additive net of local $C^*$-algebras.
A  state $\om\in\cs(\ga)$ on the quasi-local algebra $\ga$  is symmetric if and only if:
\begin{itemize}
\item [(i)] the local algebras are conditionally independent w.r.t. $E_\om$;
\item [(ii)] for every $i\in\bn$, $E_\om[\pi_\om(\a_\s(a))]=E_\om[\pi_\om(a)], \, a\in\ga(\{i\}), \s\in\bp_\bn$.
\end{itemize}
\end{prop}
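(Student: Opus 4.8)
The plan is to prove Proposition~\ref{deFinetti2} by leveraging the already-established Proposition~\ref{deFinetti} together with the additivity hypothesis on the net. The key observation is that Proposition~\ref{deFinetti} already furnishes the full equivalence between symmetry and the pair of conditions ``$E_\om\circ{\rm ad}_{U_\s^\om}=E_\om$ for all $\s$'' plus conditional independence, so what remains is purely to show that the seemingly weaker single-site invariance condition $(ii)$, when combined with additivity, upgrades to the global invariance $E_\om\circ{\rm ad}_{U_\s^\om}=E_\om$ on all of $\pi_\om(\ga)''$. Thus the entire content of this proposition is a bootstrapping argument: from invariance on the generators $\ga(\{i\})$ to invariance on the whole quasi-local algebra.

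First I would prove the forward direction. If $\om$ is symmetric, then by Proposition~\ref{deFinetti} we immediately get both the global invariance $E_\om\circ{\rm ad}_{U_\s^\om}=E_\om$ (which trivially restricts to condition $(ii)$ on each $\ga(\{i\})$) and conditional independence of the net with respect to $E_\om$, which is exactly condition $(i)$. So the forward implication is essentially a citation of Proposition~\ref{deFinetti}, with no extra work needed. The substance lies entirely in the converse.

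For the converse, assume $(i)$ and $(ii)$ hold; the goal is to recover symmetry, for which by Proposition~\ref{deFinetti} it suffices to produce a normal $\f_{\xi_\om}$-preserving conditional expectation $F_\om$ onto $\gz^\perp_\om$ satisfying $F_\om\circ{\rm ad}_{U_\s^\om}=F_\om$ for every $\s$. The natural candidate is $E_\om$ itself. The strategy is to show that $E_\om\circ{\rm ad}_{U_\s^\om}=E_\om$ on a dense subalgebra and then extend by normality. By additivity, $\pi_\om(\ga(I))''$ is generated by the single-site algebras $\pi_\om(\ga(\{i\}))''$, $i\in I$, so finite products $\pi_\om(a_{i_1})\cdots\pi_\om(a_{i_k})$ with $a_{i_j}\in\ga(\{i_j\})$ and distinct indices span a weakly dense subset of $\pi_\om(\ga)''$. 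On such a product, one uses conditional independence $(i)$ to factorize $E_\om$ across the distinct sites, namely $E_\om(\pi_\om(a_{i_1})\cdots\pi_\om(a_{i_k}))=E_\om(\pi_\om(a_{i_1}))\cdots E_\om(\pi_\om(a_{i_k}))$, where the factors commute because $\gz^\perp_\om$ is abelian. Applying ${\rm ad}_{U_\s^\om}$ to the product permutes the sites (since $\a_\s(\ga(\{i\}))=\ga(\{\s(i)\})$ by the local action axiom), and then condition $(ii)$ guarantees that each factor $E_\om(\pi_\om(\a_\s(a_{i_j})))=E_\om(\pi_\om(a_{i_j}))$ is unchanged. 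Because the $\gz^\perp_\om$-valued factors commute, reordering the product after the permutation does not alter the value, so $E_\om({\rm ad}_{U_\s^\om}(X))=E_\om(X)$ on these generators.

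The main obstacle I expect is the careful handling of sites on which $\s$ and the product overlap versus those where they do not, and in particular making the factorization-permutation-refactorization step fully rigorous when the index sets before and after applying $\s$ differ. One must ensure that conditional independence genuinely applies after permutation (the images $\{\s(i_j)\}$ are again distinct singletons, so this is fine), and that the single-site invariance $(ii)$ is invoked with the correct permutation on each factor. A secondary technical point is the passage from the weakly dense subalgebra of finite products to all of $\pi_\om(\ga)''$: this requires normality of $E_\om$ and the weak continuity of ${\rm ad}_{U_\s^\om}$, both of which are available. Once $E_\om\circ{\rm ad}_{U_\s^\om}=E_\om$ is established on the whole algebra, Proposition~\ref{deFinetti} delivers symmetry of $\om$, completing the converse and hence the equivalence.
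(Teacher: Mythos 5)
Your proposal is correct and follows essentially the same route as the paper's proof: reduce via Proposition~\ref{deFinetti} to establishing the global invariance $E_\om\circ{\rm ad}_{U_\s^\om}=E_\om$, verify it on products of single-site elements by factorizing with conditional independence, applying (ii) factorwise, and refactorizing over the permuted (still distinct) sites, then extend by normality and density. The only detail the paper makes explicit that you elide is that products with repeated indices are reduced to the distinct-index case by means of the (anti)commutation relations in (iii) of Definition~\ref{lnet}; also, no reordering of factors is actually needed, since $\a_\s$ preserves the order of the factors and the permuted sites remain distinct.
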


\begin{proof}
By virtue of Theorem \ref{deFinetti} we need only show that (i) and (ii) imply
that $E_\om\circ{\rm ad}_{U_\s^\om}=E_\om$.\\
Since $E_\om$ is a normal conditional expectation, by density
of $\pi_\om(\ga)$ in the bicommutant $\pi_\om(\ga)''$, it is enough to verify
the equality on $\pi_\om(\ga)$. As $\ga$ is in turn the inductive limit of the local algebras, the thesis will be achieved if
we show that for any fixed finite subset $I\subset\bn$ one has $E_\om[\a_\s(\pi_\om(a))]=E_\om[\pi_\om(a)]$ for every
$a\in\ga(I)$.
By additivity there is no loss of generality to assume that $a$ factors into a product as $a=a_{i_1}a_{i_2}\cdots a_{i_n}$,
with  $i_l\neq i_k$ (possible repetitions of the same index at different places are dealt with by means of
(iii) in Definition \ref{lnet}). For any $\s\in\bp_\bn$ we then have:
\begin{align*}
&E_\om[ \pi_\om(a_{i_1}a_{i_2}\cdots a_{i_n})]= E_\om[\pi_\om(a_{i_1})]E_\om[\pi_\om(a_{i_2})]\cdots E_\om[\pi_\om(a_{i_n})] \\
&=E_\om[\pi_\om(\a_\s(a_{i_1}))]E_\om[\pi_\om(\a_\s(a_{i_2}))]\cdots E_\om[\pi_\om(\a_\s(a_{i_n}))] \\
&= E_\om[\pi_\om(\a_\s(a_{i_1}a_{i_2}\cdots a_{i_n}) ]\,,
\end{align*}
which ends the proof.
\end{proof}
It is worth noting that when the quasi-local algebra arises as a quotient of the infinite free
product $\ast_\bn \gb$ of a sample $C^*$-algebra $\gb$, the conditions (i) and (ii) in the statement above return
the usual notion for a (quantum) stochastic process to be conditionally independent and identically distributed with respect to
the tail algebra. Indeed, in this case  the states of the quotient are in a one-to-one correspondence with the stochastic processes on the sample algebra
$\gb$, see {\it e.g.} Theorem 3.4 and Definition 4.1 in \cite{CrFid} or Theorem 2.3 in \cite{CrFid2}.

\section{Processes on infinite graded tensor products}\label{infprod}
In this section, we collect some results on $\bz_2$-graded algebraic structures obtained as tensor products of graded $*$-algebras.
%We first observe that, if not otherwise specified, throughout the paper all the structures we deal with will be taken unital.\\
%If $\gb\subset\ga$ is an inclusion of unital $C^*$-subalgebras with a common unity,
%the unital linear mapping $E:\ga\to \gb$ is called a projection if $E(b)=b$ for all $b\in\gb$,
%and is said to be a $\gb$-bimodule map if $E(ab)=E(a)b$, and $E(ba)=bE(a)$ for all $a\in\ga, b\in\gb$.
%A positive $\gb$-bimodule projection $E$ is called a \emph{conditional expectation}.\\
Consider the $C^*$-algebras $\ga_1$ and $\ga_2$, and denote by $\ga_1\otimes \ga_2$ the algebraic tensor product $\ga_1\odot \ga_2$ with the product and involution given by
$$
(a_1\otimes a_2)(a_1'\otimes a_2'):=a_1a_1'\otimes a_2a_2'\,,\quad (a_1\otimes a_2)^*:=a_1^*\otimes a_2^*\,,
$$
for all $a_1,a_1'\in\ga_1$, $a_2,a_2'\in\ga_2$. Let us denote by $\ga_1\otimes_{\max} \ga_2$ and $\ga_1\otimes_{\min} \ga_2$ the completions of $\ga_1\otimes \ga_2$ with respect to the maximal and minimal $C^*$-cross norm, respectively, see \cite{T1}.\\
%Let $\cs(\ga)$ be the weak*-compact collecting the states on a $C^*$-algebra $\ga$. 
If one takes $\om_1\in\cs(\ga_1)$ and $\om_2\in\cs(\ga_2)$, their product state $\psi_{\om_1,\om_2}\in\cs(\ga_1\otimes_{\min} \ga_2)$ is well defined also on $\ga_1\otimes_{\max} \ga_2$, and consequently the notation $\psi_{\om_1,\om_2}\in\cs(\ga_1\otimes \ga_2)$ will be used in the sequel.\\

Suppose that $(\ga_1,\th_1)$ and $(\ga_2,\th_2)$ are $\bz_2$-graded $*$-algebras, and consider the linear space $\ga_1\odot \ga_2$. In what follows, we recall the definition of the involutive $\bz_2$-graded tensor product, which will be henceforth denoted by
$\ga_1\hat{\otimes} \ga_2$. For homogeneous elements $a_1\in\ga_1$, $a_2\in\ga_2$ and $i,j\in\bz_2$, we set
\begin{eqnarray*}
\begin{split}
\label{ecfps}
&\eps(a_1,a_2):=\left\{\!\!\!\begin{array}{ll}
                      -1 &\text{if}\,\, \partial(a_1)=\partial(a_2)=-1\,,\\
                     \,\,\,\,\,1 &\text{otherwise}\,.
                    \end{array}
                    \right.\\
&\eeps(i,j):=\left\{\!\!\!\begin{array}{ll}
                      -1 &\text{if}\,\, i=j=-1\,,\\
                     \,\,\,\,\,1 &\text{otherwise}\,.
                    \end{array}
                    \right.
\end{split}
\end{eqnarray*}
Given $x,y\in\ga_1\odot\ga_2$ with
\begin{align*}
\begin{split}
\label{xy}
&x:=\oplus_{i,j\in\bz_2}x_{i,j}\in\oplus_{i,j\in\bz_2}(\ga_{1,i}\odot\ga_{2,j})\,,\\
&y:=\oplus_{i,j\in\bz_2}y_{i,j}\in\oplus_{i,j\in\bz_2}(\ga_{1,i}\odot\ga_{2,j})\,,
\end{split}
\end{align*}
 the involution, which by a minor abuse of notation we continue
to denote by $^*$, and  the multiplication on $\ga_1\hat{\otimes} \ga_2$ are defined as (see also \emph{e.g.} \cite{CDF})
\begin{equation*}
\label{prstc}
\begin{split}
x^*:=&\sum_{i,j\in\bz_2}\eeps(i,j)x_{i,j}^*\, ,\\
xy:=&\sum_{i,j,k,l\in\bz_2}\eeps(j,k)x_{i,j}{\bf}y_{k,l}\,.
\end{split}
\end{equation*}

The $*$-algebra thus obtained, in \cite{CDF} referred to as the Fermi tensor product of $\ga_1$ and
$\ga_2$, also carries a $\bz_2$-grading. This 
is induced by the $*$-automorphism $\th=\th_1\hat{\otimes} \th_2$, whose action on  simple tensors is given by
\begin{equation}
\label{aupf}
\th_1\hat{\otimes} \th_2 (a_1\hat{\otimes} a_2):=\th_1(a_1)\hat{\otimes} \th_2(a_2)\,,\quad a_1\in\ga_1\,,\,\, a_2\in\ga_2\,,
\end{equation}
where $a_1\hat{\otimes} a_2$ is nothing but  $a_1\otimes a_2$ thought of
as an element of the $\bz_2$-graded $*$-algebra $\ga_1\hat{\otimes} \ga_2$,
since
$\ga_1\hat{\otimes} \ga_2=\ga_1\otimes \ga_2\,$ as linear spaces.
As of now, we will use $a_1\otimes a_2$ and $a_1\hat{\otimes} a_2$ interchangeably
when no confusion can occur.\\
The even and odd part of the Fermi product are respectively
\begin{equation*}
\label{picm}
\begin{split}
\big(\ga_1\hat{\otimes} \ga_2\big)_+=&\big(\ga_{1,+}\odot\ga_{2,+}\big)\oplus\big(\ga_{1,-}\odot\ga_{2,-}\big)\,,\\
\big(\ga_1\hat{\otimes} \ga_2\big)_-=&\big(\ga_{1,+}\odot\ga_{2,-}\big)\oplus\big(\ga_{1,-}\odot\ga_{2,+}\big)\,.
\end{split}
\end{equation*}

\medskip
The construction of the algebraic Fermi tensor product can of course be performed with an arbitrary
number $n$ of $C^*$-algebras $\ga_i$, $i=1, 2, \ldots, n$. As usual, as a linear space
$\ga_1\hat{\otimes} \ga_2\hat{\otimes}\cdots \hat{\otimes} \ga_n$ is given
by the algebraic tensor product $\ga_1\odot \ga_2\, \odot\,\cdots \odot\, \ga_n$. Product
and involution can be defined by carefully exploiting the associativity
of the usual tensor product. 
%This can be easily done by induction on $n$. To this end, it is in fact
%enough to limit oneself to considering $n=3$. In this case,
%$\ga_1\hat{\otimes} \ga_2\hat{\otimes}\ga_3$ can equivalently
%be defined as either or $(\ga_1\hat{\otimes} \ga_2)\hat{\otimes}\ga_3$
%or $\ga_1\hat{\otimes} (\ga_2\hat{\otimes}\ga_3)$.
%Indeed, if $a_i, b_i\in\ga_i$, $i=1, 2, 3$ are homogeneous elements, then on the one hand we have
%\begin{equation*}
%[(a_1\,\hat{\otimes}a_2)\,\hat{\otimes}a_3][(b_1\,\hat{\otimes}b_2)\,\hat{\otimes}b_3]=
%\eps(a_3, b_1\,\hat{\otimes}b_2)\eps(a_2, b_1)a_1b_1\hat{\otimes}a_2b_2\,\hat{\otimes}a_3b_3
%\end{equation*}
%and on the other
%\begin{equation*}
%[a_1\,\hat{\otimes}(a_2\,\hat{\otimes}a_3)][b_1\,\hat{\otimes}(b_2\,\hat{\otimes}b_3)]=
%\eps(a_2\,\hat{\otimes}a_3, b_1)\eps(a_3, b_2)a_1b_1\hat{\otimes}a_2b_2\,\hat{\otimes}a_3b_3
%\end{equation*}
%an the two expressions agree since
%$$\eps(a_3, b_1\,\hat{\otimes}b_2)\eps(a_2, b_1)=\eps(a_2\,\hat{\otimes}a_3, b_1)\eps(a_3, b_2),$$
%as can be directly verified.\\
The $*$-algebra $\ga_1\hat{\otimes} \ga_2\hat{\otimes}\cdots \hat{\otimes} \ga_n$
can be turned into a $\bz_2$-graded algebra by the grading $\theta^{(n)}:= \theta_1\hat{\otimes} \theta_2\hat{\otimes}\cdots \hat{\otimes} \theta_n$ defined as in \eqref{aupf}.\\

For $\om_i\in\cs(\ga_i)$, $i=1,2$, the state $\psi_{\om_1,\om_2}$ has a counterpart in $\ga_1\hat{\otimes} \ga_2$ by means of the product functional $\om_1\times \om_2$, defined as usual by
$$
\om_1\times \om_2\bigg(\sum_{j=1}^n a_{1,j}\hat{\otimes} a_{2,j}\bigg):=\sum_{j=1}^n \om_1(a_{1,j})\om_2(a_{2,j})\,,
$$
for all $\sum_{j=1}^n a_{1,j} \hat{\otimes} a_{2,j}\in \ga_1\hat{\otimes} \ga_2$. Contrary to the case of a trivial grading, the functional defined above is not necessarily positive, unless at least one between $\om_1$ and $\om_2$ is even, see
\cite[Proposition 7.1]{CDF}. More in general, given $\om_i\in\cs(\ga_i)$, we denote by 
$\om_1\times\om_2\times\cdots\times\om_n$
the linear functional on $\ga_1\hat{\otimes} \ga_2\hat{\otimes}\cdots \hat{\otimes} \ga_n$ defined on simple tensors as
\begin{equation*}
\om_1\times\om_2\times\cdots\times\om_n(a_1\hat{\otimes} a_2\hat{\otimes}\cdots \hat{\otimes} a_n):=\om_1(a_1)\om_2(a_2)\cdots\om_n(a_n)
\end{equation*}
for every $a_i\in\ga_i$. The following proposition is a straightforward generalization of \cite[Proposition 2.6]{CRZ}.
 \begin{prop}
Let $(\ga_i,\th_i)$ be graded $C^*$-algebras, $i=1, 2, \ldots n$. Given
$\om_i\in\cs(\ga_i)$, then their product state
 $\om_1\times\om_2\times\cdots\times\om_n$ is positive  if and only if at least
$n-1$ of them are even. Moreover, $\om_1\times\om_2\times\cdots\times\om_n$  is even
if and only if all states $\om_1, \om_2, \ldots, \om_n$ are even.
\end{prop}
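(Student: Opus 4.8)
The plan is to dispatch the evenness assertion first, since the positivity argument will feed on it, and then to prove the two implications of the positivity statement separately. For evenness, I would argue directly: if each $\om_i$ is even then for every simple tensor one has $\om_1\times\cdots\times\om_n(\theta^{(n)}(a_1\hat{\otimes}\cdots\hat{\otimes} a_n))=\prod_{i}\om_i(\theta_i(a_i))=\prod_i\om_i(a_i)$, so the product is even; conversely, if some $\om_k$ is not even, choose an odd $a_k\in\ga_{k,-}$ with $\om_k(a_k)\neq 0$ and evaluate the product on the odd element $1\hat{\otimes}\cdots\hat{\otimes} a_k\hat{\otimes}\cdots\hat{\otimes} 1$ sitting in slot $k$: the value is $\om_k(a_k)\neq 0$, so the product is not even. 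This proves the last sentence of the statement.

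For the sufficiency half of the positivity claim I would induct on $n$, the cases $n=1$ being trivial and $n=2$ being exactly \cite[Proposition 7.1]{CDF}. By associativity of the Fermi product, write $\ga_1\hat{\otimes}\cdots\hat{\otimes}\ga_n=\cb\hat{\otimes}\ga_n$ with $\cb:=\ga_1\hat{\otimes}\cdots\hat{\otimes}\ga_{n-1}$, so that $\om_1\times\cdots\times\om_n=\Psi\times\om_n$ where $\Psi:=\om_1\times\cdots\times\om_{n-1}$. Assuming at least $n-1$ of the states are even, i.e. at most one is not even, there are two cases. If $\om_n$ is even, then at least $n-2$ of $\om_1,\dots,\om_{n-1}$ are even, so $\Psi$ is a state by the inductive hypothesis and $\Psi\times\om_n$ is positive by the $n=2$ result. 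If $\om_n$ is not even, then all of $\om_1,\dots,\om_{n-1}$ are even, whence $\Psi$ is a state (inductive hypothesis) and is moreover even (by the evenness characterization just proved), so $\Psi\times\om_n$ is again positive by the $n=2$ result.

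For necessity I would argue by contraposition, producing an explicit witness to non-positivity whenever at least two factors, say $\om_i$ and $\om_j$ with $i<j$, fail to be even. Since the odd parts are $*$-closed, I can choose self-adjoint odd elements $a_i\in\ga_{i,-}$ and $a_j\in\ga_{j,-}$ with $t_i:=\om_i(a_i)$ and $t_j:=\om_j(a_j)$ nonzero reals. Let $u$ and $v$ denote the single-slot elements carrying $a_i$ and $a_j$ and the identity elsewhere; they are self-adjoint, odd and supported on disjoint slots, so the grading rule forces $uv=-vu$. Putting $c:=u+iv$ and $\Phi:=\om_1\times\cdots\times\om_n$, one computes $c^*c=u^2+v^2+2i\,uv$, so that $\Phi(c^*c)=\om_i(a_i^2)+\om_j(a_j^2)+2i\,\Phi(uv)$ with $\Phi(uv)=\pm t_i t_j\neq 0$; hence $\Phi(c^*c)$ has nonzero imaginary part and $\Phi$ is not positive.

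The step I expect to be the most delicate is the necessity direction, and specifically the grading-sign bookkeeping behind it: it is exactly the anticommutation $uv=-vu$ (together with the self-adjointness of $u$ and $v$, both of which hinge on the sign $\eeps$ in the graded involution and multiplication) that makes the imaginary part of $\Phi(c^*c)$ survive instead of cancelling. Once associativity and the two-factor criterion of \cite{CDF} are granted, the remaining steps are routine.
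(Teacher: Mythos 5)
Your proof is correct and is essentially the paper's own approach: the paper disposes of the proposition with the single line ``a simple induction on $n$'' anchored at the two-factor case of \cite[Proposition 7.1]{CDF} (equivalently \cite[Proposition 2.6]{CRZ}), which is exactly the skeleton you flesh out. Your only deviation is cosmetic rather than structural: you prove necessity by a direct witness $c=u+iv$ with anticommuting self-adjoint odd single-slot elements (the same device that underlies the cited two-factor ``only if''), instead of threading the two-factor equivalence through the induction, and both your sign bookkeeping ($u^*=u$, $uv=-vu$, $\Phi(c^*c)=\om_i(a_i^2)+\om_j(a_j^2)+2it_it_j$) and your case split in the inductive step check out.
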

\begin{proof}
A simple induction on $n$.
\end{proof}

As in the case with only two factors,  which has been addressed in \cite{CRZ}, the product $\ga_1\hat{\otimes} \ga_2\hat{\otimes}\cdots \hat{\otimes} \ga_n$ will in general admit many $C^*$-completions.
We first consider the minimal completion, namely the one obtained by completing with respect to the norm
$$\|x\|_{\rm min}:=\sup\{\|\pi_\om(x)\|: \om=\om_1\times\om_2\cdots\times\om_n, \, \om_i\in\mathcal{S}_+(\ga_i), \, i=1, \ldots, n\}\,,$$
which we denote by $\ga_1\hat{\otimes}_{\rm min} \ga_2\hat{\otimes}_{\rm min}\cdots \hat{\otimes}_{\rm min} \ga_n$. 
It is
still a $\bz_2$-graded $C^*$-algebra, with the grading obtained by extending $\theta^{(n)}$ to
the minimal completion, {\it cf.} \cite[Proposition 4.7]{CRZ}.\\

Minimal infinite tensor Fermi  products can  be defined through inductive limits.
More precisely, if $\{(\ga_i, \theta_i): i\in\bn\}$ is a countable family of unital $\bz_2$-graded
$C^*$-algebras, then for each $n\in\bn$ we can consider the injective homomorphism
$$\Phi_n:\ga_1\hat{\otimes}_{\rm min} \ga_2\hat{\otimes}_{\rm min}\cdots \hat{\otimes}_{\rm min} \ga_n\rightarrow \ga_1\hat{\otimes}_{\rm min} \ga_2\hat{\otimes}_{\rm min}\cdots \hat{\otimes}_{\rm min} \ga_n\hat{\otimes}_{\rm min} \ga_{n+1}$$
 completely determined by
$$\Phi_n(a_1\hat{\otimes} a_2\hat{\otimes}\cdots \hat{\otimes} a_n)=
a_1\hat{\otimes} a_2\hat{\otimes}\cdots \hat{\otimes} a_n\hat{\otimes}1\,,$$
for every $a_i\in\ga_i$ and $i=1, \ldots, n$, where by a slight abuse of notation
$1$ denotes the unity of $\ga_i$ for every $i\in\bn$.\\
Clearly, $\{\ga_1\hat{\otimes}_{\rm min} \ga_2\hat{\otimes}_{\rm min}\cdots \hat{\otimes}_{\rm min} \ga_n, \Phi_n\}$ is an inductive system of $C^*$-algebras, whose limit we denote by
$\displaystyle{ \hat{\otimes}_{\rm min}^{i\in\bn}\ga_i}$ and call the (minimal) infinite Fermi tensor product.\\
We denote by $\iota_n$ the embedding of $\ga_1\hat{\otimes}_{\rm min} \ga_2\hat{\otimes}_{\rm min}\cdots \hat{\otimes}_{\rm min} \ga_n$ 
into $\hat{\otimes}_{\rm min}^{i\in \bn}\ga_i$. Henceforth, we will often write $a_1\hat{\otimes} a_2\hat{\otimes}\cdots \hat{\otimes} a_n\hat{\otimes}1\,\hat{\otimes}1\,\cdots $ rather than  write
$\iota_n(a_1\hat{\otimes} a_2\hat{\otimes}\cdots \hat{\otimes} a_n)$, as is commonly done in the
literature.\\

Infinite Fermi tensor products provide examples of quasi-local algebras. Here, the net of local
subalgebras is as follows. For every $I=\{i_1, \ldots, i_{|I|}\}$ finite subset of $\bn$, we denote by
$\ga(I)\subset\displaystyle{ \hat{\otimes}_{\rm min}^{i\in\bn}\ga_i}$
the unital $C^*$-subalgebra generated by simple tensors of the type
$$1\hat{\otimes}\cdots \hat{\otimes}a_{i_1}\hat{\otimes}\cdots\hat{\otimes} a_{i_2}\hat{\otimes}\cdots \hat{\otimes} a_{i_{|I|}}\hat{\otimes}1\hat{\otimes}1\cdots$$
when the $a_{i_j}$'s vary in $\ga_{i_j}$, $j=1, \ldots, |I|$.\\

If now $\ga$ is a fixed unital $C^*$-algebra, we denote by $\ga^{(n)}$ the minimal
Fermi tensor product of $\ga$ with itself $n$ times and by
 $\hat{\otimes}_{\rm min}^{\bn}\ga$ the corresponding infinite graded tensor product.
We still denote by $\iota_n:\ga^{(n)}\rightarrow \hat{\otimes}_{\rm min}^{\bn}\ga$ the embeddings of
$\ga^{(n)}$ into $\hat{\otimes}_{\rm min}^{\bn}\ga$.\\
 Finally, note that  $\hat{\otimes}_{\rm min}^{\bn}\ga$ is still
a $\bz_2$-graded $C^*$-algebra, whose grading, which we denote by  $\hat{\otimes}^{\bn}\,\theta$, is
obtained as the inductive limit ot the $\theta^{(n)}$'s.\\

In the following example we show how the CAR algebra can be re-obtained as an infinite Fermi
tensor product.
\begin{exa}\label{CAR}
Our starting data is the $\bz_2$-graded $C^*$-algebra $(\ga, \theta)=(\bm_2(\bc), {\rm ad}(U))$,
where $U$ is the (Pauli) unitary matrix
$
U=  \left(
\begin{array}{rr}
1 & 0 \\
0 &{ -1}
\end{array}
\right)
$. Note that, given $B \in\bm_2(\bc)$, one has $UBU^*=B$ if and only if
$B$ is a diagonal matrix and $UBU^*=-B$
if and only if $B$ is anti-diagonal.\\
We next show that $\hat{\otimes}^{\bn}\bm_2(\bc)$ is
$*$-isomorphic with the CAR algebra and  $\hat{\otimes}^\bn {\rm ad}(U)$
is its usual grading. To this end, set $A:=\left(
\begin{array}{ll}
0 & 1 \\
0 & 0
\end{array}
\right)$. Note that $A$ is odd and $A^2=0,\, A^*A+AA^*=I$.
For every $j\in\bn$, denote by $i_j: \bm_2(\bc)\rightarrow\hat{\otimes}^{\bn}\bm_2(\bc)$
the injective $^*$-homomorphism given by
$$i_j(B)=1\hat{\otimes} 1\hat{\otimes}\cdots \hat{\otimes}\underbrace{B}_{\textrm{j-th place}}\,\hat{\otimes}1\hat{\otimes}1\cdots,\, B\in\bm_2({\bc}).$$
and define $a_j:=i_j(A)$, $j\in\bn$. Since $A$ generates $\bm_2(\bc)$, $\{a_j: j\in\bn\}$ is a set of generators
of the infinite Fermi tensor product of $\bm_2(\bc)$ with itself.
Now the relations $a_ja_k+a_ka_j=0$ and $a_ja_k^*+a_k^*a_j=\delta_{j, k}I$, $j, k\in\bn$, are a straighforward
consequence of the equalities $A^2=0, \,A^*A+AA^*=I$ and of the fact that $A$ is odd.
As the CAR algebra, ${\rm CAR}(\bn)$, is the universal $C^*$-algebra generated by $b_j$'s satisfying the above relations,
we find that there must exist a surjective  $*$-homomorphism
$\Psi:{\rm CAR}(\bn)\rightarrow \hat{\otimes}^{\bn}\bm_2(\bc)$ such that
$\Psi(b_j)=a_j$, for every $j\in\bn$. By simplicity of ${\rm CAR}(\bn)$, $\Psi$ is also injective and so
${\rm CAR}(\bn)\cong\hat{\otimes}^{\bn}\bm_2(\bc)$. Finally, as for the grading, it is
enough to observe that each $a_j$ is odd w.r.t. $\hat{\otimes}^\bn {\rm ad}(U)$.
\end{exa}

If $\om_i\in\cs_+(\ga_i)$ are even states for every $i\in\bn$, then their infinite product,
$\underset{i\in\bn}{\times}\om_i$, is the state on $\hat{\otimes}^{i\in\bn}_{\rm min}\ga_i$
uniquely determined by
$$
\underset{i\in\bn}{\times}\om_i(x_1\, \hat{\otimes}\cdots \hat{\otimes} x_n\,\hat{\otimes} 1\,\hat{\otimes} 1\,\hat{\otimes}\cdots)=\om_1(x_1)\om_2(x_2)\cdots\om_n(x_n)
$$
for every $x_i\in\ga_i$, $i=1, 2, \ldots, n$, and every $n\in\bn$.\\
If $\om_i$ is a fixed even state $\om$ on $\ga$ for each $i\in\bn$,  we then simply denote
by $\times^\bn\om$ the product state $\underset{i\in\bn}{\times}\om_i$ on $\hat{\otimes}^\bn_{\rm min}\ga$ .\\

\medskip

Similarly to what we have seen for $C^*$-algebras, the definition of the $\bz_2$-graded tensor product of two Hilbert spaces, as given in \cite{CRZ}, can easily be extended to an arbitrary number of spaces.
We start by recalling that a $\bz_2$-graded Hilbert space is a pair $(\ch, U)$, where $\ch$ is a
(complex) Hilbert space and $U$ a self-adjoint unitary acting on $\ch$. 
In such a situation, $\ch$ decomposes into an orthogonal direct sum of the type
$$\ch=\ch_+\oplus\ch_-\, ,$$
where $\ch_+:= {\rm Ker}(I-U)$ and $\ch_-:= {\rm Ker}(I+U)$.
As usual, vectors in $\ch_+$ ($\ch_-$) are called {\it even} ({\it odd}) vectors.
Even or odd vectors are collectively referred to as homogeneous vectors.\\
The Hilbert tensor product $\ch_1\otimes\ch_2$ of two
 $\bz_2$-graded Hilbert spaces $(\ch_1, U_1)$ and $(\ch_2, U_2)$
will always be conceived of as a graded Hilbert space, with the natural grading
associated with $U_1\otimes U_2$.

We also recall that infinite  tensor products of Hilbert spaces can be defined as direct limits of finite products
 following a construction due to von Neumann, which we rather quickly sketch for convenience.
We first observe that, given two  Hilbert spaces $\ch_1$ and $\ch_2$, for any
unit vector $\xi\in \ch_2$  the map $\ch_1\ni x\rightarrow x{\otimes} \xi\in\ch_1\,{\otimes} \ch_2$ is isometric. Given a sequence $\{(\ch_i, \xi_i): i\in\bn\}$ of Hilbert spaces, where for each $i\in\bn$ $\xi_i\in\ch_i$ is a unit vector, we can consider the isometries $\Phi_n: \ch_1{\otimes} \ch_2{\otimes}\cdots {\otimes} \ch_n\rightarrow \ch_1{\otimes} \ch_2{\otimes}\cdots {\otimes} \ch_{n+1}$ given by
$$\Phi_n(x_1{\otimes} x_2{\otimes}\cdots {\otimes} x_n):=x_1{\otimes} x_2{\otimes}\cdots{\otimes} x_n\,{\otimes}\xi_{n+1}.$$
The infinite tensor product of the Hilbert spaces $\ch_i$ with respect to the sequence $\boldsymbol{\xi}=\{\xi_i\}_{i\in\bn}$ is by definition
the inductive limit of the direct system $\{(\ch_1{\otimes} \ch_2{\otimes}\cdots {\otimes} \ch_n, \Phi_n): n\in\bn\}$, and will be denoted
by $\underset{\boldsymbol{\xi}}{{\otimes}}\, \ch_i$.
For each integer $n$, we denote by $\iota_n$ the isometric embedding of $\ch_1{\otimes} \ch_2{\otimes}\cdots {\otimes} \ch_n$ into $\underset{\boldsymbol{\xi}}{{\otimes}}\, \ch_i$. Note that $\iota_{n+1}\circ\Phi_n=\iota_n$ for every $n$ by definition
of inductive limit. Instead of $\iota_n(x_1{\otimes} x_2{\otimes}\cdots {\otimes} x_n)$ 
we will every so often write $x_1{\otimes} x_2{\otimes}\cdots {\otimes} x_n{\otimes}\xi_{n+1}{\otimes}\xi_{n+2}{\otimes}\cdots$. 
When all the Hilbert spaces $\ch_i$ are graded, say by self-adjoint unitaries $U_i\in\cb(\ch_i)$, then the infinite product
$\underset{\boldsymbol{\xi}}{{\otimes}}\, \ch_i$ can be equipped with
a $\bz_2$-grading through the self-adjoint unitary ${\otimes}_{n\in\bn}U_n$, whose definition is deferred to  Section \ref{twistedcomm}.\\

In \cite{CRZ} the Fermi product of
grading-equivariant representations was defined for two representations. Obviously, the construction
given there continues to work with an arbitrary number $n$ of representations.
In other terms, if $\pi_i: \ga_i\rightarrow \cb(\ch_i)$, with $i=1, 2, \ldots, n$, are grading-equivariant
representations acting on the $\bz_2$-graded Hilbert spaces $(\ch_i, U_i)$, then it is possible to define a representation
$\pi$ of the Fermi product $\ga_1\hat{\otimes} \ga_2\hat{\otimes}\cdots \hat{\otimes} \ga_n$ acting on the Hilbert space $\ch:= \ch_1{\otimes} \ch_2{\otimes}\cdots{\otimes} \ch_n$ as
$$\pi(a_1\hat{\otimes} a_2\hat{\otimes}\cdots \hat{\otimes} a_n):=
\pi_1(a_1)\hat{\otimes} \pi_2(a_2)\hat{\otimes}\cdots \hat{\otimes} \pi_n(a_n)$$
for every $a_i\in\ga_i$, $i=1, 2, \ldots, n$. Note that in the formula above
the symbol $\hat{\otimes}$ is actually used to denote the Fermi tensor product of two or more
operators acting on (possibly different) Hilbert spaces, as defined in \cite{CRZ}.
For convenience, we recall the definition with two (homogeneous) operators $T_i\in\cb(\ch_i)$, $i=1, 2$:
$$T_1\hat{\otimes} T_2 (\xi_1\otimes\xi_2):=\ T_1\xi_1\otimes T_2\xi_2$$
for homogeneous vectors $\xi_i\in\ch_i$, where, as usual, the sign $\varepsilon(T_2, \xi_1)$
is $-1$ if $T_2$ and $\xi_1$ are both odd.  Notice that if $T_1$ and $T_2$ are both even, then
$T_1\hat{\otimes} T_2$ is nothing but the usual tensor product $T_1\otimes T_2$
of operators.\\
We will say that $\pi$ is the Fermi tensor product of the representations
$\pi_i$ and write $\pi=\pi_1\hat{\otimes} \pi_2\hat{\otimes}\cdots \hat{\otimes} \pi_n$. 
It is easy to verify that $\pi$ can be extended to a representation of the
completion $\ga_1\hat{\otimes}_{\rm min} \ga_2\hat{\otimes}_{\rm min}\cdots \hat{\otimes}_{\rm min} \ga_n$.
With a slight abuse of notation, we continue to denote  this extension by $\pi_1\hat{\otimes} \pi_2\hat{\otimes}\cdots \hat{\otimes} \pi_n$.
Once finitely many representations have been dealt with, infinitely many
representations can be handled easily. For the sake of simplicity, we only consider cyclic representations.\\
Indeed, if $\pi_{\om_i}: \ga_i\rightarrow \cb(\ch_{\om_i})$, with $i\in\bn$, are the
GNS representations of the even states $\om_i\in\cs_+(\ga_i)$, then their Fermi product
is the representation $\underset{i\in\bn}{\hat{\otimes}}\, \pi_{\om_i}$ of the
Fermi product $C^*$-algebra ${\hat{\otimes}}_{\rm min}\ga_i$ acting on
the Hilbert space $\underset{\boldsymbol{\xi}}{{\otimes}}\, \ch_{\om_i}$, with
$\boldsymbol{\xi}:=\{\xi_{\om_i}\}_{i\in\bn}$, uniquely determined by
$$
\underset{i\in\bn}{\hat{\otimes}}\, \pi_{\om_i}(a_1\, \hat{\otimes}\cdots \hat{\otimes} a_n\,\hat{\otimes} 1\,\hat{\otimes} 1\,\hat{\otimes}\cdots )=
\pi_{\om_1}(a_1)\hat{\otimes}\cdots\hat{\otimes} \pi_{\om_n}(a_n)\,\hat{\otimes} 1\,\hat{\otimes} 1\,\hat{\otimes}\cdots.
$$

\begin{rem}\label{prodrep}
The representation $\underset{i\in\bn}{\hat{\otimes}}\, \pi_{\om_i}$ is still cyclic
and coincides (up to unitary equivalence) with the GNS representation of
the product state $\om=\underset{i\in\bn}{\times}\om_i$. Indeed, the unit vector $\xi$
defined  as 
$$\xi:=\iota_n(\xi_{\om_1}\, {\otimes}\cdots {\otimes} \xi_{\om_n})=:\underset{i\in\bn}{{{\otimes}}}\, \xi_{\om_i}$$ 
(note that the definition does not depend on $n$)
is cyclic, and the equality $\om(\cdot)= \langle\underset{i\in\bn}{{\hat{\otimes}}}\, \pi_{\om_i}(\cdot) \xi, \xi\rangle$ is easily checked.
\end{rem}

\begin{rem}\label{faithful}
Note that if we start with a faithful even state $\om$ on $\ga$, then its product
$\times^\bn\om$ will  be a faithful state on  $\hat{\otimes}_{\rm min}^{\bn}\ga$. This can  actually be seen much in the
same way as in the proof of Proposition 5  in \cite{Gui2}, page 22. 
\end{rem}

Finally, we will simply denote by ${{\otimes}}^\bn\, \ch_\om$ the infinite  product
$\underset{\boldsymbol{\xi}}{{\otimes}}\, \ch_{\om_i}$, with
$\ch_{\om_i}=\ch_\om$ and $\boldsymbol{\xi}$ being 
sequence constantly equal to $\xi_\om$.\\

As already remarked, the infinite Fermi tensor product $\hat{\otimes}_{\rm min}^{\bn}\ga$ of a given
$C^*$-algebra $\ga$ is a quasi-local
algebra. In addition, $\hat{\otimes}_{\rm min}^{\bn}\ga$ is acted upon by $\bp_\bn$ in a natural way.
Indeed, associated with any $\s\in\bp_\bn$ there is a $*$-automorphism
$\a_\s\in {\rm Aut}(\hat{\otimes}_{\rm min}^{\bn}\ga)$, which is completely
determined by
$$\a_\s(\iota_n(a_1\hat{\otimes} a_2\hat{\otimes}\cdots \hat{\otimes} a_n))=\iota_n(a_{\s(1)}\hat{\otimes} a_{\s(2)}\hat{\otimes}\cdots \hat{\otimes} a_{\s(n)})\,,$$
for $n\in\bn$, $a_i\in\ga$, $i=1, 2, \ldots, n$.  Clearly, this is a local action of $\bp_\bn$ in the sense of
Definition \ref{locact}.
In particular, 
Theorem \ref{extremelocal} applies to the present context 
in a strengthened fashion. More precisely, the extreme symmetric states can now be characterized as infinite products
of a given state on $\ga$.

\begin{prop}\label{extremeprod}
Let $(\ga, \theta)$ be a $\bz_2$-graded $C^*$-algebra. If $\om$ is a symmetric state on  $\hat{\otimes}_{\rm min}^{\bn}\ga$, then the following are equivalent:
\begin{enumerate}
\item $\om$ is extreme;
\item $\om $ is strongly clustering;
\item there exists an even state $\r\in\cs(\ga)$ such that $\om=\times^\bn \r$.
\end{enumerate}
\end{prop}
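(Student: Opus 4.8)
The plan is to lean on Theorem \ref{extremelocal}, which applies verbatim here because $\hat{\otimes}_{\rm min}^{\bn}\ga$ is a quasi-local algebra carrying the local action $\a$ of $\bp_\bn$ described just above. That theorem gives the equivalence $(1)\Leftrightarrow(2)$ for free, and moreover shows that both are equivalent to the factorization property
$$(\star)\qquad \om(ab)=\om(a)\om(b),\quad a\in\ga(I),\ b\in\ga(J),\ I\cap J=\emptyset.$$
It therefore suffices to establish $(\star)\Leftrightarrow(3)$, so that the whole chain reads $(3)\Leftrightarrow(\star)\Leftrightarrow(1)\Leftrightarrow(2)$.

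For $(\star)\Rightarrow(3)$ I would first isolate the candidate one-site state. Set $\r:=\om\circ\iota_1$, the restriction of $\om$ to the first factor $\ga\cong\ga(\{1\})$. Since $\om$ is symmetric it is even by Proposition \ref{symmquasi}(1), whence $\r$ is even as well; this is exactly what is needed for the infinite product $\times^\bn\r$ to be a genuine (positive) state. Invariance of $\om$ under the transposition $(1\,k)$ forces $\om$ to restrict to $\r$ on every single site, i.e. $\om(1\hat{\otimes}\cdots\hat{\otimes}a\hat{\otimes}\cdots)=\r(a)$ with $a$ in the $k$-th slot. It then remains to compare $\om$ with $\times^\bn\r$ on the dense set of simple tensors. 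Writing a homogeneous simple tensor as a product $a_1\hat{\otimes}\cdots\hat{\otimes}a_n=\iota_1(a_1)\cdot(1\hat{\otimes}a_2\hat{\otimes}\cdots\hat{\otimes}a_n)$ of elements localized in the disjoint regions $\{1\}$ and $\{2,\dots,n\}$, one peels off a single factor via $(\star)$ to obtain $\r(a_1)\,\om(1\hat{\otimes}a_2\hat{\otimes}\cdots)$, and iterating yields $\om(a_1\hat{\otimes}\cdots\hat{\otimes}a_n)=\r(a_1)\r(a_2)\cdots\r(a_n)$, which is precisely $\times^\bn\r$ on that tensor. Density of the images of the $\iota_n$'s together with continuity of states then gives $\om=\times^\bn\r$. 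The converse $(3)\Rightarrow(\star)$ is immediate: for a product state, the value on a product of two elements supported on disjoint index sets splits into the product of the two values by the very definition of $\times^\bn\r$, which is $(\star)$; Theorem \ref{extremelocal} then returns $(1)$ and $(2)$.

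The one step that genuinely needs care — the main obstacle — is the factorization $a_1\hat{\otimes}\cdots\hat{\otimes}a_n=\iota_1(a_1)\cdot(1\hat{\otimes}a_2\hat{\otimes}\cdots\hat{\otimes}a_n)$ inside $\hat{\otimes}_{\rm min}^{\bn}\ga$, since multiplication there carries the graded (Koszul-type) signs $\eeps(\,\cdot\,,\,\cdot\,)$. The key observation is that each localized factor has the \emph{even} identity in every slot it does not occupy, and the sign $\eeps(j,k)$ equals $-1$ only when two odd homogeneous components are interchanged; hence multiplying factors supported at distinct single sites produces no sign, and the graded product collapses to the ordinary simple tensor. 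This lets the peeling argument run unobstructed. (When some $a_k$ is odd both sides of the desired identity vanish, since $\r$ is even and $\om$ annihilates odd localized elements by \eqref{oddvanish}, so the homogeneous reduction loses nothing.) Extending from homogeneous to arbitrary $a_i$ by multilinearity completes the verification.
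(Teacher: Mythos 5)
Your proof is correct, and its skeleton---reduce everything to Theorem \ref{extremelocal}, then identify the factorizing symmetric states with product states by restricting $\om$ to a single site and peeling off tensor factors---is in substance the argument the paper relies on. The difference is one of route: the paper disposes of the proposition in one line, citing \cite[Theorem 5.3]{CFCMP} for the equivalence $(2)\Leftrightarrow(3)$, and that argument runs through strong clustering along the permutations $\s_n$ of \eqref{sigman}; you instead consume condition (3) of Theorem \ref{extremelocal}---the exact factorization $\om(ab)=\om(a)\om(b)$ over disjoint finite supports---which lets you avoid any limiting procedure: the identity $\om(a_1\hat{\otimes}\cdots\hat{\otimes}a_n)=\r(a_1)\cdots\r(a_n)$ becomes a finite induction, with symmetry supplying both the evenness of $\r=\om\circ\iota_1$ (needed for positivity of $\times^\bn\r$) and the equality of all one-site marginals. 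Your sign bookkeeping is also right, and it is the one point where the graded setting could bite: in the peeling the overlapping slots always carry the even unit (and your peels go in increasing site order), so no Koszul sign appears. The only place you are slightly terse is the converse $(3)\Rightarrow(\star)$: for interleaved supports $I,J$ the product of two simple tensors is a \emph{rearranged} simple tensor up to a sign $\pm 1$, so the claim is not literally ``by the very definition''---but the sign is $-1$ only when two odd homogeneous components are transposed, in which case both sides vanish since $\r$ is even, which is exactly the check your final parenthesis supplies. What each approach buys: the paper's citation is shorter and keeps the parallel with the CAR case of \cite{CFCMP} explicit; yours is self-contained and makes transparent that, once Theorem \ref{extremelocal} is in hand, the identification of extreme symmetric states on $\hat{\otimes}_{\rm min}^{\bn}\ga$ with infinite products is a purely finite, algebraic matter.
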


\begin{proof}
In light of Theorem \ref{extremelocal}  we need only show that (2)
and (3) are equivalent, which can be done exactly as in \cite[Theorem 5.3]{CFCMP}.
\end{proof}

As a consequence of the above result, we find that the extreme symmetric states of $\hat{\otimes}_{\min}^{\bn}\ga$ are 
sufficiently many to separate its points. This circumstance plays an
instrumental role in proving weak ergodicity of the permutation action on an infinite product, as shown below.

\begin{prop}\label{weakerg}
For any given $\bz_2$-graded $C^*$-algebra $(\ga, \theta)$, 
the $C^*$-dynamical system  $(\hat{\otimes}_{\rm min}^{\bn}\ga, \bp_\bn, \{\a_\s: \s\in\bp_\bn\})$ is weakly ergodic, {\it i.e.}
 $x\in\hat{\otimes}_{\rm min}^{\bn}\ga$ with $\a_\s(x)=x$ for every $\s\in\bp_\bn$ implies $x=\lambda 1$
for some $\lambda\in \bc$.
\end{prop}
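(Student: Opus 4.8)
The plan is to prove that a fixed point $x \in \hat{\otimes}_{\rm min}^{\bn}\ga$ must be a scalar multiple of the identity by exploiting the fact, recorded just before the statement, that extreme symmetric states separate the points of $\hat{\otimes}_{\rm min}^{\bn}\ga$. Concretely, suppose $\a_\s(x) = x$ for every $\s \in \bp_\bn$. It suffices to show that $\om(x) = \om(\lambda 1)$ for a well-chosen scalar $\lambda$ and \emph{all} extreme symmetric states $\om$, since by Proposition~\ref{extremeprod} these states are dense enough to separate points; then $x - \lambda 1$ is annihilated by every such $\om$ and hence equals $0$.

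The heart of the argument is therefore to evaluate an arbitrary extreme symmetric state $\om$ on a fixed point $x$. By Proposition~\ref{extremeprod}, any such $\om$ is of the form $\om = \times^\bn \r$ for a single even state $\r \in \cs_+(\ga)$, and it is strongly clustering. First I would reduce to $x$ localized: since $\hat{\otimes}_{\rm min}^{\bn}\ga$ is the inductive limit of the $\ga(\{1,\ldots,n\})$, I approximate $x$ in norm by an element $y \in \ga(I)$ for a finite $I \subset \bn$. The key move is then to apply $\s_m$ (the transposition-type permutation from \eqref{sigman} that displaces the support of $y$ far away) and use invariance together with strong clustering. For a localized $y$ one has $\om(\a_{\s_m}(y)\, b) \to \om(y)\om(b)$ as $m \to \infty$; taking $b$ to be another copy of $y$ displaced appropriately, and using $\a_\s(x)=x$ to replace $\om(x\cdot)$ by $\om(\a_{\s_m}(x)\cdot)=\om(x\cdot)$ while pushing the support of one factor off to infinity, forces $\om(x^2)=\om(x)^2$ for self-adjoint $x$. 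More precisely, writing $\om(x \cdot x) = \lim_m \om(\a_{\s_m}(x)\, x) = \om(x)\om(x)$ by clustering applied to the localized approximants, one deduces that $x$ has zero variance in the state $\om$, i.e. $\om\big((x-\om(x)1)^*(x-\om(x)1)\big) = 0$.

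The next step is to identify the common value $\lambda := \om(x)$ across all extreme $\om$. Because $x$ is a fixed point and every $\a_\s$ preserves each extreme symmetric state only up to relabelling the factors, I would argue that $\om(x)$ does not actually depend on which product state $\times^\bn\r$ is chosen: invariance of $x$ under the full permutation group, combined with the product structure, pins down $\om(x)$ to a value determined by the single-site data in a permutation-symmetric way. In the GNS picture of a fixed faithful product state (available by Remark~\ref{faithful} when $\ga$ admits a faithful even state, with the general case handled by the separation property) the vanishing variance for the cyclic vector of each $\om$ gives $\pi_\om(x)\xi_\om = \lambda \xi_\om$, and strong clustering (weak mixing of $\s_n$) upgrades this to $\pi_\om(x) = \lambda 1$. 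Since the extreme states separate points, the element $x - \lambda 1$, which every extreme $\om$ sends to $0$, must vanish, giving $x = \lambda 1$.

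The main obstacle I anticipate is establishing that the scalar $\lambda = \om(x)$ is genuinely \emph{independent} of the extreme state $\om$ used to test $x$; the variance computation only shows $x$ is scalar \emph{in each representation}, and a priori the scalar could vary with $\r$. Overcoming this requires using the fixed-point property of $x$ more forcefully — for instance, by noting that for two even states $\r_1, \r_2$ one can interpolate through product states that agree on larger and larger initial blocks and using permutation invariance of $x$ together with the clustering estimates to show the two evaluations coincide. A cleaner route, which I would pursue first, is to observe that $\bc 1$ is exactly the fixed-point space and that weak ergodicity is equivalent to triviality of the fixed-point algebra; then the separation-by-extreme-states argument, applied to the self-adjoint element $x - \om_0(x)1$ for any single reference extreme state $\om_0$, combined with the variance vanishing in \emph{every} extreme state, directly forces this difference to be null, sidestepping the need to compare scalars across states.
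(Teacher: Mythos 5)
Your proposal contains a genuine gap, and it is precisely at the point you yourself flag as the main obstacle. The load-bearing step --- ``$x-\lambda 1$ is annihilated by every extreme symmetric state and hence equals $0$'' --- fails, because extreme symmetric states do not separate points in the state-value sense you need. By Proposition \ref{symmquasi}~(1) every symmetric state is even, hence vanishes identically on the odd part of $\hat{\otimes}_{\rm min}^{\bn}\ga$; consequently your scheme cannot distinguish a (hypothetical) nonzero \emph{odd} fixed point from $0$: every extreme symmetric state assigns it value $0$ and zero variance, and nothing in the argument excludes it. The paper's remark preceding the proposition should be read as saying the product states are jointly faithful at the level of GNS representations (this is what the minimal norm encodes), not that their \emph{values} separate elements. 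Your ``cleaner route'' is moreover circular: setting $y=x-\om_0(x)1$, what you know is $\om\bigl((y-\om(y)1)^*(y-\om(y)1)\bigr)=0$ for each extreme $\om$, together with $\om_0(y)=0$ for the one reference state; to conclude $y=0$ by separation you would need $\om(y)=0$ for \emph{all} extreme $\om$, which is exactly the scalar comparison you set out to sidestep. (A secondary issue: the asserted upgrade from $\pi_\om(x)\xi_\om=\lambda\xi_\om$ to $\pi_\om(x)=\lambda 1$ requires $\om\bigl(b^*(x-\lambda)^*(x-\lambda)b\bigr)=0$ for all $b$, which two-point strong clustering alone does not deliver in this non-asymptotically-abelian graded setting.)

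The paper's proof avoids all of this by trading ``many separating states'' for \emph{one faithful state}. Given the fixed point $x$, it first reduces to a separable $\bz_2$-graded $C^*$-subalgebra $\widetilde{\ga}\subset\ga$ with $x\in\hat{\otimes}_{\rm min}^{\bn}\widetilde{\ga}$: the countably many tensor legs occurring in norm-approximants of $x$, together with their images under $\theta$, generate $\widetilde{\ga}$. A separable graded $C^*$-algebra always carries a faithful \emph{even} state $\rho$ (average a faithful state with its composition with $\theta$), and by Remark \ref{faithful} the product $\om=\times^\bn\rho$ is faithful on $\hat{\otimes}_{\rm min}^{\bn}\widetilde{\ga}$. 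Since $x$ is fixed, $\pi_\om(x)\xi_\om$ is an invariant vector; by Propositions \ref{symmquasi} and \ref{extremeprod} the product state $\om$ is extreme and its invariant subspace is $\bc\xi_\om$, so $\pi_\om(x)\xi_\om=\lambda\xi_\om$, whence $\om\bigl((x-\lambda)^*(x-\lambda)\bigr)=0$ and faithfulness gives $x=\lambda 1$ --- no comparison of scalars across states, and no separation-by-values, is ever needed. You in fact glimpsed this route (``available by Remark \ref{faithful} when $\ga$ admits a faithful even state'') but relegated the general case to the flawed separation argument; the missing idea is the separability reduction, which makes a faithful even state always available on the subalgebra that matters.
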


\begin{proof}

We start by showing that for any given $x$ in $\hat{\otimes}_{\rm min}^{\bn}\ga$ there exists a separable $\bz_2$-graded subalgebra $\widetilde{\ga}\subset\ga$ such that
$x$ belongs to $\hat{\otimes}_{\rm min}^{\bn}\widetilde{\ga}$.
Indeed, by definition $x$ is the limit in norm of a sequence
$\{x_n\}_{n\in\bn}$, where the $x_n$'s
are elements of the form
$$x_n=\sum_{k\leq K_n} a_{1, k}^{(n)}\,\hat{\otimes} a_{2, k}^{(n)}\, \hat{\otimes}\cdots\hat{\otimes} a_{L_n, k}^{(n)}\,\hat{\otimes}1\hat{\otimes} 1\cdots$$
where $K_n, L_n$ are suitable integers and $a_{i, k}^{(n)}$ belongs to $\ga$ for every integers $k\leq K_n$ and $i\leq L_n$.
The countably generated $C^*$-subalgebra
$$\widetilde{\ga}:=C^*\{a_{i, k}^{(n)}, \theta(a_{i, k}^{(n)}): i\leq L_n, k\leq K_n, n\in\bn\}\subset\ga$$
clearly does the job.
Let now $x$ in $\hat{\otimes}_{\rm min}^{\bn}\ga$ be a fixed point, that is
$\a_\s(x)=x$ for every $\s\in\bp_\bn$, and let
$\rho$ be a faithful even state on the separable $C^*$-algebra $\widetilde{\ga}$
considered above. Note that the action of
$\bp_\bn$ leaves $\hat{\otimes}_{\rm min}^{\bn}\widetilde{\ga}$
invariant. Define $\om$ as the infinite
product of $\r$ with itself.  By Remark \ref{faithful} $\om$ is still faithful.
Since $x$ is invariant under the action of all
$\s$'s in $\bp_\bn$, we have that
$\pi_\om(x)\xi_\om$ lies in $\ch_\om^{\bp_\bn}$. By virtue of
Propositions \ref{symmquasi} and \ref{extremeprod}
there exists $\lambda\in\bc$ such that
$\pi_\om(x)\xi_\om=\lambda\xi_\om$.
By faithfulness of $\om$ we find $x=\lambda 1$.
\end{proof}

The compact convex set $\cs^{\bp_\bn}(\hat{\otimes}_{\rm min}^{\bn}\ga)$ is again
a Choquet simplex. Moreover, its extreme points make up a closed set since the bijection
$\cs_+(\ga)\ni\r\overset{T}{\mapsto} \times^\bn\r\in \ce(\cs^{\bp_\bn}(\hat{\otimes}_{\rm min}^{\bn}\ga))$
establishes a homeomorphism between the two topological spaces.
In particular, for any $\om\in\cs^{\bp_\bn}(\hat{\otimes}_{\rm min}^{\bn}\ga) $, there exists
a unique probability measure $\mu$ which is now genuinely supported on $\ce(\cs^{\bp_\bn}(\hat{\otimes}_{\rm min}^{\bn}\ga))$
such that
\begin{equation*}\label{Choq1}
\om=\int_{\ce(\cs^{\bp_\bn}(\hat{\otimes}_{\rm min}^{\bn}\ga))} \psi\, {\rm d}\mu(\psi)\,.
\end{equation*}
 Because $\cs_+(\ga)$ and $\ce(\cs^{\bp_\bn}(\hat{\otimes}_{\rm min}^{\bn}\ga))$ are homeomorphic compact spaces, the above equality can also be rewritten
as
\begin{equation*}\label{Choq2}
\om=\int_{\cs_+(\ga)} \times^\bn\r\,{\rm d}\mu^*(\r)\, ,
\end{equation*}
where $\mu^*$ is the probability  measure on  $\cs_+(\ga)$ induced by $\mu$ through $T$, {\it i.e.}
$\mu^*(B)= \mu(T(B))$, for any Borel set $B\subset\cs_+(\ga)$.\\

The structure of our Choquet simplex can be further analyzed by
spotting its faces. This was done in \cite{St} in the case of usual infinite tensor products and can be easily
adapted to the present situation. More explicitly, Theorem 2.9 and Corollary 2.10 in \cite{St} admit a straightforward
extension to the graded case. In order to the state it, we keep the same notation as in the above mentioned paper
and denote by $X$ the generic type of a given von Neumann algebra. In other words, $X$ can be  $I, II_1 , II_\infty,  III$. 
\begin{prop}
 For any $X$ as above, define the convex subsets
$$\cs^{\bp_\bn}(\hat{\otimes}_{\rm min}^{\bn}\ga)_X:=\{\om\in\cs^{\bp_\bn}(\hat{\otimes}_{\rm min}^{\bn}\ga): \pi_\om(\hat{\otimes}_{\rm min}^{\bn}\ga)''\, \textrm{is of type}\, X\}\,.$$
Then $\cs^{\bp_\bn}(\hat{\otimes}_{\rm min}^{\bn}\ga)_X$ is a face of $\cs^{\bp_\bn}(\hat{\otimes}_{\rm min}^{\bn}\ga)$.
Moreover, $\cs^{\bp_\bn}(\hat{\otimes}_{\rm min}^{\bn}\ga)$
is the closed convex hull of the faces $\cs^{\bp_\bn}(\hat{\otimes}_{\rm min}^{\bn}\ga)_X$.
\end{prop}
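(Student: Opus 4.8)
The plan is to transcribe St\o rmer's arguments for \cite[Theorem 2.9 and Corollary 2.10]{St} into the graded framework, relying on the fact that all the structural ingredients have already been set up here in the $\bz_2$-graded setting: the set $\cs^{\bp_\bn}(\hat{\otimes}_{\rm min}^{\bn}\ga)$ is a Choquet simplex, its extreme points are exactly the product states $\times^\bn\r$ with $\r\in\cs_+(\ga)$ by Proposition \ref{extremeprod}, and for any symmetric $\om$ with barycentric measure $\mu$ one has the direct integral decomposition $\pi_\om(\hat{\otimes}_{\rm min}^{\bn}\ga)''=\int^\oplus \pi_\psi(\hat{\otimes}_{\rm min}^{\bn}\ga)''\,{\rm d}\mu(\psi)$ over the extreme states, exactly as exploited in the proof of Proposition \ref{localtail}. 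Since types of von Neumann algebras, quasi-equivalence and direct integrals are insensitive to the grading of the underlying $C^*$-algebra, no genuinely new phenomenon arises and the grading is not an obstacle.

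For the face property I would argue by domination. Fix $X$ and suppose $\om\in\cs^{\bp_\bn}(\hat{\otimes}_{\rm min}^{\bn}\ga)_X$ decomposes as $\om=t\om_1+(1-t)\om_2$ with $\om_1,\om_2\in\cs^{\bp_\bn}(\hat{\otimes}_{\rm min}^{\bn}\ga)$ and $0<t<1$. From $\om_i\leq t^{-1}\om$ (resp. $(1-t)^{-1}\om$) one gets that $\pi_{\om_i}$ is quasi-contained in $\pi_\om$, so $\pi_{\om_i}(\hat{\otimes}_{\rm min}^{\bn}\ga)''$ is $*$-isomorphic to the reduction of $\pi_\om(\hat{\otimes}_{\rm min}^{\bn}\ga)''$ by a central projection. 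As reduction by a central projection preserves the type, both $\pi_{\om_i}(\hat{\otimes}_{\rm min}^{\bn}\ga)''$ are of type $X$, i.e. $\om_1,\om_2\in\cs^{\bp_\bn}(\hat{\otimes}_{\rm min}^{\bn}\ga)_X$; hence this set is a face. Equivalently, and closer to \cite{St}, one may characterise membership through the barycentre: by the direct integral above, together with the fact that a direct integral is of type $X$ precisely when $\mu$-almost every fibre is, one has $\om\in\cs^{\bp_\bn}(\hat{\otimes}_{\rm min}^{\bn}\ga)_X$ if and only if $\mu$ is concentrated on the Borel set $\ce_X$ of type-$X$ extreme states, and uniqueness of the barycentric measure in a simplex then forces the same concentration for the barycentres of $\om_1,\om_2$.

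For the second assertion the key point is that every extreme symmetric state is of pure type. Let $\psi\in\ce(\cs^{\bp_\bn}(\hat{\otimes}_{\rm min}^{\bn}\ga))$ and let $z_I,z_{II_1},z_{II_\infty},z_{III}$ be its canonical type central projections in $\cz(\pi_\psi(\hat{\otimes}_{\rm min}^{\bn}\ga)'')$. Being canonically attached to the von Neumann algebra, each $z_X$ is fixed by every automorphism, in particular by ${\rm ad}_{U_\s^\psi}$, so $z_X\in\cz(\pi_\psi(\hat{\otimes}_{\rm min}^{\bn}\ga)'')\cap U_\psi(\bp_\bn)'=Z_{\bp_\bn}(\psi)=\gz^\perp_\psi$, which is trivial by Propositions \ref{localtail} and \ref{tailtriv}. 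Thus each $z_X\in\{0,1\}$ and, since they sum to $1$, exactly one equals $1$: the algebra $\pi_\psi(\hat{\otimes}_{\rm min}^{\bn}\ga)''$ has a single type. Consequently the extreme boundary splits as $\ce=\bigsqcup_X\ce_X$ with $\ce_X\subset\cs^{\bp_\bn}(\hat{\otimes}_{\rm min}^{\bn}\ga)_X$, and Krein--Milman then yields
$$\cs^{\bp_\bn}(\hat{\otimes}_{\rm min}^{\bn}\ga)=\overline{\conv}\,\ce=\overline{\conv}\Big(\bigcup_X\ce_X\Big)\subseteq\overline{\conv}\Big(\bigcup_X\cs^{\bp_\bn}(\hat{\otimes}_{\rm min}^{\bn}\ga)_X\Big)\subseteq\cs^{\bp_\bn}(\hat{\otimes}_{\rm min}^{\bn}\ga),$$
whence equality.

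The hard part, such as it is, is purely technical: justifying that the type is a measurable invariant along the direct integral field and that $\ce_X$ is Borel, so that the principle ``type of the integral equals the $\mu$-a.e. type of the fibres'' genuinely applies, together with the standard domination/quasi-containment facts used in the face step. All of this is precisely what St\o rmer handles in the ungraded case, and since every von Neumann-algebraic input has already been established here in the graded setting, the adaptation is expected to be routine.
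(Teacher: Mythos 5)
Your proof is correct and follows essentially the paper's own route: the paper gives no independent argument, stating only that St\o rmer's Theorem 2.9 and Corollary 2.10 in \cite{St} extend straightforwardly to the graded case, and your write-up is precisely that adaptation (domination/central-reduction for the face property, definite type of extreme states plus Krein--Milman for the hull). The one genuinely graded-specific step --- where asymptotic abelianness is unavailable, so you instead observe that the canonical type projections are fixed by ${\rm ad}_{U_\s^\psi}$ and hence lie in $Z_{\bp_\bn}(\psi)=\gz^\perp_\psi=\bc 1$ by Propositions \ref{localtail} and \ref{tailtriv} --- is exactly the right replacement, and correctly avoids the false shortcut of assuming extreme symmetric states are factorial.
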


\medskip

We are next going to draw our attention to maximal completions 
of infinite graded tensor products. As we will recall, these can also
be obtained as quotients of a universal $C^*$-algebra, the infinite free product
of a given $C^*$-algebra. It is this very property that makes maximal completions
particularly suited to establishing a correspondence between their symmetric states
and quantum stochastic processes of a particular form. 
Notably, this allows us to come to a version of de Finetti's
theorem for the processes thus obtained.
With this in mind, we start by quickly outlining how
maximal completions of infinite products can be got to.\\
For finitely many factors $\ga_i$, $i=1, 2, \ldots, n$, the maximal
Fermi tensor product $\ga_1\hat{\otimes}_{\rm max}\ga_2\hat{\otimes}_{\rm max}\cdots\hat{\otimes}_{\rm max}\ga_n$ 
is nothing but the completion of the algebraic product $\ga_1\hat{\otimes}\ga_2\hat{\otimes}\cdots\hat{\otimes}\ga_n$ 
with respect to the maximal $C^*$-norm, see \cite{CDF} for the details.
Infinite products, as usual, are dealt with by taking inductive limits.
Henceforth we will be focusing on the maximal infinite tensor product $\hat{\otimes}_{\rm max}^\bn\ga$ of a given
$\bz_2$-graded $C^*$-algebra $(\ga, \theta)$.\\
First, we observe that $\hat{\otimes}_{\rm max}^\bn\ga$ can also be recovered as a suitable
quotient of the infinite free product $\ast^\bn\ga$ of $\ga$ with itself, see \cite{Av} for
a thorough account of free products.
Note that $\ast^\bn\ga$ is a $\bz_2$-graded $C^*$-algebra with grading
given by $\theta^*:=\ast^\bn\theta$. For every $j\in\bn$, we will denote by
$i_j:\ga\rightarrow\ast^\bn\ga$ the $j$-th embedding of $\ga$ into its infinite free
product, {\it cf.} \cite{Av}. Consider now the closed two-sided ideal
$I$ of $\ast^\bn\ga$ generated by elements of the form $[i_j(a), i_k(b)]_{\theta^*}$
as $a, b$ vary in $\ga$ and $j\neq k$, where for homogeneous $x,y$ in $\ast^\bn\ga$ the symbol $[x,y]_{\theta^*}$ is
the commutator of $x$ and $y$ if at least one of them is even, or the anti-commutator 
when $x$ and $y$ are both odd.\\
An  easy application of \cite[Theorem 8.4]{CDF} shows that the quotient $C^*$-algebra $\ast^\bn\ga/I$ is $*$-isomorphic with
 $\hat{\otimes}_{\rm max}^\bn\ga$.
We will denote by $\Psi:\ast^\bn\ga\rightarrow \hat{\otimes}_{\rm max}^\bn\ga$
the canonical projection onto the quotient.\\
Following \cite{CrFid}, by a quantum stochastic process we mean a quadruple $(\ga, \{\iota_j: j\in\bn\}, \ch, \xi)$, where
$\ga$ is a unital $C^*$-algebra, $\ch$ is a Hilbert space, $\iota_j:\ga\rightarrow \cb(\ch)$ is a
$*$-representation for every $j\in\bn$, and $\xi\in\ch$ is a cyclic vector for the von
Neumann algebra $\bigvee_{j\in\bn}\iota_j(\ga)$.
As shown in \cite{CrFid}, there is a one-to-one correspondence between
stochastic processes on $\ga$ and states of the infinite free
product $\ast^\bn\ga$. This is realized as follows. Starting from a state $\om$
on  $\ast^\bn\ga$, the corresponding process is obtained as 
\begin{equation}\label{one-to-one}
\iota_j:=\pi_\om\circ i_j,\quad j\in\bn.
\end{equation}
Note that the GNS vector $\xi_\om$ is certainly cyclic for $\bigvee_{j\in\bn}\iota_j(\ga)$.
Now $\bp_\bn$ acts naturally on the infinite free product 
$\ast^\bn\ga$. Indeed for any $\s\in\bp_\bn$ there is a unique
automorphism $\a_\s$ of $\ast^\bn\ga$ determined by
$$\a_\s(i_{j_1}(a_1)i_{j_2}(a_2)\cdots i_{j_n}(a_n))= i_{\s(j_1)}(a_1)i_{\s(j_2)}(a_2)\cdots i_{\s(j_n)}(a_n)$$
for $j_1\neq j_2\neq \cdots\neq j_n\in\bn$, $a_1, a_2, \ldots, a_n\in\ga$, $n\in\bn$.
Invariant states under this action of $\bp_\bn$ are again referred to as symmetric states and correspond to so-called
exchangeable processes. We recall that a process $(\ga, \{\iota_j: j\in\bn\}, \ch, \xi)$
is said to be exchangeable if for every $j_1\neq j_2\neq \cdots\neq j_n\in\bn$, $n\in\bn$, $a_1, a_2, \ldots, a_n\in\ga$, and
$\s\in\bp_\bn$ one has
$$\langle \iota_{j_1}(a_1)\iota_{j_2}(a_2)\cdots \iota_{j_n}(a_n)\xi, \xi\rangle= \langle \iota_{\s(j_1)}(a_1)\iota_{\s(j_2)}(a_2)\cdots \iota_{\s(j_n)}(a_n)\xi, \xi\rangle\,.$$

\medskip
We are actually interested in processes $(\ga, \{\iota_j: j\in\bn\}, \ch, \xi)$ where the sample
algebra $\ga$ is in fact a $\bz_2$-graded $C^*$-algebra, and such that for homogeneous $a, b\in\ga$ and $j\neq k$
$\iota_j(a)$ and $\iota_k(b)$ commute if at least one between $a$ or $b$ is even and anti-commute otherwise.
Clearly, processes of this type arise from states of the quotient $\ast^\bn\ga/I\cong\hat{\otimes}_{\rm max}^\bn\ga$, and therefore they will be referred to as $\bz_2$-graded processes on the sample
algebra $\ga$.
Like minimal graded infinite products, maximal ones are seen at once to be quasi-local algebras. In addition,
the natural action of $\bp_\bn$ on them is of course local. As a consequence,
Proposition \ref{localtail} applies, so if $\om$ is a symmetric state on $\hat{\otimes}_{\rm max}^\bn\ga$,
we denote by $E_\om: \pi_\om(\hat{\otimes}_{\rm max}^\bn\ga)''\rightarrow\gz^\perp_\om$ the unique conditional
expectation onto the (commutative) tail algebra.
 That said, we are now ready to state a de Finetti-type theorem for graded processes.

\begin{thm}\label{ffDeFinetti}
A  $\bz_2$-graded process $(\ga, \{\iota_j: j\in\bn\}, \ch, \xi)$, with corresponding
$\om\in\cs(\hat{\otimes}_{\rm max}^\bn\ga)$, is exchangeable if and only if:
\begin{itemize}
\item [(i)] the process is conditionally independent w.r.t. $E_\om$, namely
$$E_\om[XY]=E_\om[X]E_\om[Y]$$
for every $X\in \big(\bigvee_{i\in I} \iota_i(\ga)\big) \bigvee\gz^\perp_\om$ and $Y\in \big(\bigvee_{j\in J} \iota_j(\ga)\big) \bigvee\gz^\perp_\om$, and $I, J\subset \bn$ finite disjoint subsets;
\item [(ii)] the process is identically distributed w.r.t. $E_\om$, namely
$$E_\om[\iota_j(a)]=E_\om[\iota_k(a)]$$ for every $j, k\in\bn$ and $a\in\ga$.
\end{itemize}
\end{thm}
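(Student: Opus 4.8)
The plan is to recognise this theorem as the translation of Proposition \ref{deFinetti2} into the language of quantum stochastic processes, so that the bulk of the work lies in matching the objects in the two pictures rather than in producing new estimates. First I would record that $\hat{\otimes}_{\rm max}^\bn\ga$ is a quasi-local algebra whose net of local algebras $\{\ga(I): I\in\cp_0(\bn)\}$ is additive: in the free-product picture $\ast^\bn\ga$ is generated by the embeddings $i_j(\ga)$, hence after passing to the quotient $\hat{\otimes}_{\rm max}^\bn\ga\cong\ast^\bn\ga/I$ the single-site algebras $\ga(\{j\})=\Psi(i_j(\ga))$ generate the whole algebra and $\ga(I)=C^*(\ga(\{j\}):j\in I)$. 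With the identification $\iota_j=\pi_\om\circ\Psi\circ i_j$ one gets $\pi_\om(\ga(I))''=\bigvee_{j\in I}\iota_j(\ga)$ for every finite $I$, and $\xi_\om$ is precisely the cyclic vector $\xi$ of the process. Since the permutation action on $\ast^\bn\ga$ leaves the ideal $I$ invariant, it descends to a local action $\a$ on $\hat{\otimes}_{\rm max}^\bn\ga$ in the sense of Definition \ref{locact}.

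The key conceptual step is the equivalence between exchangeability of the process and symmetry (that is, $\bp_\bn$-invariance) of $\om$. Writing $\om(x)=\langle\pi_\om(x)\xi_\om, \xi_\om\rangle$ and using that the operators $\iota_{j_1}(a_1)\cdots\iota_{j_n}(a_n)$ are exactly the images under $\pi_\om$ of the generating monomials of $\hat{\otimes}_{\rm max}^\bn\ga$, the defining relation of exchangeability becomes literally the statement $\om\circ\a_\s=\om$ for all $\s\in\bp_\bn$. Once this is in place, Propositions \ref{localtail} and \ref{deFinetti2} are available, and the tail algebra $\gz^\perp_\om$ together with its conditional expectation $E_\om$ are the very ones fixed before the statement of the theorem.

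It then remains to match the two conditions of Proposition \ref{deFinetti2} with (i) and (ii). Condition (i) of that proposition, conditional independence of the net with respect to $E_\om$, becomes condition (i) here upon the identification $\pi_\om(\ga(I))''=\bigvee_{i\in I}\iota_i(\ga)$, so that $\pi_\om(\ga(I))''\bigvee\gz^\perp_\om=(\bigvee_{i\in I}\iota_i(\ga))\bigvee\gz^\perp_\om$. For condition (ii), an element $a\in\ga(\{i\})$ is $\iota_i(\tilde a)$ for the corresponding $\tilde a\in\ga$, and $\a_\s$ carries it to $\iota_{\s(i)}(\tilde a)$; hence the invariance $E_\om[\pi_\om(\a_\s(a))]=E_\om[\pi_\om(a)]$ reads $E_\om[\iota_{\s(i)}(\tilde a)]=E_\om[\iota_i(\tilde a)]$. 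Letting $\s$ range over all of $\bp_\bn$ makes $\s(i)$ an arbitrary index $k\in\bn$, which is exactly the identical-distribution condition (ii). Applying Proposition \ref{deFinetti2} then delivers the stated equivalence.

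The main obstacle, such as it is, is purely bookkeeping: one must be scrupulous in checking that $\gz^\perp_\om$ and $E_\om$ constructed in the abstract quasi-local setting of Proposition \ref{localtail} coincide with those appearing in the process formulation, and that additivity of the maximal net genuinely holds, since Proposition \ref{deFinetti2} requires it. No new analytic input is needed beyond the results already established for quasi-local algebras.
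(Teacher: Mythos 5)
Your proposal is correct and follows essentially the same route as the paper: both reduce the theorem to Proposition \ref{deFinetti2} via the key identification $\bigvee_{i\in I}\iota_i(\ga)=\pi_\om(\ga(I))''$, obtained from additivity of the maximal net and the correspondence $\iota_j=\pi_\om\circ\Psi\circ i_j$. The only difference is that you spell out some details the paper leaves implicit (the descent of the permutation action to the quotient and the exchangeability--symmetry dictionary), which the paper has already established in the discussion preceding the theorem.
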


\begin{proof}
It is an application of Proposition \ref{deFinetti2}. Indeed, $\hat{\otimes}_{\rm max}^\bn\ga$
is a quasi-local $C^*$-algebra coming from the additive net of local algebras
$\{\ga(I): I\in\cp_0(\bn)\}$, where $\ga(I)$
is the unital $C^*$-subalgebra generated by simple tensors of the type
$$1\hat{\otimes}\cdots \hat{\otimes}a_{i_1}\hat{\otimes}\cdots\hat{\otimes} a_{i_2}\hat{\otimes}\cdots \hat{\otimes} a_{i_{|I|}}\hat{\otimes}1\hat{\otimes}1\cdots$$
when the $a_{i_j}$'s vary in $\ga$, $j=1, \ldots, |I|$.\\
In order to apply the aforementioned proposition, though, we first need to ascertain that the equality
$\bigvee_{i\in I} \iota_i(\ga)=\pi_\om(\ga(I))''$ holds for any finite subset $I$. This follows by
additivity and \eqref{one-to-one}, for we have
$$
\bigvee_{j\in I} \iota_j(\ga)=\bigvee_{j\in I}\pi_\om(i_j (\ga))=\pi_\om\bigg(C^*(i_j(\ga):j\in I )\bigg)''=\pi_\om(\ga(I))''\,,
$$
where, by a slight abuse of notation, $i_j: \ga\rightarrow\hat{\otimes}_{\rm max}^\bn\ga$ denotes
the map $\Psi\circ i_j$.
\end{proof}

\section{The twisted commutant of a Fermi product and product states}\label{twistedcomm}

The main goal of this section is to prove that an infinite product of even factorial states is still factorial.
This task will be accomplished by making use of the so-called twisted commutant, see \cite{CDF} and the references
therein. For the reader's convenience, though, we recall some basic definitions.
By a $\bz_2$-graded von Neumann algebra we mean a pair $(\mathcal{M}, U)$, where
$\mathcal{M}\subset \cb(\ch)$ is a von Neumann algebra and $U\in \cu(\ch)$ is a self-adjoint
unitary such that $U\mathcal{M}U= \mathcal{M}$.
With such a $U$ it is possible to associate a $*$-automorphism of $(\cb(\ch), {\rm ad}_U)$, commonly
known as \emph{twist automorphism}, see {\it e.g.} \cite{CDF} and references therein, which is defined as
$$\eta_U(T_+ +T_-):= T_++iUT_-$$
for $T=T_+ + T_-$ in $\cb(\ch)$.
The \emph{twisted commutant} of $\mathcal{M}$ is 
$\mathcal{M}^\wr:= \eta_U(\mathcal{M}')=\eta_U(\mathcal{M})'$.
Obviously, the definition makes sense with any 
subset of $\cb(\ch)$.
Again, more details are found in 
\cite{CDF}. Here, we will limit ourselves to observing
that $\eta_U^2={\rm ad}_U$.
We start with a preliminary lemma.

\begin{lem}\label{cyclic}
Let $(\ch, U)$ be a $\bz_2$-graded Hilbert space and let $\ca\subset\cb(\ch)$ be a $*$-algebra
such that $U\ca U=\ca$. A vector $\xi\in\ch$ with $U\xi=\xi$ is cyclic for $\ca$
if and only if it is cyclic for $\eta_U(\ca)$.
\end{lem}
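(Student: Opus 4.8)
The plan is to show that the two cyclic subspaces actually coincide as subspaces of $\ch$, namely that $\ca\xi=\eta_U(\ca)\xi$; once this is established, passing to closures gives $\overline{\ca\xi}=\overline{\eta_U(\ca)\xi}$, so that $\overline{\ca\xi}=\ch$ if and only if $\overline{\eta_U(\ca)\xi}=\ch$, which is exactly the asserted equivalence of cyclicity.

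First I would use the hypothesis $U\ca U=\ca$ to split $\ca$ into homogeneous parts. Since $U=U^*=U^{-1}$ and $\ca$ is a complex linear space stable under $\ad_U$, for each $T\in\ca$ the operators $T_\pm:=\tfrac{1}{2}(T\pm UTU)$ again lie in $\ca$ and satisfy $\ad_U(T_\pm)=\pm T_\pm$; hence $\ca=\ca_+\oplus\ca_-$ with $\ca_\pm$ complex linear subspaces, and $\eta_U$ acts on $\ca$ by $\eta_U(T_++T_-)=T_++iUT_-$. Next I would compute the action on the even vector $\xi$. The whole point is that an odd operator sends even vectors to odd vectors: for $T_-\in\ca_-$ one has $UT_-\xi=-T_-U\xi=-T_-\xi$, using $U\xi=\xi$. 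Therefore $\eta_U(T_++T_-)\xi=T_+\xi+iUT_-\xi=T_+\xi-iT_-\xi$, whereas $(T_++T_-)\xi=T_+\xi+T_-\xi$. Because $\ca_-$ is closed under multiplication by $i$, the reparametrizations $T_-\mapsto iT_-$ and $T_-\mapsto -iT_-$ are bijections of $\ca_-$, and this is what forces the two sets $\{T_+\xi+T_-\xi\}$ and $\{T_+\xi-iT_-\xi\}$ (as $T_\pm$ range over $\ca_\pm$) to agree. Concretely I would give both inclusions: for $T_+\xi+T_-\xi\in\ca\xi$, the choice $S_+=T_+$, $S_-=iT_-\in\ca_-$ yields $S_+\xi-iS_-\xi=T_+\xi+T_-\xi$, and symmetrically $T_+=S_+$, $T_-=-iS_-$ gives the reverse inclusion, whence $\ca\xi=\eta_U(\ca)\xi$.

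There is no genuine obstacle here; the entire content is the identity $UT_-\xi=-T_-\xi$, which couples the evenness of $\xi$ with the grading of $\ca$. The only points requiring a little care are verifying that $\ca_\pm\subseteq\ca$, so that the decomposition is internal and $\eta_U$ is defined termwise on elements of $\ca$, and noting that multiplication by $i$ preserves $\ca_-$, since it is this complex linearity---and not any algebra structure---that makes the two cyclic subspaces \emph{literally equal} rather than merely simultaneously dense. I would finally remark that the argument uses nothing beyond $\ca$ being a $U$-invariant linear subspace, and that one could instead invoke $\eta_U^2=\ad_U$ to reduce the biconditional to a single implication; however, the direct identification $\ca\xi=\eta_U(\ca)\xi$ is cleaner and spares us from checking that $\eta_U(\ca)$ is again $U$-invariant.
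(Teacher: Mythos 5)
Your proposal is correct and follows essentially the same route as the paper: both use the decomposition $T_\pm=\tfrac{1}{2}(T\pm UTU)\in\ca$ and the key identity $\eta_U(T)\xi=T_+\xi+iUT_-\xi=(T_+-iT_-)\xi$, the paper phrasing the conclusion as the map $T\xi\mapsto\eta_U(T)\xi$ being a linear bijection of $\ca\xi$ where you phrase it as the set equality $\ca\xi=\eta_U(\ca)\xi$ via the reparametrization $T_-\mapsto\pm iT_-$. If anything, your formulation is slightly cleaner, since it sidesteps the (easily checked) well-definedness of the paper's map on vectors.
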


\begin{proof}
We start by observing that if $T\in\ca$, then both $T_+:= \frac{T+UTU}{2}$ and
 $T_-:= \frac{T-UTU}{2}$ are still in $\ca$. The thesis is reached thanks to the following computation
$$\eta_U(T)\xi= T_+\xi + iUT_-\xi= (T_+ -iT_-)\xi$$
which shows that the map $\ca\xi\ni T\xi\mapsto\eta_U(T)\xi\in\ca\xi$ is a linear bijection.
\end{proof}
Here follows a twisted version of Theorem $2$ in \cite{RvD}.
We denote by $\ca_s$ the set of all self-adjoint elements of a given
$*$-algebra $\ca$. Following \cite{RvD}, for any
subspace $K\subset\ch$ we denote by $K^\perp$  the {\it real}
orthogonal complement, namely
$K^\perp=\{x\in\ch: \Re\langle x,\, k \rangle=0, k\in K\}$, 
where $\Re$ is the real part of a complex number.

\begin{lem}\label{Rieffel}
Let $(\ch, U)$ be a $\bz_2$-graded Hilbert space and let
$\ca, \cb\subset \cb(\ch)$ be unital $*$-subalgebras such that $U\ca U=\ca$ and and $U\cb U=\cb$ .
If $\xi\in\ch$ is an even cyclic vector for $\ca$
and $\ca\subset\cb^\wr$, then the following conditions are equivalent:
\begin{enumerate}
\item $\ca^\wr=\cb^{\wr\wr}$;
\item $\eta_U(\ca_s)\xi+i\cb_s\xi$ is dense in $\ch$;
\item $[\eta_U(\ca_s)\xi]^\perp= i \overline{\cb_s\xi}$.
\end{enumerate}
\end{lem}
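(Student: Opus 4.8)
The plan is to deduce this twisted statement directly from the untwisted Rieffel--van Daele theorem (Theorem 2 in \cite{RvD}) by conjugating the entire picture with the twist $\eta_U$. The starting observation, checked on homogeneous elements, is that $\eta_U$ is in fact a genuine $*$-automorphism of $\cb(\ch)$: it is multiplicative and $*$-preserving (the signs contributed by the odd parts and by $U$ cancel out), and it is bijective with $\eta_U^2=\ad_U$. Being an automorphism of $\cb(\ch)$ it is inner, say $\eta_U=\ad_W$ for some unitary $W$, and hence it commutes with the passage to commutants, $\eta_U(\cx')=\eta_U(\cx)'$ for every $\cx\subset\cb(\ch)$; this is precisely the identity already recorded in the definition of the twisted commutant.

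Next I would set $\hat\ca:=\eta_U(\ca)$, a unital $*$-subalgebra of $\cb(\ch)$, and verify that the pair $(\hat\ca,\cb)$ with the vector $\xi$ satisfies the hypotheses of the untwisted theorem. From $\ca\subset\cb^\wr=\eta_U(\cb)'$, applying $\eta_U$ and using $\eta_U^2=\ad_U$ together with $U\cb U=\cb$, one gets
\begin{equation*}
\hat\ca=\eta_U(\ca)\subset\eta_U\big(\eta_U(\cb)'\big)=\eta_U\big(\eta_U(\cb)\big)'=(\ad_U\cb)'=(U\cb U)'=\cb'\,.
\end{equation*}
Moreover, since $U\xi=\xi$, Lemma \ref{cyclic} guarantees that $\xi$, being cyclic for $\ca$, is also cyclic for $\hat\ca=\eta_U(\ca)$. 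Thus $\hat\ca\subset\cb'$ with $\xi$ cyclic for $\hat\ca$, which are exactly the standing assumptions of Theorem 2 in \cite{RvD} (the conditions there only involve the generated von Neumann algebras and real-subspace closures, so working with $*$-algebras is harmless).

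It then remains to match the three conditions for $(\hat\ca,\cb,\xi)$ with ours. As $\eta_U$ is a $*$-automorphism it carries self-adjoint elements to self-adjoint elements, so $\hat\ca_s=\eta_U(\ca_s)$; hence the untwisted density statement ``$\hat\ca_s\xi+i\cb_s\xi$ dense'' and the untwisted real-orthocomplement statement ``$[\hat\ca_s\xi]^\perp=i\overline{\cb_s\xi}$'' are literally our (2) and (3). For the algebraic condition, $\hat\ca'=\eta_U(\ca)'=\ca^\wr$, while $\cb^{\wr\wr}=\eta_U(\cb^\wr)'$ with $\eta_U(\cb^\wr)=\eta_U\big(\eta_U(\cb)'\big)=\eta_U\big(\eta_U(\cb)\big)'=(U\cb U)'=\cb'$, so that $\cb^{\wr\wr}=(\cb')'=\cb''$; therefore the untwisted equality $\hat\ca'=\cb''$ is exactly our (1). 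Invoking Theorem 2 in \cite{RvD} for $(\hat\ca,\cb,\xi)$ yields the equivalence of (1), (2) and (3).

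The bookkeeping above is essentially the whole proof, so the only genuinely delicate point is to confirm that $\eta_U$ is a bona fide $*$-automorphism of $\cb(\ch)$ (so that it preserves self-adjointness and unitality and, being inner, commutes with commutants) and to carry out the translation $\cb^{\wr\wr}=\cb''$. Both hinge on $\eta_U^2=\ad_U$ and on the grading-invariance $U\ca U=\ca$, $U\cb U=\cb$ assumed throughout; once these are in place the reduction to the classical theorem is immediate.
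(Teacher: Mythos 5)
Your proof is correct, but it takes a genuinely different route from the paper's. The paper reproves the Rieffel--van Daele equivalences directly in the twisted setting: it checks by hand that $i\cb_s\xi$ is real-orthogonal to $\eta_U(\ca_s)\xi$ (because $\eta_U(A)B$ is self-adjoint whenever $A\in\ca_s$, $B\in\cb_s$ commute), gets $(2)\Leftrightarrow(3)$ from the density of $X+X^\perp$ for a real subspace $X$, proves $(2)\Rightarrow(1)$ by an approximation argument with $T\in(\ca^\wr)_s$ and $R\in\cb'_s$, and only for $(1)\Rightarrow(2)$ quotes \cite{RvD} ``with $\ca$ replaced by $\eta(\ca)$''. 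You observe that this last substitution in fact disposes of the entire lemma at once: $\eta_U$ is a $*$-automorphism of $\cb(\ch)$ (your sign checks are right, and innerness needs no general theorem, since one verifies directly that $\eta_U={\rm ad}_W$ for the unitary $W=(1+iU)/\sqrt{2}$), the triple $(\eta_U(\ca),\cb,\xi)$ satisfies the untwisted hypotheses --- $\eta_U(\ca)\subset\cb'$ via $\eta_U^2={\rm ad}_U$ and $U\cb U=\cb$, cyclicity via Lemma \ref{cyclic} --- and the three twisted conditions transcribe verbatim into the untwisted ones, the only substantive identification being $\cb^{\wr\wr}=\cb''$, which you carry out correctly (again using $U\cb U=\cb$). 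Your reduction is shorter and makes transparent that the twisted lemma is a formal corollary of the untwisted one under conjugation by the inner twist; what the paper's direct proof buys is self-containedness --- it does not depend on \cite{RvD} being stated in exactly the three-condition form for unital $*$-subalgebras, only on one implication of it --- and its intermediate twisted computations set up the sign bookkeeping that recurs in Remark \ref{realorthogonal} and in the proof of Theorem \ref{VanDaele}. Either argument is complete.
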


\begin{proof}
Throughout the proof $\eta_U$ will be simply written as $\eta$ to ease the notation.
We start by showing that $(2)$ and $(3)$ are equivalent.
First, observe $i\cb_s\xi\subset[\eta(\ca_s)\xi]^\perp$. Indeed,
for $B\in \cb_s$ and $A\in\ca_s$, we have that $\eta(A)B$ is self-adjoint
because $\eta(A)$ and $B$ commute since $\ca\subset \cb^\wr$ (that is $\eta(\ca)\subset \cb'$), but then
$\Re\langle iB\xi, \eta(A)\xi \rangle=\Re \langle i\eta(A)B\xi, \xi \rangle=0$.
From the observation above $(2)$
and $(3)$ are seen to be equivalent by a straightforward application of the following
general fact: $X+X^\perp$ is dense in $\ch$ for any real subspace $X\subset\ch$.

We next show that either $(2)$ or $(3)$ implies $(1)$.  First, note that $(2)$ or $(3)$ implies
$(\ca^\wr)_s\xi\subset \overline{\cb_s\xi}$. Indeed, the same computation as above shows that
in general $(\ca^\wr)_s\xi\subset[i\eta(\ca_s)]^\perp$.
Obviously, we only have to prove the inclusion
$\ca^\wr\subset\cb^{\wr\wr}=\cb''$. To this aim, fix $T$ in $(\ca^\wr)_s$ and $R\in \cb'_s$.
We need to show that $RT=TR$. Since $\xi$ is cyclic for $\ca$, by Lemma \ref{cyclic} it is also
cyclic for $\eta(\ca)$, which means it suffices to verify that
$$\langle RTA\xi, C\xi \rangle=\langle TRA\xi, C\xi \rangle $$
for every $A, C\in\eta(\ca)$.
Now there exists a sequence $\{B_n\}_{n\in\bn}\subset \cb_s$ such that
$\|T\xi-B_n\xi\|\rightarrow 0$, and we have
$$\langle RTA\xi, C\xi \rangle=\langle RAT\xi, C\xi \rangle=\lim_n \langle RAB_n\xi, C\xi \rangle=\lim_n
\langle B_nRA\xi, C\xi\rangle\,$$
where in the last equality we have used that by hypothesis the inclusion
$\eta(\ca)\subset\cb'$ holds. Then
\begin{align*}
\lim_n
\langle B_nRA\xi, C\xi\rangle&=\lim_n
\langle RA\xi, B_nC\xi\rangle=\lim_n
\langle RA\xi, CB_n\xi\rangle\\
&=\langle RA\xi, CT\xi\rangle=\langle RA\xi, TC\xi\rangle=\langle TRA\xi, C\xi\rangle\,,
\end{align*}
and we are done.

That $(1)$ implies $(2)$ can be seen in the exact same way as in the proof of Theorem 2
in \cite{RvD} provided that $\ca$ is replaced with $\eta(\ca)$.
\end{proof}

\begin{rem}\label{realorthogonal}
Taking $\ca=\cb^\wr$ in the above result, one finds that $\ca_s\xi+i(\ca^\wr)_s\xi$ is a dense subspace
of $\ch$ and  $[\eta_U(\ca_s)\xi]^\perp= i \overline{(\ca^\wr)_s\xi}$.
\end{rem}

Our aim now is to use Lemma \ref{Rieffel} to come to a twisted version of the tensor
product commutation theorem. For completeness' sake, we recall that this states that the commutant
of the tensor product of two (or infinitely many) von Neumann algebras equals the tensor product of their commutants.
The first general proof was obtained  in \cite{T} and later simplified in \cite{RvD}.\\

We first need to introduce  graded (or Fermi) products of von Neumann algebras. We directly discuss infinite products.
If $\{(\mathcal{M}_n, U_n): n\in\bn\}$ is a family of $\bz_2$-graded
von Neumann algebras on the Hilbert spaces $\ch_n$ and 
$\boldsymbol{\xi}:=\{\xi_n: n\in \bn\}$ is a sequence of unit vectors
$\xi_n\in\ch_n$ such that $U_n\xi_n=\xi_n$ for every $n\in\bn$, the infinite graded product $\underset{\boldsymbol{\xi}}{\hat{\otimes}}\mathcal{M}_n$ is the 
von Neumann algebra on the Hilbert space $\underset{\boldsymbol{\xi}}{{\otimes}}\ch_n$ generated by operators
$T_1\hat{\otimes}T_2\hat{\otimes}\cdots\hat{\otimes}T_k\hat{\otimes}1\hat{\otimes}1\cdots$
with $T_i\in\mathcal{M}_i$ for $i=1, 2, \ldots, k$ and $k\in\bn$.\\
The condition $U_n\xi_n=\xi_n$, $n\in\bn$, comes in useful to define a self-adjoint unitary on $\underset{\boldsymbol{\xi}}{{\otimes}}\ch_n$  as the infinite product ${\otimes}_{n\in\bn}U_n$. This is
understood as the strong limit of the sequence given by finite products of the type
$${\otimes}_{i=1}^nU_i{\otimes}1{\otimes}1{\otimes}\cdots$$
which is easily verified to be Cauchy in the strong operator topology.
The operator ${\otimes}_{n\in\bn}U_n$ thus obtained is a self-adjoint unitary
as it is the limit of self-adjoint unitaries.  Moreover, $\underset{\boldsymbol{\xi}}{\hat{\otimes}}\mathcal{M}_n$ is invariant
under the adjoint action of  ${\otimes}_{n\in\bn}U_n$. Phrased differently, $(\underset{\boldsymbol{\xi}}{\hat{\otimes}}\mathcal{M}_n,{\otimes}_{n\in\bn}U_n )$ is a $\bz_2$-graded von Neumann algebra.\\

In order to arrive at the general form of our product commutation theorem, we start by attacking the case 
of a product of two von Neumann algebras.

\begin{thm}\label{VanDaele}
If $\mathcal{M}\subset\cb(\ch)$ and $\cn\subset\cb(\ck)$ are von Neumann algebras on $\bz_2$-graded Hilbert spaces
$(\ch, U)$ and $(\ck, V)$ such that $U \mathcal{M} U=\mathcal{M}$ and $V \cn V=\cn$, then
$$(\mathcal{M}\,\hat{\otimes} \cn)^\wr=\mathcal{M}^\wr\,\hat{\otimes} \cn^\wr\, .$$
\end{thm}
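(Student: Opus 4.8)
The plan is to deduce Theorem \ref{VanDaele} from the twisted Rieffel--Van Daele lemma (Lemma \ref{Rieffel}) and its self-dual form (Remark \ref{realorthogonal}), mimicking the way the ordinary commutation theorem is extracted from Theorem 2 of \cite{RvD}. Write $W:=U\otimes V$ for the grading of $\ch\otimes\ck$, and let $\ca$ be the unital $*$-algebra of finite sums $\sum_i A_i\hat{\otimes}B_i$ with $A_i\in\mathcal{M}$, $B_i\in\cn$, and $\cb$ the analogous algebra built from $\mathcal{M}^\wr$ and $\cn^\wr$. Since $\eta_W$ is weakly continuous, $\eta_W(\ca)''=\eta_W(\mathcal{M}\,\hat{\otimes}\,\cn)$, whence $\ca^\wr=(\mathcal{M}\,\hat{\otimes}\,\cn)^\wr$; and because $\cx^{\wr\wr}=\cx$ for every graded von Neumann algebra $\cx$ (an immediate consequence of $\eta_U^2={\rm ad}_U$ and $U\cx U=\cx$), one gets $\cb^{\wr\wr}=\mathcal{M}^\wr\,\hat{\otimes}\,\cn^\wr$. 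Thus condition $(1)$ of Lemma \ref{Rieffel} for the pair $(\ca,\cb)$ is exactly the identity to be proved.

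I would first settle the routine hypotheses, assuming provisionally that $\mathcal{M}$ has an even cyclic and separating vector $\xi_1\in\ch$ and $\cn$ an even cyclic and separating vector $\xi_2\in\ck$. Then $\xi:=\xi_1\otimes\xi_2$ satisfies $W\xi=\xi$ and is cyclic for $\ca$, hence for $\eta_W(\ca)$ by Lemma \ref{cyclic}. The inclusion $\ca\subset\cb^\wr$ required by Lemma \ref{Rieffel}---equivalently the easy half $\mathcal{M}^\wr\,\hat{\otimes}\,\cn^\wr\subset(\mathcal{M}\,\hat{\otimes}\,\cn)^\wr$ of the theorem---is verified directly on homogeneous simple tensors, using that each element of $\mathcal{M}^\wr$ twisted-commutes with $\mathcal{M}$ and each element of $\cn^\wr$ with $\cn$, while carefully tracking the signs intrinsic to the Fermi product and the factor $i$ carried by the twist.

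The core of the argument is then the verification of condition $(3)$ of Lemma \ref{Rieffel}, namely $[\eta_W(\ca_s)\xi]^\perp=i\,\overline{\cb_s\xi}$. My strategy is to reduce this to the one-variable identities furnished by Remark \ref{realorthogonal}: applied to $\mathcal{M}$ with the even cyclic vector $\xi_1$ it gives $[\eta_U(\mathcal{M}_s)\xi_1]^\perp=i\,\overline{(\mathcal{M}^\wr)_s\xi_1}$, and likewise for $\cn$ with $\xi_2$. It remains to glue these two \emph{real}-orthogonality relations on the factors into the corresponding relation on $\ch\otimes\ck$. I expect this to be the main obstacle: it is a computation with closed real subspaces of a tensor product, made delicate by the facts that $\eta_{U\otimes V}$ is not the plain tensor product $\eta_U\otimes\eta_V$, and that the self-adjoint part of the Fermi product does not factor as an algebraic tensor product of self-adjoint parts, the odd--odd sector contributing an extra sign under the involution. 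Once the real subspace $\eta_W(\ca_s)\xi$ is shown to have precisely $i\,\overline{\cb_s\xi}$ as its real-orthogonal complement, Lemma \ref{Rieffel} delivers condition $(1)$, i.e.\ the theorem in the cyclic case.

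It would then remain to discharge the provisional assumption. For general $\mathcal{M}$ and $\cn$ one passes to corners: choosing even vectors $\zeta$ and forming the projections onto $\overline{\mathcal{M}\zeta}$ and $\overline{\mathcal{M}'\zeta}$, which are even because $\mathcal{M}$ is graded and $\zeta$ is even, one reduces $\mathcal{M}$ to an algebra possessing an even cyclic and separating vector. Since these projections are even, the reduction is compatible with the grading, the twist, and the Fermi product, so the general statement follows from the cyclic case by the standard reduction theory for commutants under (even) projections.
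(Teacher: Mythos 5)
Your overall architecture is the same as the paper's: the easy inclusion $\mathcal{M}^\wr\,\hat{\otimes}\,\cn^\wr\subset(\mathcal{M}\,\hat{\otimes}\,\cn)^\wr$ by a sign check on homogeneous tensors, and the hard inclusion via Lemma \ref{Rieffel} applied to the product algebras with the even cyclic product vector $\xi=\xi_1\otimes\xi_2$, factorwise input coming from Remark \ref{realorthogonal}. But the decisive analytic step is left unproved: you explicitly flag the gluing of the two factorwise real-orthogonality relations into one on $\ch\otimes\ck$ as ``the main obstacle'' and do not resolve it. That gluing \emph{is} the theorem; without it the proposal is a plan, not a proof. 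You also aim at the needlessly strong target, condition $(3)$ of Lemma \ref{Rieffel}, i.e.\ the exact identification $[\eta_W(\ca_s)\xi]^\perp=i\,\overline{\cb_s\xi}$. Since $(2)$ and $(3)$ are equivalent, it suffices to prove the density statement $(2)$, and for that one only needs \emph{inclusions}, never an exact description of the self-adjoint part of the Fermi product. Indeed one checks directly that
$$\eta_W\big((\mathcal{M}\,\hat{\otimes}\,\cn)_s\big)\xi\supset \eta_U(\mathcal{M}_s)\xi_1\otimes\eta_V(\cn_s)\xi_2\quad\text{and}\quad
(\mathcal{M}^\wr\,\hat{\otimes}\,\cn^\wr)_s\,\xi\supset(\mathcal{M}^\wr)_s\xi_1\otimes(\cn^\wr)_s\xi_2\,,$$
so your worry about the odd--odd sector and the non-factorization of self-adjoint parts is immaterial at this point. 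Setting $K_1:=\overline{\eta_U(\mathcal{M}_s)\xi_1}$ and $K_2:=\overline{\eta_V(\cn_s)\xi_2}$, Remark \ref{realorthogonal} gives $K_1^\perp=i\,\overline{(\mathcal{M}^\wr)_s\xi_1}$ and $K_2^\perp=i\,\overline{(\cn^\wr)_s\xi_2}$, and all that remains is the density of $K_1\otimes K_2+i\,(K_1^\perp\otimes K_2^\perp)$ in $\ch\otimes\ck$. This is precisely the final lemma of \cite{RvD}: a statement about closed real subspaces of a Hilbert space tensor product, with no reference to gradings or twists, so it applies verbatim here. This is exactly how the paper closes the argument, by citation; the missing step did not have to be invented, it is already in the reference you were emulating.

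Two smaller remarks. First, Lemma \ref{Rieffel} only requires $\xi$ to be cyclic for $\ca$, so your provisional assumption of cyclic \emph{and separating} vectors on the factors is stronger than needed; the paper assumes only even cyclic vectors, justified (as in \cite{RvD}) by the fact that even normal states separate a graded von Neumann algebra, and your corner reduction with even projections is essentially that standard argument. Second, your preliminary reductions are sound: $\ca^\wr=(\mathcal{M}\,\hat{\otimes}\,\cn)^\wr$ for the weakly dense algebraic product, and $\cx^{\wr\wr}=\cx$ for a graded von Neumann algebra (from $\eta_U(\cx')=\eta_U(\cx)'$ and $\eta_U^2={\rm ad}_U$), so condition $(1)$ of Lemma \ref{Rieffel} is indeed the desired identity. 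The gap is only, but crucially, the unexecuted real-subspace computation.
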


\begin{proof}
We start with the inclusion $\mathcal{M}^\wr\,\hat{\otimes} \cn^\wr\subset (\mathcal{M}\,\hat{\otimes} \cn)^\wr$, which
can be checked  by direct computation as follows.  Since for any von Neumann algebra $\cl$ by definition one has $\mathcal{\cl}^\wr=\eta(\cl')$,
we need to show that
$$[\eta_U(M')\hat{\otimes}\eta_V(N'),\eta_{U\,{\otimes} V}(M\hat{\otimes} N) ]=0$$
for every homogeneous $ M\in\mathcal{M}, N\in\cn, M'\in\mathcal{M}', N'\in\cn'$.
This requires an easy but tedious inspection of the signs, which we leave out.

The converse implication is obtained as an application of Lemma \ref{Rieffel}
with $\ca=\mathcal{M}\,\hat{\otimes} \cn$ and $\cb=\mathcal{M}^\wr\,\hat{\otimes} \cn^\wr$. First note that without loss of generality we may assume that $\mathcal{M}$ has an even cyclic
vector $\xi_1\in\ch$ and $\cn$ has an even cyclic vector $\xi_2\in\ck$. This can be seen as in
\cite{RvD} and references therein because even normal
states on a graded von Neumann algebra separate its points.
In particular, $\xi:=\xi_1{\otimes}\xi_2$ is an even cyclic vector for $\mathcal{M}\hat{{\otimes}} \cn$.\\
In order to apply Lemma \ref{Rieffel}, we need to make sure that
$\eta(\mathcal{M}\,\hat{\otimes} \cn)_s\xi+ i(\mathcal{M}^\wr\,\hat{\otimes} \cn^\wr)_s\xi$ is dense in $\ch{\otimes}\ck$, where $\eta:=\eta_U\hat{\otimes} \eta_V$.
Now, as is easily checked, $\eta(\mathcal{M}\,\hat{\otimes} \cn)_s\xi \supset \eta_U(\mathcal{M}_s)\xi_1\,{\otimes} \eta_V(\cn_s)\xi_2$ and $(\mathcal{M}^\wr\,\hat{\otimes} \cn^\wr)_s\xi\supset
(\mathcal{M}^\wr)_s\xi_1\,{\otimes} (\cn^\wr)_s\xi_2$, which means
it is enough to verify that
$$\eta_U(\mathcal{M}_s)\xi_1\,{\otimes} \eta_V(\cn_s)\xi_2+i(\mathcal{M}^\wr)_s\xi_1\,{\otimes} (\cn^\wr)_s\xi_2$$
is dense in $\ch\,{\otimes}\ck$. In light
of Remark \ref{realorthogonal}, we are reconducted to
verifying that
$$\eta_U(\mathcal{M}_s)\xi_1\,{\otimes} \eta_V(\cn_s)\xi_2+
i\big([\eta_U(\mathcal{M}_s)\xi_1]^\perp\,{\otimes}[\eta_V(\cn_s)\xi_2]^\perp\big) $$
is dense, which follows from the final lemma in \cite{RvD}.
\end{proof}

We can finally state the general version.

\begin{thm}\label{infinitecomm}
If $\{(\ch_i, U_i):i\in \bn\}$ is a family of $\bz_2$-graded Hilbert spaces, and  $\cn_i\subset\cb(\ch_i)$ are von
Neumann algebras such that $U_i\cn_iU_i=\cn_i$, $\i\in\bn$, then
$$\big(\underset{\boldsymbol{\xi}}{\hat{\otimes}}\cn_i\big)^\wr=\underset{\boldsymbol{\xi}}{\hat{\otimes}}\cn_i^\wr$$
for any sequence $\boldsymbol{\xi}:=\{\xi_i: i\in \bn\}$ of  unit vectors $\xi_i\in\ch_i$ with
$U_i\xi_i=\xi_i$, $i\in \bn$.
\end{thm}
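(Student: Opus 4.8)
The plan is to bootstrap from the two-factor case, Theorem \ref{VanDaele}, by realizing the infinite product as the inductive limit of its finite stages and exploiting that the twist map is an automorphism. Write $\ch:=\underset{\boldsymbol{\xi}}{{\otimes}}\ch_i$, $U:={\otimes}_{n\in\bn}U_n$, $\cam:=\underset{\boldsymbol{\xi}}{\hat{\otimes}}\cn_i$, and for each $n$ set $\cam_n:=\cn_1\hat{\otimes}\cdots\hat{\otimes}\cn_n\hat{\otimes}1\hat{\otimes}\cdots$, so that $\cam=(\bigcup_n\cam_n)''$. Since $\eta_U$ is a bijective $*$-automorphism of $\cb(\ch)$, it commutes with the formation of commutants and of arbitrary intersections, whence $\cam^\wr=\eta_U(\cam')=\eta_U(\bigcap_n\cam_n')=\bigcap_n\cam_n^\wr$.

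Next I would identify each $\cam_n^\wr$. Splitting $\ch=\ch_{[1,n]}{\otimes}\ch^{(n)}$, with $\ch_{[1,n]}=\ch_1{\otimes}\cdots{\otimes}\ch_n$ and the tail space $\ch^{(n)}=\underset{\boldsymbol{\xi}^{(n)}}{{\otimes}}_{i>n}\ch_i$, the algebra $\cam_n$ is the graded product $(\cn_1\hat{\otimes}\cdots\hat{\otimes}\cn_n)\hat{\otimes}\bc 1_{\ch^{(n)}}$. Applying Theorem \ref{VanDaele} (iterated finitely many times on the first block, and once more to peel off the trivial tail factor), together with $(\bc 1_{\ch^{(n)}})^\wr=\cb(\ch^{(n)})$, yields $\cam_n^\wr=(\cn_1^\wr\hat{\otimes}\cdots\hat{\otimes}\cn_n^\wr)\hat{\otimes}\cb(\ch^{(n)})$. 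Thus the theorem reduces to the identity
$$\bigcap_{n}\,(\cn_1^\wr\hat{\otimes}\cdots\hat{\otimes}\cn_n^\wr)\hat{\otimes}\cb(\ch^{(n)})=\underset{\boldsymbol{\xi}}{\hat{\otimes}}\cn_i^\wr,$$
i.e. to the statement that an infinite graded product coincides with the intersection of its finite-stage enlargements. The inclusion $\supseteq$ is immediate, and it already gives the easy half of the commutation theorem.

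For the nontrivial inclusion $\subseteq$, which I expect to be the main obstacle, I would use the graded slice maps attached to the even tail states. Here the hypothesis $U_i\xi_i=\xi_i$ is essential: it makes each $\xi^{(n)}:={\otimes}_{i>n}\xi_i$ an even unit vector, so that the vector state $\omega^{(n)}:=\langle\,\cdot\,\xi^{(n)},\xi^{(n)}\rangle$ is even and the normal slice $E_n:=\id_{[1,n]}\hat{\otimes}\,\omega^{(n)}$ respects the grading. By construction $E_n$ carries $(\cn_1^\wr\hat{\otimes}\cdots\hat{\otimes}\cn_n^\wr)\hat{\otimes}\cb(\ch^{(n)})$ into $\cn_1^\wr\hat{\otimes}\cdots\hat{\otimes}\cn_n^\wr\hat{\otimes}1$, so $E_n(T)\in\underset{\boldsymbol{\xi}}{\hat{\otimes}}\cn_i^\wr$ for every $T$ in the intersection. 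It then remains to check $E_n(T)\to T$ weakly, and this is where the even reference vectors pay off a second time: testing against the total set of vectors $\zeta=\zeta_{[1,m]}{\otimes}\xi^{(m)}$, $\chi=\chi_{[1,m]}{\otimes}\xi^{(m)}$, a direct computation (the only place where a little sign bookkeeping enters) shows that for $n\geq m$ one has $\langle E_n(T)\zeta,\chi\rangle=\langle T\zeta,\chi\rangle$ exactly, since slicing the tail against the even $\xi^{(n)}$ reproduces the matrix coefficient of $T$ on these reference-stabilized vectors. As the $E_n$ are contractive and such vectors are total in $\ch$, a standard $\varepsilon/3$ argument upgrades this to weak convergence; since $\underset{\boldsymbol{\xi}}{\hat{\otimes}}\cn_i^\wr$ is weakly closed, we conclude $T\in\underset{\boldsymbol{\xi}}{\hat{\otimes}}\cn_i^\wr$.

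The genuinely delicate points are therefore concentrated in the last step: verifying that the graded slice $E_n$ with an even reference vector behaves like an ordinary slice (so that it both lands in the finite-stage algebra and acts trivially on reference-stabilized coefficients), and confirming that these coefficients suffice to determine the weak limit. Everything else is a formal consequence of Theorem \ref{VanDaele} and the automorphism property of $\eta_U$.
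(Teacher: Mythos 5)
Your argument is correct and essentially coincides with the paper's proof: there too the hard inclusion is obtained by compressing $T\in\big(\underset{\boldsymbol{\xi}}{\hat{\otimes}}\cn_i\big)^\wr$ with the projection $P=1_{\ch_1\otimes\cdots\otimes\ch_N}\otimes P_{\xi^{(N)}}$ (your slice $E_N$), showing the compression lies in $(\hat{\otimes}_{i=1}^N\cn_i)^\wr\,\hat{\otimes}\,\bc\,\hat{\otimes}\,\bc\cdots$, identifying this with $\hat{\otimes}_{i=1}^N\cn_i^\wr\,\hat{\otimes}\,\bc\cdots$ by the finitely iterated Theorem \ref{VanDaele}, and verifying that the compression approximates $T$ in the weak operator topology by testing against the reference-stabilized vectors. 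The only organizational difference is that you apply Theorem \ref{VanDaele} to each finite stage \emph{before} slicing (via $\big(\underset{\boldsymbol{\xi}}{\hat{\otimes}}\cn_i\big)^\wr=\bigcap_n \cam_n^\wr$) and prove the slice-map and convergence estimates from scratch, whereas the paper applies it \emph{after} compressing and cites Guichardet for precisely those computations.
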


\begin{proof}
First note that a straightforward induction shows that Theorem \ref{VanDaele} holds for any finite Fermi tensor product.
Again, the inclusion $\underset{\boldsymbol{\xi}}{\hat{\otimes}}\cn_i^\wr\subset\big(\underset{\boldsymbol{\xi}}{\hat{\otimes}}\cn_i\big)^\wr$ is trivially
satisfied. 

For the converse inclusion, take $T$ in $\big(\underset{\boldsymbol{\xi}}{\hat{\otimes}}\cn_i\big)^\wr$. We will show that $T$ sits in the weak closure of $\underset{\boldsymbol{\xi}}{\hat{\otimes}}\cn_i^\wr$. Set $\ch:=\underset{\boldsymbol{\xi}}{{\otimes}}\ch_i$.
Now a neighborhood of $T$ for the weak operator topology is of the form
 $$\mathcal{G}=\{S\in\cb(\ch): |\langle (T-S)x_i, y_i \rangle|<\eps, i=1, 2, \ldots, n\},$$ 
for
some $x_i, y_i\in\ch$, $i=1, 2, \ldots, n$, and $\eps>0$.
By definition of $\ch$, there exists $N\in\bn$ such that
$$\|Px_i-x_i\|\leq \varepsilon\,\,{\rm and}\,\, \|Py_i-y_i\|\leq\eps,\, \textrm{for every}\,\,i=1, 2, \ldots, n,$$
where
$P$ is the projection uniquely determined on simple tensors $\otimes_{i\in\bn} u_i$ in $\ch$ by
$$P({\otimes} u_i)={\otimes}_{i=1}^N u_i{\otimes}
({\otimes}_{i\geq N+1}\langle u_i, \xi_i\rangle\xi_i)\, .$$
The same calculations as in the proof of Proposition 9 on page 34 of \cite{Gui2}
show that $PT$ lies in $(\hat{\otimes}_{i=1}^N\cn_i)^\wr\,\hat{\otimes}\bc\,\hat{\otimes}\bc\cdots$. Since we have
$$(\hat{\otimes}_{i=1}^N\cn_i)^\wr\,\hat{\otimes}\bc\,\hat{\otimes}\bc\cdots=
\hat{\otimes}_{i=1}^N\cn_i^\wr\,\hat{\otimes}\bc\,\hat{\otimes}\bc\cdots\subset
\underset{\boldsymbol{\xi}}{\hat{\otimes}}\cn_i^\wr$$
the thesis will be arrived at as long as we make sure that $PT\in\mathcal{G}$.
This follows exactly as in the above reference.
\end{proof}

As an easy application of the theorem above, we provide the following result, where pureness of product states is
addressed.

\begin{prop}\label{pure}
Let $(\ga_i, \theta_i)$ be $\bz_2$-graded $C^*$-algebras, and let $\om_i\in\cs(\ga_i)$ be pure states, $i\in\bn$.
Suppose
all of these states are even but one, say $\om_1$. If $\pi_{\om_1}$ and $\pi_{\om_1\circ\theta}$
are unitarily equivalent, then the product state $\times_i \om_i$ is pure as well.
\end{prop}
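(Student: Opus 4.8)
The plan is to reduce the assertion to a commutant computation and then invoke the twisted commutation theorems of this section. Recall that a state is pure precisely when its GNS representation is irreducible, i.e.\ when the von Neumann algebra it generates has trivial commutant. Since the twist automorphism $\eta_W$ attached to a grading unitary $W$ is a bijection of $\cb(\ch)$ fixing the scalars and $\car^\wr=\eta_W(\car')$ for every subset $\car$, one has $\car'=\bc 1$ if and only if $\car^\wr=\bc 1$. Hence, writing $\om:=\times_i\om_i$, it suffices to prove that the \emph{twisted} commutant of $\pi_\om(\,\cdot\,)''$ is trivial.

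\emph{The even tail.} First I would group the even factors together. Set $\gb:=\hat{\otimes}_{\rm min}^{i\geq 2}\ga_i$ and $\psi:=\underset{i\geq 2}{\times}\om_i$. By the product-state proposition $\psi$ is even, and I claim it is pure: its GNS representation is $\hat{\otimes}_{i\geq 2}\pi_{\om_i}$ with even cyclic vector $\xi_\psi:=\otimes_{i\geq 2}\xi_{\om_i}$ (Remark \ref{prodrep}), so it generates $\hat{\otimes}_{i\geq 2}\mathcal{M}_i$ with $\mathcal{M}_i:=\pi_{\om_i}(\ga_i)''=\cb(\ch_{\om_i})$ by purity of $\om_i$. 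As each $\mathcal{M}_i^\wr=\eta_{U_i}(\mathcal{M}_i')=\bc 1$, Theorem \ref{infinitecomm} gives $\big(\hat{\otimes}_{i\geq 2}\mathcal{M}_i\big)^\wr=\hat{\otimes}_{i\geq 2}\mathcal{M}_i^\wr=\bc 1$, whence $\psi$ is pure. Write $\cn:=\pi_\psi(\gb)''=\cb(\ch_\psi)$ and let $V$ be the grading unitary fixing $\xi_\psi$.

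\emph{The odd factor and the main obstacle.} By associativity $\om=\om_1\times\psi$ on $\ga_1\hat{\otimes}\gb$. Since $\om_1$ is pure, $\pi_{\om_1}$ and $\pi_{\om_1\circ\theta_1}$ are non-disjoint (for pure states this is the same as unitary equivalence), so Proposition \ref{ArMor} furnishes a self-adjoint unitary $U_1\in\pi_{\om_1}(\ga_{1,+})''$ implementing $\theta_1$, making $(\ch_{\om_1},U_1)$ a graded Hilbert space. The delicate point, and what I expect to be the main obstacle, is that $\xi_{\om_1}$ need not be even (indeed $U_1\xi_{\om_1}$ induces $\om_1\circ\theta_1\neq\om_1$), so the infinite-product grading machinery cannot be applied to all factors simultaneously; this is exactly why I isolated the even tail. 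It then remains to verify that $\pi_{\om_1}\hat{\otimes}\pi_\psi$, with the vector $\xi:=\xi_{\om_1}\otimes\xi_\psi$, is the GNS representation of $\om$. For homogeneous $a\in\ga_1$, $b\in\gb$, computing $\big(\pi_{\om_1}(a)\hat{\otimes}\pi_\psi(b)\big)(\xi_{\om_1}\otimes\xi_\psi)$ after splitting $\xi_{\om_1}$ into homogeneous parts yields $\pi_{\om_1}(a)\xi_{\om_1}\otimes\pi_\psi(b)\xi_\psi$ when $b$ is even, and a vector whose inner product with $\xi$ carries the factor $\psi(b)=0$ when $b$ is odd; thus $\langle\pi(\,\cdot\,)\xi,\xi\rangle=\om_1\times\psi=\om$, the vanishing of the odd contributions being forced precisely by the evenness of $\psi$. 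Cyclicity of $\xi$ follows because $\pi_{\om_1}(\ga_1)\xi_{\om_1}$ and $\pi_{\om_1}(\ga_1)U_1\xi_{\om_1}=U_1\pi_{\om_1}(\ga_1)\xi_{\om_1}$ are both dense in $\ch_{\om_1}$, while $\pi_\psi(\gb_\pm)\xi_\psi$ exhaust the even and odd parts of $\ch_\psi$.

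\emph{Conclusion.} With this identification $\pi_\om(\,\cdot\,)''=\mathcal{M}_1\hat{\otimes}\cn$, where $\mathcal{M}_1=\pi_{\om_1}(\ga_1)''=\cb(\ch_{\om_1})$. Since $\mathcal{M}_1^\wr=\eta_{U_1}(\mathcal{M}_1')=\bc 1$ and $\cn^\wr=\bc 1$, the two-factor twisted commutation Theorem \ref{VanDaele} gives $(\mathcal{M}_1\hat{\otimes}\cn)^\wr=\mathcal{M}_1^\wr\hat{\otimes}\cn^\wr=\bc 1$. By the first paragraph this forces $\pi_\om(\,\cdot\,)'=\bc 1$, so $\pi_\om$ is irreducible and $\om=\times_i\om_i$ is pure.
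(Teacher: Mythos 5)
Your proof is correct, but it takes a genuinely different route from the paper's. The paper disposes of the statement in one line: it identifies $\pi_{\times_i\om_i}$ with the infinite Fermi product $\underset{\boldsymbol{\xi}}{\hat{\otimes}}\,\pi_{\om_i}$, invoking Proposition \ref{ArMor} to supply the grading unitary on $\ch_{\om_1}$, and then applies Theorem \ref{infinitecomm} to all factors at once. You instead split off the even tail $\gb=\hat{\otimes}_{\rm min}^{i\geq 2}\ga_i$, prove purity of $\psi=\times_{i\geq 2}\om_i$ via Theorem \ref{infinitecomm}, and handle the single non-even factor through the two-factor Theorem \ref{VanDaele}. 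Your restructuring actually buys something: Theorem \ref{infinitecomm} as stated hypothesizes $U_i\xi_i=\xi_i$ for \emph{every} $i$, and this fails at $i=1$ when $\om_1$ is not even (a vector fixed by $U_1$ would force $\om_1\circ\theta_1=\om_1$), so the paper's one-shot application is formally outside the scope of the theorem as written -- the point being that the hypothesis is only needed on all but finitely many factors, since only the tail enters the construction of $\otimes_n U_n$ and of the approximating projection $P$. Your decomposition applies each theorem strictly within its stated hypotheses, since Theorem \ref{VanDaele} imposes no condition on cyclic vectors, and your explicit verification that $(\pi_{\om_1}\hat{\otimes}\pi_\psi,\,\xi_{\om_1}\otimes\xi_\psi)$ is the GNS pair for $\om_1\times\psi$ -- with the odd contributions killed by evenness of $\psi$ -- makes precise what the paper leaves implicit. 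One cosmetic remark: the identity $\pi_{\om_1}(\ga_1)U_1\xi_{\om_1}=U_1\pi_{\om_1}(\ga_1)\xi_{\om_1}$ you use for cyclicity is an equality of sets, obtained from $\pi_{\om_1}(a)U_1=U_1\pi_{\om_1}(\theta_1(a))$ together with $\theta_1(\ga_1)=\ga_1$, not an operatorwise commutation; as a set identity it is exactly what the density argument requires.
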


\begin{proof}
It suffices to note that under the above hypotheses $\pi_{\times_i \om_i}$
is still (unitarily equivalent to) $\underset{\boldsymbol{\xi}}{\hat{\otimes}}\pi_{\om_i}$
with $\boldsymbol{\xi}:=\{\xi_{\om_i}\}_{i\in\bn}$, see Proposition \ref{ArMor}, which means
Theorem \ref{infinitecomm} applies.
\end{proof}

We are going to further apply Theorem \ref{infinitecomm} to infer factoriality
of an infinite product of even factorial  states. 
To do so, we first establish a couple of related results.

\begin{lem}\label{union}
Let $(\ch, U)$ be a $\bz_2$-graded Hilbert space and let $\cn_1, \cn_2\subset \cb(\ch)$ be
von Neumann algebras with $U\cn_iU=\cn_i$, $i=1, 2$, then
$$(\cn_1\cap\cn_2)^\wr=\cn_1^\wr\vee\cn_2^\wr\quad {\rm and}\quad (\cn_1\vee\cn_2)^\wr=\cn_1^\wr\cap\cn_2^\wr\, .$$
\end{lem}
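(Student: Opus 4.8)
The plan is to exploit the fact that the twist $\eta_U$, although introduced only as a graded map, is in truth a genuine $*$-automorphism of the whole of $\cb(\ch)$ that is continuous for the weak operator topology, as is its inverse. First I would record this. Writing $T=T_++T_-$ for the decomposition into $\ad_U$-even and $\ad_U$-odd parts, one has $\eta_U(T)=T_++iUT_-$, and a direct sign check — using $US_+=S_+U$, $US_-=-S_-U$ and the resulting $US_-U=-S_-$ for odd $S_-$ — gives $\eta_U(ST)=\eta_U(S)\eta_U(T)$, while the self-adjointness of $U$ yields $\eta_U(T^*)=\eta_U(T)^*$. Hence $\eta_U\in{\rm Aut}(\cb(\ch))$. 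Since $T_\pm=\tfrac12(T\pm UTU)$ depend weakly continuously on $T$ and left multiplication by $iU$ is weakly continuous, both $\eta_U$ and its inverse $\eta_U^{-1}=\eta_U^{3}=\eta_U\circ\ad_U$ are homeomorphisms for the weak operator topology, so $\eta_U$ carries von Neumann algebras to von Neumann algebras and respects weak closures.

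Next I would reduce the two identities to the classical commutation theorem for ordinary commutants. From the bicommutant theorem one has immediately $(\cn_1\vee\cn_2)'=(\cn_1\cup\cn_2)'=\cn_1'\cap\cn_2'$, and applying this to the pair $\cn_1',\cn_2'$ and taking commutants gives $(\cn_1\cap\cn_2)'=\cn_1'\vee\cn_2'$. I would also note that $U\cn_iU=\cn_i$ forces $U\cn_i'U=\cn_i'$, because $\ad_U$ is an automorphism and hence preserves commutants; this guarantees that all the twisted commutants appearing are honest $\bz_2$-graded von Neumann algebras.

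The two formulas then follow by transporting these identities through $\eta_U$, using $\cn^\wr=\eta_U(\cn')$. Being a bijection, $\eta_U$ commutes with intersections; being a weak-operator homeomorphism that is also an algebra isomorphism, it maps the von Neumann algebra generated by $\cn_1'\cup\cn_2'$ onto the one generated by $\eta_U(\cn_1')\cup\eta_U(\cn_2')=\cn_1^\wr\cup\cn_2^\wr$, i.e. it commutes with $\vee$. Consequently
\[
(\cn_1\cap\cn_2)^\wr=\eta_U\big((\cn_1\cap\cn_2)'\big)=\eta_U(\cn_1'\vee\cn_2')=\eta_U(\cn_1')\vee\eta_U(\cn_2')=\cn_1^\wr\vee\cn_2^\wr,
\]
and likewise
\[
(\cn_1\vee\cn_2)^\wr=\eta_U\big((\cn_1\vee\cn_2)'\big)=\eta_U(\cn_1'\cap\cn_2')=\eta_U(\cn_1')\cap\eta_U(\cn_2')=\cn_1^\wr\cap\cn_2^\wr.
\]

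The one place that needs genuine care — and which I expect to be the main obstacle — is the verification that $\eta_U$ is a $*$-automorphism rather than merely a graded linear map: the multiplicativity rests on the sign identity $US_-U=-S_-$ for odd $S_-$, and it is precisely the factor $i$ in the definition of $\eta_U$ that conspires with this sign so that $\eta_U(ST)=\eta_U(S)\eta_U(T)$ holds. Once this, together with the accompanying weak continuity of $\eta_U$ and $\eta_U^{-1}$, is in hand, the rest is a formal transport of the classical commutation theorem and requires no further analysis; in particular no appeal to the tensor-product results of Theorem~\ref{VanDaele} or Theorem~\ref{infinitecomm} is needed.
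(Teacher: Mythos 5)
Your proof is correct and follows essentially the same route as the paper: both reduce the identities to the classical lattice anti-isomorphism of the ordinary commutant, $(\cn_1\cap\cn_2)'=\cn_1'\vee\cn_2'$, and then transport through $\eta_U$ via $\cn^\wr=\eta_U(\cn')$, using that $\eta_U$ distributes over $\vee$ and $\cap$. The only difference is that you explicitly verify what the paper leaves implicit, namely that $\eta_U$ is a weak-operator bicontinuous $*$-automorphism of $\cb(\ch)$ (the sign check with $US_-U=-S_-$ is exactly right), which is a worthwhile addition but not a different argument.
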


\begin{proof}
The first equality is arrived at through the chain of equalities below
\begin{align*}
(\cn_1\cap\cn_2)^\wr&=\eta_U((\cn_1\cap\cn_2)')=\eta_U(\cn_1'\vee\cn_2')\\
&=\eta_U(\cn_1')\vee\eta_U(\cn_2')=\cn_1^\wr\vee\cn_2^\wr\, .
\end{align*}
The second follows analogously.
\end{proof}

\begin{prop}
Let $(\ch, U)$ and $(\ck, V)$ be $\bz_2$-graded Hilbert spaces. If
$\mathcal{M}_i\subset\cb(\ch)$ and $\cn_i\subset\cb(\ck)$, $i=1,2$, are von Neumann algebras
such that $U\mathcal{M}_iU=\mathcal{M}_i$ and $V\cn_i V=\cn_i$, $i=1, 2$, then
$$(\mathcal{M}_1\,\hat{\otimes}\cn_1)\cap (\mathcal{M}_2\,\hat{\otimes}\cn_2) =
(\mathcal{M}_1\cap\mathcal{M}_2)\hat{\otimes}(\cn_1\cap\cn_2)
$$ and
$$(\mathcal{M}_1\,\hat{\otimes}\cn_1)\vee (\mathcal{M}_2\,\hat{\otimes}\cn_2) =
(\mathcal{M}_1\vee\mathcal{M}_2)\hat{\otimes}(\cn_1\vee\cn_2)\, .
$$
\end{prop}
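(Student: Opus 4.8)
The plan is to prove the join identity directly by a generation argument, and then to obtain the intersection identity from it by taking twisted commutants, exploiting the duality furnished by the twisted commutation theorem (Theorem \ref{VanDaele}) together with the de Morgan laws for $\wr$ (Lemma \ref{union}). As a preliminary step, I would record that for any two graded von Neumann algebras $\cp,\cq$ one has $(\cp\,\hat{\otimes}\,1)\vee(1\,\hat{\otimes}\,\cq)=\cp\,\hat{\otimes}\,\cq$. The inclusion $\subset$ is immediate, since $\cp\,\hat{\otimes}\,1$ and $1\,\hat{\otimes}\,\cq$ both sit inside $\cp\,\hat{\otimes}\,\cq$. For $\supset$ it is enough to observe that every generator $P\,\hat{\otimes}\,Q$ equals the operator product $(P\,\hat{\otimes}\,1)(1\,\hat{\otimes}\,Q)$: a short inspection of the signs, using that the identity is even, shows this equality at the level of operators on $\ch\otimes\ck$ for homogeneous $P,Q$, and the general case follows by linearity.

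Next I would prove the join formula. The inclusion $\subset$ holds trivially, because $\mathcal{M}_i\,\hat{\otimes}\,\cn_i\subset(\mathcal{M}_1\vee\mathcal{M}_2)\,\hat{\otimes}\,(\cn_1\vee\cn_2)$ for $i=1,2$. For the reverse inclusion, set $\mathcal{R}:=(\mathcal{M}_1\,\hat{\otimes}\,\cn_1)\vee(\mathcal{M}_2\,\hat{\otimes}\,\cn_2)$. Then $\mathcal{R}$ contains $\mathcal{M}_1\,\hat{\otimes}\,1$ and $\mathcal{M}_2\,\hat{\otimes}\,1$, hence their join $(\mathcal{M}_1\vee\mathcal{M}_2)\,\hat{\otimes}\,1$; likewise $\mathcal{R}$ contains $1\,\hat{\otimes}\,\cn_1$ and $1\,\hat{\otimes}\,\cn_2$, hence their join $1\,\hat{\otimes}\,(\cn_1\vee\cn_2)$, where one uses that $\cn\mapsto 1\,\hat{\otimes}\,\cn$ is a normal $*$-isomorphism onto its image and so respects joins. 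Applying the preliminary step with $\cp=\mathcal{M}_1\vee\mathcal{M}_2$ and $\cq=\cn_1\vee\cn_2$ then gives $\mathcal{R}\supset(\mathcal{M}_1\vee\mathcal{M}_2)\,\hat{\otimes}\,(\cn_1\vee\cn_2)$, as required.

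Finally, I would derive the intersection formula by the twisted commutant. Using Lemma \ref{union}, then Theorem \ref{VanDaele}, then the join formula just established, and then Lemma \ref{union} and Theorem \ref{VanDaele} once more in reverse, one computes
\begin{align*}
\big[(\mathcal{M}_1\,\hat{\otimes}\,\cn_1)\cap(\mathcal{M}_2\,\hat{\otimes}\,\cn_2)\big]^\wr
&=(\mathcal{M}_1^\wr\vee\mathcal{M}_2^\wr)\,\hat{\otimes}\,(\cn_1^\wr\vee\cn_2^\wr)\\
&=\big[(\mathcal{M}_1\cap\mathcal{M}_2)\,\hat{\otimes}\,(\cn_1\cap\cn_2)\big]^\wr.
\end{align*}
Taking the twisted commutant once more and invoking $\mathcal{M}^{\wr\wr}=\mathcal{M}$ for graded von Neumann algebras --- which follows from $\eta_U^2={\rm ad}_U$ together with the relation $\eta_U(\cn)'=\eta_U(\cn')$ --- yields the claimed equality of the two intersections.

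I expect the main obstacle to be the sign bookkeeping intrinsic to the Fermi product: verifying the operator identity $(P\,\hat{\otimes}\,1)(1\,\hat{\otimes}\,Q)=P\,\hat{\otimes}\,Q$, checking that $\cn\mapsto 1\,\hat{\otimes}\,\cn$ is an isomorphic normal copy respecting joins, and ensuring that every algebra in play is genuinely graded (stable under ${\rm ad}_{U\otimes V}$) so that the double twisted commutant indeed returns it.
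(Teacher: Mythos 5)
Your proposal is correct and follows essentially the same route as the paper: the intersection identity is obtained by twisted-commutant duality, combining Theorem \ref{VanDaele} with the de Morgan laws of Lemma \ref{union} and the relation $\mathcal{M}^{\wr\wr}=\mathcal{M}$ for graded algebras, while the join identity is checked directly. The only difference is organizational --- you prove the join equality first, via the factorization $(P\,\hat{\otimes}\,1)(1\,\hat{\otimes}\,Q)=P\,\hat{\otimes}\,Q$, and then run the duality chain as a chain of equalities, whereas the paper runs the chain with a one-sided inclusion $(\mathcal{M}_1^\wr\vee\mathcal{M}_2^\wr)\,\hat{\otimes}\,(\cn_1^\wr\vee\cn_2^\wr)\subset(\mathcal{M}_1^\wr\,\hat{\otimes}\,\cn_1^\wr)\vee(\mathcal{M}_2^\wr\,\hat{\otimes}\,\cn_2^\wr)$ and simply declares the join identity verifiable directly; your version fills in exactly the sign bookkeeping and the normality of the leg embeddings that the paper leaves implicit.
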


\begin{proof}
As for the first equality, only the inclusion
$$(\mathcal{M}_1\,\hat{\otimes}\cn_1)\cap (\mathcal{M}_2\,\hat{\otimes}\cn_2)\subset
(\mathcal{M}_1\cap\mathcal{M}_2)\hat{\otimes}(\cn_1\cap\cn_2)\
$$
needs to be dealt with, for the converse inclusion is trivially verified.\\
To this aim, we show that
$$\big((\mathcal{M}_1\cap\mathcal{M}_2)\hat{\otimes}(\cn_1\cap\cn_2)\big)^\wr\subset\big((\mathcal{M}_1\,\hat{\otimes}\cn_1)\cap (\mathcal{M}_2\,\hat{\otimes}\cn_2)\big)^\wr\, .
$$
Now by Theorem \ref{VanDaele} and Lemma \ref{union} we have
\begin{align*}
\big((\mathcal{M}_1\cap\mathcal{M}_2)\hat{\otimes}(\cn_1\cap\cn_2)\big)^\wr&=
(\mathcal{M}_1\cap\mathcal{M}_2)^\wr\hat{\otimes}(\cn_1\cap\cn_2)^\wr\\
&=(\mathcal{M}_1^\wr\vee\mathcal{M}_2^\wr)\hat{\otimes}(\cn_1^\wr\vee\cn_2^\wr)\\
&\subset (\mathcal{M}_1^\wr\,\hat{\otimes}\cn_1^\wr)\vee (\mathcal{M}_2^\wr\,\hat{\otimes}\cn_2^\wr)\\
&=(\mathcal{M}_1\hat{\otimes} \cn_1)^\wr\vee(\mathcal{M}_2\hat{\otimes} \cn_2)^\wr\\
&=\big((\mathcal{M}_1\hat{\otimes} \cn_1)\cap (\mathcal{M}_2\hat{\otimes} \cn_2)\big)^\wr\, .
\end{align*}
In the second equality both inclusions can be verified directly.
\end{proof}
\begin{rem}\label{rem4.9}
By using Theorem \ref{infinitecomm} one sees that the first equality of the above result
holds with infinite graded tensor products as well.
\end{rem}

Before stating our next result, we recall that a factor is a von Neumann algebra with trivial center.

\begin{prop}\label{factor}
Under the same hypotheses as in Theorem \ref{infinitecomm}, an infinite Fermi tensor product is
 a factor if and only if each component is a factor.
\end{prop}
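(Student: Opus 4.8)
The plan is to test factoriality through the \emph{twisted} commutant rather than the ordinary one, since it is the twisted commutant that is compatible with Fermi products. Write $\cam:=\underset{\boldsymbol{\xi}}{\hat{\otimes}}\cn_i$ and let $U:={\otimes}_{n\in\bn}U_n$ be the grading unitary of the product; for a $\bz_2$-graded von Neumann algebra $(\cl,W)$ call $\cl\cap\cl^\wr$ its \emph{twisted centre}. The heart of the argument is the following computation: by Theorem \ref{infinitecomm} one has $\cam^\wr=\underset{\boldsymbol{\xi}}{\hat{\otimes}}\cn_i^\wr$, so applying the infinite intersection formula of Remark \ref{rem4.9} with the second family taken to be $\{\cn_i^\wr\}$ gives
\begin{equation*}
\cam\cap\cam^\wr=\Big(\underset{\boldsymbol{\xi}}{\hat{\otimes}}\cn_i\Big)\cap\Big(\underset{\boldsymbol{\xi}}{\hat{\otimes}}\cn_i^\wr\Big)=\underset{\boldsymbol{\xi}}{\hat{\otimes}}\,(\cn_i\cap\cn_i^\wr).
\end{equation*}
Thus the twisted centre of the product is the Fermi product of the twisted centres of the factors, and this equals $\bc 1$ exactly when every $\cn_i\cap\cn_i^\wr=\bc 1$ (if one factor carried a non-scalar element, tensoring it with the identities would produce a non-scalar element of the product).

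The second ingredient reconciles the twisted centre with the ordinary one, for which I would record two facts about a graded von Neumann algebra $(\cl,W)$. (a) If $\cl$ is a factor then $\cl\cap\cl^\wr=\bc 1$: even elements of $\cl^\wr$ already lie in $\cl'$, so $(\cl\cap\cl^\wr)_+\subseteq\cz(\cl)=\bc 1$, while an odd $S\in\cl\cap\cl^\wr$ satisfies $WS\in\cl'$, whence $S^*S\in\cl\cap\cl'=\bc 1$ and $SS^*=\ad_W(S^*)\,S=-S^*S$, and positivity of both sides forces $S=0$. (b) If $W\in\cl$ then $\cz(\cl)=\cl\cap\cl^\wr$ and both are even: an odd $T\in\cz(\cl)$ lies in $\cl'$ and so commutes with $W\in\cl$, while oddness gives $WT=-TW$, so $T=0$; an odd $S\in\cl\cap\cl^\wr$ has $WS\in\cl'$ commuting with $W$, giving $WSW=S$, i.e. $S$ even, so $S=0$; since the even parts of $\cl'$ and $\cl^\wr$ always coincide, the two (now even) centres agree. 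For the product, $W=U$ does lie in $\cam$, being the strong limit of the operators $U_1\hat{\otimes}\cdots\hat{\otimes}U_n\hat{\otimes}1\cdots$, which sit in $\cam$ as soon as each $U_n\in\cn_n$ (the situation arising in the applications, e.g. via Proposition \ref{ArMor}); likewise $U_i\in\cn_i$, so (b) applies both to $\cam$ and to each $\cn_i$.

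Assembling these, $\cam$ is a factor iff $\cz(\cam)=\bc 1$, iff (by (b)) $\cam\cap\cam^\wr=\bc 1$, iff (by the displayed identity) every $\cn_i\cap\cn_i^\wr=\bc 1$, iff (by (a) and (b)) every $\cn_i$ is a factor. Note that (a) alone gives the implication ``each factor $\Rightarrow$ product factor'' with no hypothesis on the grading unitaries, and is the only direction needed to conclude factoriality of an infinite product of even factorial states.

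The step I expect to be the main obstacle is precisely (b), i.e.\ controlling the \emph{odd} part of the centre. The twisted commutation theorem only ever produces information about $\cam\cap\cam^\wr$, and without knowing that the grading unitary sits inside the algebra the ordinary centre can acquire an odd part that the twisted centre does not see (so that ordinary and twisted factoriality genuinely differ). I would therefore isolate the membership $U\in\cam$, equivalently $U_i\in\cn_i$, as the structural input that makes the ordinary and twisted notions of factor agree, and verify it in the standard way for the infinite products at hand.
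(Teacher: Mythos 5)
Your reduction to twisted objects is the right instinct, and items (a), (b) and the displayed identity $\cam\cap\cam^\wr=\underset{\boldsymbol{\xi}}{\hat{\otimes}}(\cn_i\cap\cn_i^\wr)$ are all correct as stated. The genuine gap is that you test factoriality on the wrong twisted set. The paper instead computes
\begin{equation*}
\eta_U(\cam)\cap\cam^\wr=\underset{\boldsymbol{\xi}}{\hat{\otimes}}\bigl(\eta_{U_i}(\cn_i)\cap\cn_i^\wr\bigr)\,,
\end{equation*}
again by Theorem \ref{infinitecomm} and Remark \ref{rem4.9} (applicable because each $\eta_{U_i}(\cn_i)$ is $\ad_{U_i}$-invariant), and the whole point is that $\cam^\wr=\eta_U(\cam')$ with $\eta_U$ a $*$-automorphism of $\cb(\ch)$, so that $\eta_U(\cam)\cap\cam^\wr=\eta_U(\cam\cap\cam')=\eta_U(\cz(\cam))$, and likewise $\eta_{U_i}(\cn_i)\cap\cn_i^\wr=\eta_{U_i}(\cz(\cn_i))$. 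Triviality of this set is therefore \emph{unconditionally} equivalent to ordinary factoriality, with no hypothesis on where the grading unitaries live, and both directions of the proposition drop out of the single displayed computation. Your set $\cam\cap\cam^\wr$ does not have this property, and the membership $U_i\in\cn_i$ you isolate in (b) to repair it is not among the hypotheses of Theorem \ref{infinitecomm}, which assumes only $U_i\cn_iU_i=\cn_i$ and $U_i\xi_i=\xi_i$; nor does it hold in the intended application, since for the GNS representation of an even state the implementing unitary $V_{\th,\f}$ need not belong to $\pi_\f(\ga)''$. So your argument proves the proposition only in a special case.

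Moreover, your fallback claim that (a) alone yields ``each component a factor $\Rightarrow$ product a factor'' is a non sequitur: (a) points the wrong way. From $\cn_i\cap\cn_i^\wr=\bc 1$ for all $i$ you obtain $\cam\cap\cam^\wr=\bc 1$, but passing from this to $\cz(\cam)=\bc 1$ is exactly the step you yourself identify as failing when $U\notin\cam$. A concrete obstruction: on $\bc^2$ graded by $U=\mathrm{diag}(1,-1)$, let $\cl$ be the algebra generated by the odd symmetry $X=\begin{pmatrix}0&1\\1&0\end{pmatrix}$; then $\cl$ is $\ad_U$-invariant and maximal abelian, $\cl^\wr=\eta_U(\cl)$ is spanned by $1$ and $iUX$, so $\cl\cap\cl^\wr=\bc 1$ while $\cz(\cl)=\cl\neq\bc 1$. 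This $\cl$ is of course not a product of factors, but it kills the inference ``twisted centre trivial $\Rightarrow$ factor'' on which your forward direction silently relies. Replacing $\cam\cap\cam^\wr$ by $\eta_U(\cam)\cap\cam^\wr$ throughout repairs everything and renders (a), (b) and the discussion of odd central elements unnecessary.
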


\begin{proof}
Set $\mathcal{R}:=\underset{\boldsymbol{\xi}}{\hat{\otimes}}\,\mathcal{R}_i$, where
the $\mathcal{R}_i$'s are all factors.  With $U=\otimes_{i\in\bn} U_i$, 
thanks to Theorem \ref{infinitecomm} and Remark \ref{rem4.9} we have
\begin{align*}
\eta_U(\mathcal{R})\cap \mathcal{R}^\wr=\underset{\boldsymbol{\xi}}{\hat{\otimes}}\, \eta_{U_i}(\mathcal{R}_i)\cap \underset{\boldsymbol{\xi}}{\hat{\otimes}}\, \mathcal{R}_i^\wr=
\underset{\boldsymbol{\xi}}{\hat{\otimes}}\,
 \big(\eta_{U_i}(\mathcal{R}_i)\cap \mathcal{R}_i^\wr\big)=\bc\,,
\end{align*}
which shows that $\mathcal{R}$ is still a factor. The converse implication is obvious.
\end{proof}

A representation $\pi: \ga\rightarrow\cb(\ch)$ of a given $C^*$-algebra
is said to be factorial if $\pi(\ga)''$ is a factor, {\it i.e.}
$\pi(\ga)''\cap\pi(\ga)'=\bc 1$. A state $\varphi$ of a $C^*$-algebra is factorial if its
GNS representation is. Moreover, the type of a factorial state is by definition the same as the type of the factor
generated by its GNS representation.

\begin{prop}
Let $(\ga_i, \theta_i)$ be $\bz_2$-graded $C^*$-algebras, and let $\om_i$ be  in $\cs_+(\ga_i)$,  $i\in\bn$.
The product state $\om=\times_i \om_i$ is factorial if and only if each $\om_i$ is.
\end{prop}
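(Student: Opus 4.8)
The plan is to reduce the statement to Proposition \ref{factor} by identifying the von Neumann algebra generated by the GNS representation of the product state with an infinite Fermi tensor product of the component GNS von Neumann algebras. Recall first that, by definition, a state $\om_i\in\cs_+(\ga_i)$ is factorial exactly when $\mathcal{M}_i:=\pi_{\om_i}(\ga_i)''$ is a factor, and $\om=\times_i\om_i$ is factorial exactly when $\pi_\om\big(\hat{\otimes}_{\rm min}^{i\in\bn}\ga_i\big)''$ is a factor. So everything comes down to comparing factoriality of $\mathcal{M}_i$ with that of the ambient algebra generated by $\pi_\om$.

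By Remark \ref{prodrep}, the GNS representation $(\ch_\om,\pi_\om,\xi_\om)$ of $\om$ is unitarily equivalent to $\underset{i\in\bn}{\hat{\otimes}}\,\pi_{\om_i}$ acting on $\underset{\boldsymbol{\xi}}{{\otimes}}\,\ch_{\om_i}$, with $\boldsymbol{\xi}=\{\xi_{\om_i}\}_{i\in\bn}$ and cyclic vector $\xi=\underset{i\in\bn}{{\otimes}}\,\xi_{\om_i}$. Since each $\om_i$ is even, the grading unitary $U_i:=V_{\theta_i,\om_i}$ fixes $\xi_{\om_i}$, and $\ad_{U_i}$ implements $\theta_i$ in the GNS representation, whence $U_i\mathcal{M}_iU_i=\mathcal{M}_i$. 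Thus each $(\mathcal{M}_i,U_i)$ is a $\bz_2$-graded von Neumann algebra satisfying precisely the hypotheses of Theorem \ref{infinitecomm}, and the infinite graded product $\underset{\boldsymbol{\xi}}{\hat{\otimes}}\mathcal{M}_i$ is well defined.

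Next I would establish the identification $\pi_\om\big(\hat{\otimes}_{\rm min}^{i\in\bn}\ga_i\big)''=\underset{\boldsymbol{\xi}}{\hat{\otimes}}\mathcal{M}_i$. The inclusion $\subseteq$ is immediate, because every generator $\pi_{\om_1}(a_1)\hat{\otimes}\cdots\hat{\otimes}\pi_{\om_n}(a_n)\hat{\otimes}1\hat{\otimes}1\cdots$ of the left-hand side already sits in the right-hand side. For the reverse inclusion one uses that $\pi_{\om_i}(\ga_i)$ is $\sigma$-strongly dense in $\mathcal{M}_i$ together with the joint strong continuity (on bounded sets) of the Fermi tensor product of finitely many operators, so that each generator $T_1\hat{\otimes}\cdots\hat{\otimes}T_k\hat{\otimes}1\hat{\otimes}1\cdots$ of $\underset{\boldsymbol{\xi}}{\hat{\otimes}}\mathcal{M}_i$, with $T_i\in\mathcal{M}_i$, is a strong-operator limit of elements of $\pi_\om\big(\hat{\otimes}_{\rm min}^{i\in\bn}\ga_i\big)$.

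With this identification in hand, Proposition \ref{factor} applies verbatim: the infinite Fermi tensor product $\underset{\boldsymbol{\xi}}{\hat{\otimes}}\mathcal{M}_i$ is a factor if and only if every $\mathcal{M}_i$ is a factor. Translating back through the definition of a factorial state yields that $\om=\times_i\om_i$ is factorial if and only if each $\om_i$ is. I expect the only step requiring genuine care to be the identification of the bicommutant with the infinite graded tensor product, the subtlety being the interchange of the graded tensor product with the passage to the weak closure; the remaining steps are bookkeeping that reduces directly to Remark \ref{prodrep} and Proposition \ref{factor}.
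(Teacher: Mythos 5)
Your proposal is correct and follows exactly the paper's route: the paper's proof is the one-line ``a straightforward application of Remark \ref{prodrep} and Proposition \ref{factor}'', which is precisely your reduction via the unitary equivalence $\pi_\om\cong\underset{i\in\bn}{\hat{\otimes}}\,\pi_{\om_i}$ and factoriality of infinite graded products. The only difference is that you make explicit the identification $\pi_\om\big(\hat{\otimes}_{\rm min}^{i\in\bn}\ga_i\big)''=\underset{\boldsymbol{\xi}}{\hat{\otimes}}\pi_{\om_i}(\ga_i)''$ (correctly handled via density, evenness of the $\om_i$'s guaranteeing the grading unitaries fix the GNS vectors), which the paper leaves implicit in the word ``straightforward''.
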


\begin{proof}
A straightforward application of Remark \ref{prodrep} and Proposition \ref{factor}.
\end{proof}

Actually, far more can be said about the type of factor one can obtain from a GNS representation as above.
In fact, the analysis  conducted in \cite{St} for tensor products carries over  almost
\emph{verbatim} to the graded case. More precisely, we can provide a graded version of Theorem 2.2 in \cite{St}.
We limit ourselves to stating the result  since the proof is exactly the same as the original by St\o rmer.

\begin{prop}\label{typefactor}
If $\om$ is an even factorial state on a $\bz$-graded $C^*$-algebra $(\ga, \theta)$, then
\begin{itemize}
\item [(i)] $\times^\bn \om$ is of type $I_1$ if and only if  $\om$ is mutiplicative;
\item [(ii)] $\times^\bn \om$ is of type $I_\infty$ if and only if $\om$ is pure but is not multiplicative;
\item [(iii)] $\times^\bn \om$ is of type $II_1$ if and only if $\om$ is a trace but is not multiplicative.
\item [(iv)] $\times^\bn \om$ is of type $II_\infty$ if and only if the restriction of the vector state $\varphi_{\xi_\om}$ to
$\pi_\om(\ga)'$ is a trace, and $\om$ is neither pure nor a trace.
\item [(v)] $\times^\bn \om$ is of type $III$ if and only if the restriction of the vector state $\varphi_{\xi_\om}$ to
$\pi_\om(\ga)'$ is not a trace.
\end{itemize}

\end{prop}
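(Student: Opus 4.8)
The plan is to transport the whole statement to the level of von Neumann algebras and then run St\o rmer's argument from \cite[Theorem 2.2]{St} essentially unchanged, the only genuine task being to check that the graded features are harmless because their effect has already been absorbed into the results of this section. By Remark \ref{prodrep} the GNS triple of $\times^\bn\om$ is $(\underset{\boldsymbol{\xi}}{{\otimes}}\,\ch_\om,\ \underset{i\in\bn}{\hat{\otimes}}\,\pi_\om,\ \Xi)$ with $\Xi:=\underset{i\in\bn}{{\otimes}}\,\xi_\om$, so that, writing $\mathcal{M}:=\pi_\om(\ga)''$ and $\car:=\pi_{\times^\bn\om}(\hat{\otimes}_{\rm min}^{\bn}\ga)''=\underset{\boldsymbol{\xi}}{\hat{\otimes}}\,\mathcal{M}$, the type of $\times^\bn\om$ is by definition the type of $\car$. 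Since $\om$ is even and factorial, $\mathcal{M}$ is a factor, and Proposition \ref{factor} then guarantees that $\car$ is a factor as well, whose normal vector state induced by $\Xi$ is the infinite product of copies of $\f_{\xi_\om}$.

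First I would dispose of the three degenerate cases directly. If $\om$ is multiplicative then $\mathcal{M}=\bc$, whence $\car=\bc$ is of type $I_1$, and conversely; this is (i). If $\om$ is pure but not multiplicative then $\mathcal{M}=\cb(\ch_\om)$ with $\dim\ch_\om\geq 2$, and by Proposition \ref{pure} the state $\times^\bn\om$ is again pure, so $\car=\cb(\underset{\boldsymbol{\xi}}{{\otimes}}\,\ch_\om)$ is a type $I$ factor on an infinite-dimensional space, hence of type $I_\infty$; this is (ii). If $\om$ is a trace but not multiplicative then $\mathcal{M}$ is a finite factor different from $\bc$, $\f_{\xi_\om}$ is its tracial state, and its infinite product is a faithful normal trace (faithfulness as in Remark \ref{faithful}) on the infinite-dimensional factor $\car$, which must therefore be of type $II_1$; this is (iii).

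The substance lies in the dichotomy (iv)--(v), where $\om$ is neither pure nor a trace, and here I would invoke Tomita--Takesaki theory through the commutant, where faithfulness is automatic: $\Xi$ is cyclic for $\car$, hence separating for $\car'$, so $\f_\Xi\lceil_{\car'}$ is a faithful normal state. By the twisted commutation Theorem \ref{infinitecomm} one has $\car^\wr=\underset{\boldsymbol{\xi}}{\hat{\otimes}}\,\mathcal{M}^\wr$, which identifies $\car'$ (after undoing the twist) with the infinite graded tensor product of the $\mathcal{M}'$'s carrying the product vector state $\underset{\boldsymbol{\xi}}{{\otimes}}\,(\f_{\xi_\om}\lceil_{\mathcal{M}'})$, exactly as the modular data of $(\car,\Xi)$ are the infinite products of those of $(\mathcal{M},\xi_\om)$. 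The key step is then the equivalence that $\car$ is semifinite if and only if $\car'$ is semifinite if and only if $\f_{\xi_\om}\lceil_{\mathcal{M}'}$ is a trace: if the latter is a trace, its infinite product is a faithful normal trace on $\car'$, so $\car'$ and $\car$ are semifinite, and since $\om$ is neither pure nor a trace the only surviving possibility is type $II_\infty$, giving (iv); if it is not a trace one reproduces St\o rmer's obstruction, showing that $\car$ admits no faithful normal semifinite trace and is thus of type $III$, giving (v).

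The hard part will be precisely this semifiniteness criterion, and it is also the only place where the grading could in principle interfere: the correct commutant is the \emph{twisted} commutant $\car^\wr$ rather than $\car'$, and the two are interchanged by the twist $\eta_U$ associated with $U=\underset{i\in\bn}{{\otimes}}\,U_i$. The point to verify is that, because $\om$ is even (so $U_i\xi_\om=\xi_\om$ and $\Xi$ is an even vector) and each $\f_{\xi_\om}$ is even, the twist $\eta_U$ preserves the vector state and matches the ordinary tracial condition on $\mathcal{M}'$ with the twisted one, so that all the sign factors cancel and St\o rmer's computation of the trace and of the modular automorphism group carries over \emph{verbatim} once ordinary commutants are replaced by twisted ones via Theorem \ref{infinitecomm} and Lemma \ref{Rieffel}. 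Granting this sign bookkeeping, every remaining estimate, including the invariant distinguishing $II_\infty$ from $III$, is identical to the ungraded one, and the classification follows.
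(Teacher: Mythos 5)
Your proposal is correct and follows essentially the same route as the paper, which proves the proposition precisely by observing that St\o rmer's original argument for \cite[Theorem 2.2]{St} carries over verbatim once ordinary commutants are replaced by twisted ones via Theorem \ref{infinitecomm}, with Remark \ref{prodrep}, Proposition \ref{pure} and Proposition \ref{factor} handling the degenerate cases exactly as you do. Your one flagged worry, undoing the twist, is indeed harmless: $\eta_U$ is implemented by the unitary $Z=\frac{1}{1+i}(1+iU)$, which fixes the even product vector $\Xi$, so $\varphi_\Xi\lceil_{\car'}$ is a trace if and only if $\varphi_\Xi\lceil_{\car^\wr}$ is, and the sign bookkeeping cancels as you predicted.
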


%4) & p is of type II, if and only if wzpj ~#!3)’ is a trace, and p
%is neither pure nor a trace.
%5) $J p is of type III if and only if ozp 1TT@)’ is not a trace

\section*{Acknowledgments}
The authors acknowledge Italian INDAM-GNAMPA.

\end{document}